\newtheorem{thm}{Theorem}[section]
\newtheorem{cor}[thm]{Corollary}
\newtheorem{lem}[thm]{Lemma}
\theoremstyle{definition}\newtheorem{rem}[thm]{Remark}
\theoremstyle{definition}\newtheorem{ex}[thm]{Example}
\newcommand{\Aut}{{\rm Aut}}
\newcommand{\GF}{{\rm GF}}
\newcommand{\GAP}{\textbf{{\rm GAP}}}
\newcommand{\Cay}{{\rm Cay}}
\newcommand\id{{\rm id}}
\newcommand{\GR}{{\rm GR}}
\newcommand{\calD}{\mathcal{D}}
\newcommand{\calE}{\mathcal{E}}
\newcommand{\calG}{\mathcal{G}}
\newcommand{\calQ}{\mathcal{Q}}
\newcommand{\calR}{\mathcal{R}}
\newcommand{\calU}{\mathcal{U}}
\renewcommand\le{\leqslant}
\renewcommand\ge{\geqslant}
\newcommand{\F}{\mathbb{F}}
\newcommand{\Z}{\mathbb{Z}}
\newcommand{\tr}{{\rm tr}}
\begin{document}
 
\title[Combinatorial transfer]{Combinatorial transfer: a new method for constructing infinite families of nonabelian difference sets, partial difference sets, and relative difference sets}

\author{Eric Swartz}
\address{Department of Mathematics, William \& Mary, Williamsburg, VA 23187}
\email{easwartz@wm.edu}

\author{James A. Davis}
\address{Department of Mathematics \& Statistics, University of Richmond, VA 23173}
\email{jdavis@richmond.edu}

\author{John Polhill}
\address{Department of Mathematics, Computer Science, and Digital Forensics, Commonwealth University, Bloomsburg, PA 17815}
\email{jpolhill@commonwealthu.edu}

\author{Ken W. Smith}
\address{Huntsville, TX 77340}
\email{kenwsmith54@gmail.com}

\begin{abstract}
 For nearly a century, mathematicians have been developing techniques for constructing abelian automorphism groups of combinatorial objects, and, conversely, constructing combinatorial objects from abelian groups.  While abelian groups are a natural place to start, recent computational evidence strongly indicates that the vast majority of transitive automorphism groups of combinatorial objects are nonabelian. This observation is the guiding motivation for this paper. We propose a new method for constructing nonabelian automorphism groups of combinatorial objects, which could be called the \textit{combinatorial transfer method}, and we demonstrate its power by finding (1) the first infinite families of nonabelian Denniston partial difference sets (including nonabelian Denniston PDSs of odd order), (2) the first infinite family of Spence difference sets in groups with a Sylow 3-subgroup that is non-normal and not elementary abelian, (3) the first infinite families of McFarland difference sets in groups with a Sylow $p$-subgroup that is non-normal and is not elementary abelian, (4) new infinite families of partial difference sets in nonabelian $p$-groups with large exponent, (5) an infinite family of semiregular relative difference sets whose forbidden subgroup is nonabelian, and (6) a converse to Dillon's Dihedral Trick in the PDS setting. We hope this paper will lead to more techniques to explore this largely unexplored topic.
\end{abstract}

\maketitle

\section{Introduction} 

A subset $D$ of a finite group $G$ is called a \textit{$(v, k, \lambda)$-difference set} (DS) if $|G| = v$, $|D| = k$, and, for all $g \in G \backslash \{1\}$, we can express $g$ in exactly $\lambda$ ways as $d_1d_2^{-1}$, where $d_1, d_2 \in D$.  A subset $D$ of $G$ is called a \textit{$(v,k.\lambda, \mu)$-partial difference set} (PDS) if $|G| = v$, $|D| = k$, $g \in D$ can be expressed as $d_1d_2^{-1}$ with $d_1, d_2 \in D$ in exactly $\lambda$ different ways, and $g \in G \backslash (D \cup \{1\})$ can be expressed as $d_1d_2^{-1}$ with $d_1, d_2 \in D$ in exactly $\mu$ different ways. 

Difference sets and partial difference sets have proven to be extremely useful in applications.  Difference sets have found uses, for instance, in coding theory \cite{Xia_etal_2005, Xia_etal_2007} and imaging with coded masks \cite{Skinner_1988}; see \cite{Pott_etal_1999} for the proceedings of a NATO conference dedicated to such applications.  Similarly, partial difference sets have found applications in coding theory; see, for instance, \cite[Section 8]{Ma_1994b} and \cite{Tao_Feng_Li_2021}.  For more information about DSs and PDSs, see \cite{Moore_Pollatsek_2013} and \cite{Ma_1994b}, respectively.

The study of difference sets in (noncyclic) abelian groups was initiated by the seminal works of McFarland \cite{McFarland_1973} and Turyn \cite{Turyn_1965} in the 1960s, and we now have a deep understanding of how to construct difference sets in that context. However, computational evidence indicates that difference sets in nonabelian groups will dwarf the number of difference sets in abelian groups (as noted in \cite{Polhill_etal_2023}, of the 330159 distinct $(64, 28, 12)$-DSs, there are 105269 nonisomorphic symmetric designs, of which only 748 arise from an abelian group), and there has been relatively little done to develop techniques that will work in that context. (Beyond sporadic examples, see \cite{Davis_etal_2024, Dillon_1985, Drisko_1998, McFarland_1973, Spence_1977} for the known examples of DSs in nonabelian groups and \cite{Davis_etal_2023, Feng_He_Chen_2020, Feng_Li_2021, Polhill_etal_2023, Swartz_2015} for the known examples of PDSs in nonabelian groups.)  This paper develops a technique that has so far only been used in an ad hoc way into a more general theory. Our hope is that it will motivate further exploration in this vast largely unexplored area.


A method that has been used for the construction of new (P)DSs, which could be called the \textit{combinatorial transfer method}, is the following:
\begin{itemize}
 \item[(1)] take a known example of a (P)DS $D$ in a group $G$;
 \item[(2)] construct a related combinatorial object $\Gamma$, such as a symmetric design or (possibly directed) Cayley graph;
 \item[(3)] determine the full automorphism group $\Aut(\Gamma)$ of $\Gamma$; and, finally,
 \item[(4)] search $\Aut(\Gamma)$ for subgroups with the appropriate regular action on $\Gamma$.
\end{itemize}
 
The combinatorial transfer method is remarkably effective for known, ``small'' examples, and it has yielded many new examples of (P)DSs (see, for example, \cite{Jorgensen_Klin_2003}).  On the other hand, current implementations of this process have significant drawbacks.  For one thing, the vast majority of known examples of (P)DSs come from a purely algebraic construction.  Once the related combinatorial object $\Gamma$ is constructed, we often completely forget about the algebraic construction of the (P)DS when determining the full automorphism group of $\Gamma$.  While at the very least inefficient, this also makes the method extremely ad hoc, unless the associated combinatorial object $\Gamma$ itself independently has an algebraic structure, such as being related to a bilinear or quadratic form; see, for instance \cite[Section 3]{Davis_etal_2023}, \cite{Feng_He_Chen_2020}, \cite{Feng_Li_2021}, and \cite[Theorem 4.10]{Polhill_etal_2023}.  The success of these recent constructions indicates that we should not be ignoring the algebraic structure that already exists. 
 
The goal here is to take advantage of the power of the combinatorial transfer method to create new, infinite families of examples without ``forgetting'' the original construction of the (P)DS.  The following result is our main method of combinatorial transfer.  The idea is roughly as follows: given a (P)DS $D$ in a group $G$, if $\Aut(G)_D$ is the subgroup of automorphisms of $G$ that fix $D$ setwise, we prove that a new group $\calG \le \Aut(G)_D \ltimes G$ contains a (P)DS with the same parameters as $D$ if it satisfies certain conditions.  Since many (P)DSs are constructed algebraically using (for example) cosets of subgroups or ``hyperplanes'' (when a group is viewed as a vector space), it is often possible to ``arrange'' these coset representatives and hyperplanes in such a way to guarantee the existence of such a group $\calG$.

\begin{thm}
 \label{thm:transfer}
 Let $G$ be a group containing a (P)DS $D$ and let $\Aut(G)_D$ be the subgroup of automorphisms of $G$ that stabilizes $D$ setwise.  Let $\calG \le \Aut(G)_D \ltimes G$ be a subgroup satisfying:
 \begin{itemize}
  \item[(i)] $|\calG| = |G|$,
  \item[(ii)] $1 \times X = (1 \times G) \cap \calG$ is a normal subgroup of $G$ and $\calG$, and 
  \item[(iii)] the action of $\calG$ on $[G:X]$, the set of right cosets of $X$ in $G$, given by
  \[ \left(Xh\right)^{(\phi,g)} \colonequals X^\phi h^\phi g = Xh^\phi g\]
  is transitive on $[G:X]$.
 \end{itemize}
 Then, $\calG$ contains a (P)DS $\calD$ with the same parameters as $D$.
\end{thm}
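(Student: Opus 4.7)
The plan is to show that hypotheses (i)--(iii) force $\calG$ to act regularly on $G$ under the natural action $(\phi,g)\colon h\mapsto h^\phi g$, and then to verify that $\calD:=\{(\phi_d, d) : d\in D\}$ is a (P)DS in $\calG$ with the same parameters by a direct calculation in $\Z[\calG]$; here $(\phi_g,g)$ denotes the unique element of $\calG$ whose second coordinate is $g$, for each $g\in G$.

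For the regular action, the transitivity in (iii) combined with (i) gives $|\calG_X| = |X|$ for the stabilizer of the trivial coset $X \in [G:X]$, while $1 \times X \subseteq \calG_X$ and has the same order, so $\calG_X = 1 \times X$. Any element $(\phi,1) \in \calG$ fixing $1\in G$ also fixes the coset $X$, so it lies in $1 \times X$ and satisfies $\phi = \id$. Together with $|\calG|=|G|$, this yields regularity, so the map $g\mapsto \phi_g$ is well defined; the normality of $1\times X$ in $\calG$ from (ii) then forces each $\phi_g$ to preserve $X$ setwise.

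Next I would compute, for non-identity $\gamma=(\phi_g,g)\in\calG$, the coefficient of $\gamma$ in $\calD\calD^{-1}$, which a short manipulation (writing $\delta_1 = \gamma\delta_2$ and comparing second coordinates) gives as
\[
M(g) := |\{d\in D : g^{\phi_{d}}d \in D\}|.
\]
Partitioning $D$ by its $X$-cosets as $D = \bigsqcup_{c\in G/X}D_c$ and using that $\phi_d$ takes a single value $\phi_c\in\Aut(G)_D$ on each $D_c$, applying $\phi_c^{-1}$ within each summand rewrites the above as $\sum_{c}|D_{\phi_c^{-1}(c)}\cap g^{-1}D|$.

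The main obstacle is showing that the map $\tau\colon G/X \to G/X$, $c\mapsto\phi_c^{-1}(c)$, is a bijection; once this is in hand, the sum reindexes to $|D\cap g^{-1}D|$, which by the (P)DS property of $D$ is exactly $\lambda$ if $g\in D$ and $\mu$ if $g\notin D\cup\{1\}$ (and $\lambda$ for all $g\ne 1$ in the DS case). To prove the bijection, I would observe that inversion in $\calG$, transferred via the bijection $\calG\leftrightarrow G$, is the map $g\mapsto (g^{\phi_g^{-1}})^{-1}$. This map respects $X$-cosets (since each $\phi_g$ preserves $X$) and descends to a bijection on $G/X$, which agrees with $\tau$ after post-composition with the coset-inversion bijection $c\mapsto c^{-1}$. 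A diagonal count shows the coefficient of the identity in $\calD\calD^{-1}$ equals $|D|=k$, completing the verification.
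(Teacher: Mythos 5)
Your proposal is correct, but it takes a genuinely different route from the paper. The paper's proof is graph-theoretic: it embeds $\Aut(G)_D \ltimes G$ into $\Aut(\Cay(G,D))$, uses (ii) and (iii) only to show that $\calG$ is transitive (hence regular, by (i)) on the vertices, and then concludes at once because the defining counts of a (P)DS are (directed) walk counts in the Cayley graph, which do not depend on which regular subgroup is used to coordinatize the vertices; the connection set for $\calG$ is exactly your $\calD$. You instead stay inside the group ring: you prove regularity of $\calG$ on $G$ directly (your stabilizer computation $\calG_X = 1\times X$ is in substance the paper's transitivity argument), and then verify the parameters by hand, reducing the coefficient of $(\phi_g,g)$ in $\calD\calD^{-1}$ to $|D\cap g^{-1}D|$ via the partition of $D$ into $X$-cosets, the constancy of $\phi$ on $X$-cosets, and the bijectivity of $c\mapsto\phi_c^{-1}(c)$ on $G/X$; your proof of that bijectivity, by transporting inversion of $\calG$ through the second-coordinate bijection and post-composing with coset inversion, is the genuinely new ingredient, and the paper never needs such a lemma because the walk-count argument absorbs it. What each buys: the paper's argument is shorter and treats DS and PDS (directed edges, loops) uniformly with no computation, while yours is self-contained and elementary, makes the transfer identity explicit, and adapts readily to the RDS analogue. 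Two small points to tighten in a final write-up: derive $X^\phi=X$ for every $(\phi,g)\in\calG$ from (ii) (conjugate $(1,x)$ by $(\phi,g)$ and use $X\lhd G$) \emph{before} invoking ``fixes the coset $X$'' in the regularity step, since as written that fact appears only afterwards; and record the one-line proof that $\phi$ is constant on $X$-cosets, namely that the elements of $\calG$ with second coordinate in $Xd$ are exactly the coset $(1\times X)(\phi_d,d)$, which you assert but do not justify.
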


We prove a similar result for relative difference sets.  Recall that $R$ is an \textit{$(m, u, k , \lambda)$-relative difference set} (RDS) in $G$ (relative to the subgroup $U \le G$ of order $u$) provided $|R| = k$, $|G| = mu$, and the set 
\[ \{ r_1r_2^{-1} : r_1 \neq r_2 \in R\}\]
contains each element of $G \backslash U$ exactly $\lambda$ times and contains no element of $U$.  The subgroup $U$ is called the \textit{forbidden subgroup} with respect to $R$.  Much like DSs and PDSs, RDSs have been useful in applications, such as the construction of mutually unbiased bases in $\mathbb{C}^n$ \cite{Godsil_Roy_2009}, which have applications to quantum information processing.

\begin{thm}
 \label{thm:RDStransfer}
 Let $G$ be a group containing an RDS $R$ with forbidden subgroup $U$ and let $\Aut(G)_R$ be the subgroup of automorphisms of $G$ that stabilizes $R$ setwise.  Let $\calG \le \Aut(G)_R \ltimes G$ be a subgroup satisfying:
 \begin{itemize}
  \item[(i)] $|\calG| = |G|$,
  \item[(ii)] $1 \times X \colonequals (1 \times G) \cap \calG$ is a normal subgroup of $G$ and $\calG$, and
  \item[(iii)] the action of $\calG$ on $[G:X]$, the set of right cosets of $X$ in $G$, given by
  \[ \left(Xh\right)^{(\phi,g)} \colonequals X^\phi h^\phi g = Xh^\phi g\]
  is transitive on $[G:X]$.
 \end{itemize}
 Then, $\calG$ contains a RDS $\calR$ with the same parameters as $R$ with respect to the forbidden subgroup
 \[ \calU = \{(\phi, g) \in \calG: g \in U\}.\]
\end{thm}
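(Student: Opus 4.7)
The plan is to adapt the proof of Theorem~\ref{thm:transfer} by defining $\calR$ to be the tautological lift
\[\calR := \{(\phi, g) \in \calG : g \in R\},\]
and then verifying the group-ring identity $\calR\calR^{-1} = k \cdot 1_\calG + \lambda(\calG - \calU)$ in $\Z[\calG]$, which is equivalent to the RDS condition.

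First I would unpack the structure of $\calG$ forced by conditions (i)--(iii). Let $H$ be the image of $\calG$ in $\Aut(G)_R$ under $(\phi, g) \mapsto \phi$. Conditions (i) and (ii) force $|H| = |G/X|$, and condition (iii) makes the assignment $\tau : H \to G/X$ defined by $\tau(\phi) = Xg$ for any $(\phi, g) \in \calG$ a well-defined bijection satisfying the cocycle relation $\tau(\phi_1\phi_2) = \tau(\phi_1)^{\phi_2}\tau(\phi_2)$. Fixing a transversal $T$ for $X$ in $G$ and writing $\phi_h \in H$ for the element with $\tau(\phi_h) = Xh$, every element of $\calG$ is uniquely expressible as $(\phi_{c(g)}, g)$ with $c(g) \in T$ representing $Xg$. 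A short group-ring manipulation using $R R^{-1} = k + \lambda(G - U)$ shows that every $\phi \in \Aut(G)_R$ also stabilizes $U$ setwise; hence $\calU$ is a subgroup of $\calG$ of order $|U|$, and $|\calR| = |R| = k$ is immediate.

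The main computation is to determine the coefficient of an arbitrary $(\phi, g) \in \calG$ in $\calR\calR^{-1}$. Expanding
\[(\phi_{c(r_1)}, r_1)(\phi_{c(r_2)}, r_2)^{-1} = \bigl(\phi_{c(r_1)}\phi_{c(r_2)}^{-1},\,(r_1 r_2^{-1})^{\phi_{c(r_2)}^{-1}}\bigr)\]
and using the cocycle to verify that the $\phi$-coordinate is automatic once the $g$-coordinate matches $(\phi, g)$, one finds that this coefficient equals $\sum_{h \in T} |R \cap Xh \cap g^{-\phi_h} R|$. Since $\phi_h$ preserves both $R$ and $X$, applying $\phi_h^{-1}$ to the $h$th summand converts this to $\sum_{h \in T} |R \cap X h^{\phi_h^{-1}} \cap g^{-1}R|$.

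The crux is then to show that $h \mapsto X h^{\phi_h^{-1}}$ is a bijection from $T$ onto $G/X$. Applying the cocycle to the equality $\phi_h \phi_h^{-1} = 1$ yields $\tau(\phi_h^{-1}) = (Xh^{\phi_h^{-1}})^{-1}$, so this map decomposes as inversion on $H$, then $\tau$, then inversion on $G/X$---each a bijection. The sum therefore collapses to $|R \cap g^{-1}R|$, the coefficient of $g$ in $R R^{-1}$, which equals $k$ at $g=1$, equals $0$ for $g \in U \setminus \{1\}$ (using that $\phi_h$ stabilizes $U$, so $g^{\phi_h} \in U$ iff $g \in U$), and equals $\lambda$ otherwise. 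Because $g = 1$ forces $\phi = 1$ and $(\phi, g) \in \calU$ iff $g \in U$, this is precisely the RDS identity for $\calR$. I expect the bijectivity step to be the main technical point: it is the place where the hypothesis that $\calG$ is actually a \emph{group} (encoded in the cocycle satisfied by $\tau$) is essential, since without it the ``diagonal'' sum $\sum_h |R \cap Xh \cap g^{-\phi_h}R|$ would be only a weighted sample rather than the full count $|R \cap g^{-1}R|$.
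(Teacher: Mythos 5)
Your proposal is correct, but it takes a genuinely different route from the paper. The paper proves Theorem \ref{thm:RDStransfer} by the same Cayley-graph transfer used for Theorem \ref{thm:transfer}: conditions (i)--(iii) make $\calG$ a vertex-regular group of automorphisms of $\Cay(G,R)$, so the digraph is re-expressed as $\Cay(\calG,\calR)$ with $\calR=\{(\phi,d)\in\calG: d\in R\}$, the forbidden subgroup is read off as the set of vertices at directed distance $0$ or more than $2$ from the base vertex, and $U^\phi=U$ (uniqueness of the forbidden subgroup) gives that $\calU$ is a subgroup; the RDS identity is then inherited because products $r_1r_2^{-1}$ correspond to directed $2$-walks. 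You instead verify the group-ring identity $\calR\calR^{-1}=k\cdot 1_{\calG}+\lambda(\calG-\calU)$ directly: you extract from (i)--(iii) the bijection $\tau\colon H\to G/X$ with its cocycle relation (equivalently, that each $g\in G$ occurs as second coordinate of exactly one element of $\calG$, with first coordinate constant on cosets of $X$), compute the coefficient of $(\phi,g)$ as $\sum_{h\in T}|R\cap Xh\cap g^{-\phi_h}R|$, and collapse the sum to $|R\cap g^{-1}R|$ via the bijection $h\mapsto Xh^{\phi_h^{-1}}$, which your cocycle identity justifies; all steps check out (the $\phi$-coordinate being ``automatic'' is exactly the unique-second-coordinate property, and $X^{\phi_h}=X$ follows from normality of $1\times X$ in $\calG$ as in the paper). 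What each approach buys: the paper's argument is short once Theorem \ref{thm:transfer} is in hand and makes the transfer philosophy transparent, but it leans on describing $U$ by graph distances; your computation is more explicit and self-contained, shows coefficientwise that $\calR\calR^{-1}$ mirrors $RR^{-1}$ (so the parameters transfer verbatim without any geometric description of the forbidden subgroup), and isolates precisely where the group structure of $\calG$ enters. One cosmetic remark: once the sum has collapsed to $|R\cap g^{-1}R|$, the parenthetical appeal to $g^{\phi_h}\in U\iff g\in U$ is unnecessary, since the value $0$ for $g\in U\setminus\{1\}$ comes straight from the RDS property of $R$ in $G$; you do still need $U^\phi=U$ (as does the paper) to see that $\calU$ is a subgroup.
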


The proofs of Theorems \ref{thm:transfer} and \ref{thm:RDStransfer} are given in Section \ref{sect:transfer}.  We are then able to use Theorems these results to prove the existence of DSs, PDSs, and RDSs in many nonabelian settings, all of which are new (with the exception of some constructed in Sections \ref{sect:pgroups} and \ref{sect:McFarland}), including the following:

\begin{itemize}
 \item[(1)]  We prove a converse to Dillon's Dihedral Trick \cite[Section 4, Theorem]{Dillon_1985} in the PDS setting: whenever there exists a regular PDS (respectively, reversible DS) in an abelian group $G$ with index $2$ subgroup $X$, there exists a regular PDS (respectively, reversible DS) in the generalized dihedral extension of $X$.  In particular, as a corollary, we prove that the existence of a reversible DS in an abelian group $G$ with a subgroup of index $2$ implies the existence of a DS with the same parameters in any abelian group containing $X$ as a subgroup of index $2$ (Section \ref{sect:Dillon}).
 \item[(2)]  We prove that, whenever $C_{p^n} \times C_{p^n}$ contains a regular PDS, the nonabelian group isomorphic to $C_{p^n} \rtimes_{p^{n-1} + 1} C_{p^n}$ contains a regular PDS with the same parameters (Section \ref{sect:pgroups}).
 \item[(3)] We prove the existence of an infinite family of DSs with Spence parameters in groups with a non-normal, nonabelian Sylow $3$-subgroup, the first such family (Section \ref{sect:Spence}).  
 \item[(4)] We prove the existence of two infinite families of nonabelian $2$-groups containing PDSs with Denniston parameters in nonabelian $2$-groups, the first such examples, as well as the first construction of an infinite family of PDSs with Denniston parameters in nonabelian groups of order $p^{3m}$, where $p$ is an odd prime (Section \ref{sect:Denniston}). 
 \item[(5)] We construct new examples of DSs with McFarland parameters in nonabelian groups that have non-normal, nonabelian Sylow $p$-groups, the first such examples (Section \ref{sect:McFarland}).  
 \item[(6)] We construct new examples of RDSs in nonabelian $2$-groups, including examples where the forbidden subgroup itself is nonabelian (Section \ref{sect:RDS}). 
\end{itemize}

Finally, in Section \ref{sect:future}, we end the paper with several questions and open problems.

\section{Proofs of Theorems \ref{thm:transfer} and \ref{thm:RDStransfer}}
\label{sect:transfer}

This section is dedicated to the proofs of Theorem \ref{thm:transfer} and \ref{thm:RDStransfer} (which are quite similar).  

\begin{proof}[Proof of Theorem \ref{thm:transfer}]
We will assume that the group $G$ contains a $(v,k,\lambda)$-DS (respectively, $(v,k,\lambda, \mu)$-PDS) $D$.  We can construct a (possibly directed, possibly containing loops) Cayley graph $\Gamma \colonequals \Cay(G,D)$, and we may view $G$ as a subgroup of $A \colonequals \Aut(\Gamma)$, where $G$ acts regularly on the vertices of $\Gamma$; that is, $|V(\Gamma)| = |G|$, and, for (base vertex) $\alpha \in V(\Gamma)$, $\alpha^G = V(\Gamma)$.  (See, for example, \cite[10.14 Proposition]{Beth_Jungnickel_Lenz_1999} for the equivalence between the existence of a regular PDS $D$ in $G$ and a \textit{strongly regular Cayley graph} $\Cay(G,D)$.)

We define $A_\alpha$ to be the stabilizer of $\alpha$ in $A$; note that $A_\alpha \cap G = \{1\}$.  By the Frattini Argument, we have
\[ A = A_\alpha G;\]
indeed, since $G$ is transitive on $V(\Gamma)$, for any $a \in A$, there exists a unique $g \in G$ such that $\alpha^g = \alpha^a$.  Thus, $\alpha^{ag^{-1}} = \alpha$, meaning $ag^{-1} \in A_\alpha$ and $a \in A_\alpha G$, as desired. 

Since $\alpha$ is our base vertex of $\Gamma$, we may identify the vertices in $V(\Gamma)$ with elements of $G$ in the natural way: if $\beta = \alpha^g$ for $g \in G$, then we identify $\beta$ with $g$.  Using this identification, we may identify $\Aut(G)_D$ naturally with a subgroup of $A_\alpha$: for $\phi \in \Aut(G)_D$, we define \[(\alpha^g)^\phi \colonequals \alpha^{g^\phi}.\]
Indeed, $\alpha^g \sim \alpha^h$ if and only if $gh^{-1} \in D$ (by the definition of our Cayley graph) if and only $(gh^{-1})^\phi = g^\phi \left( h^\phi \right)^{-1}\in D$ (since $\phi \in \Aut(G)_D$) if and only if $\alpha^{g^\phi} \sim \alpha^{h^\phi}$ (by the definition of our Cayley graph), and so defining $(\alpha^g)^\phi \colonequals \alpha^{g^\phi}$ does identify the elements of $\Aut(G)_D$ with elements of $A$.  Furthermore, for any $\phi \in \Aut(G)_D$, since $1_G^\phi = 1_G$, we have
\[ \alpha^\phi = (\alpha^{1_G})^\phi = \alpha^{1_G^\phi} = \alpha^{1_G} = \alpha,\]
and so $\Aut(G)_D$ is in fact identified with a subgroup of $A_\alpha$.  This discussion shows that we may indeed identify $\Aut(G)_D \ltimes G$ as a subgroup of $A$ in a natural way and will henceforth view $\Aut(G)_D \ltimes G \le A$.

Now, let $\calG$ be a subgroup of $\Aut(G)_D \ltimes G$ with $|\calG| = |G|$.  Since $|\calG| = |V(\Gamma)|$, if we can show that $\calG$ is transitive on $V(\Gamma)$, then we can identify $\Gamma$ as a (possibly directed, possibly containing loops) Cayley graph $\Cay(\calG, \calD)$, where
\[ \calD \colonequals \{h \in \calG : \alpha^h \sim \alpha \}.\] 
Note that, since $\alpha^{(\phi,g)} = (\alpha^\phi)^g = \alpha^g$ for all $\phi \in \Aut(G)_D$, $g \in G$, it follows that
\[ \calD = \{(\phi,d) \in \calG : d \in D \}.\]
Since the defining properties of being a DS (respectively, PDS) can be expressed entirely in terms of (directed) walks on the (possibly directed, possibly containing loops) graph $\Gamma$, this implies that $\calD$ is a DS (respectively, PDS) of $\calG$ with the same parameters as $D$.

Let $1 \times X \colonequals (1 \times G) \cap \calG$, and assume $1 \times X$ is normal in both $1 \times G$ and $\calG$.     In particular, for any $(\phi, g) \in \calG$ and any $(1,x) \in 1 \times X$, we have 
\[ (\phi,g)^{-1}(1,x)(\phi, g) = (1, g^{-1}x^\phi g) \in X.\]
Because $X \lhd G$, this implies that $X$ is $\phi$-invariant for any $\phi$ such that $(\phi,g) \in \calG$.  Now, $X \le G$, so $X$ acts semiregularly on $V(\Gamma)$, i.e., $X$ has $|G:X|$ orbits of size $|X|$ on $V(\Gamma)$.  Since $1 \times X \lhd \calG$, $\calG$ has a faithful action on the orbits of $X$ on $V(\Gamma)$: for $\beta \in V(\Gamma)$ and $(\phi,g) \in \calG$,
\[ (\beta^X)^{(\phi,g)} = \beta^{X(\phi, g)} = \left( \beta^{(\phi,g)}\right)^X.\]

Finally, if we assume the natural action of $\calG$ is transitive on $[G:X]$, then (recalling the identification of $G$ with $V(\Gamma)$) $\calG$ will indeed be transitive on $V(\Gamma)$: let $\beta \in V(\Gamma)$.  Then, $\beta \in \alpha^{Xg}$ for some $g \in G$, and, by assumption $X^{(\phi,h)} = Xg$ for some $(\phi,h) \in \calG$, so $\alpha^{(\phi,h)} \in \alpha^{Xg}$.  Since $1 \times X \le \calG$ is transitive on $\alpha^{Xg} = \left(\alpha^g\right)^X$ (since $X$ is normal in both $G$ and $\calG$), there exists an element $(\psi, y) \in \calG$ such that $\alpha^{(\psi, y)} = \beta$, i.e., $\calG$ is transitive on $V(\Gamma)$, as desired.
\end{proof}
 
We now prove Theorem \ref{thm:RDStransfer}; we omit the details that apply mutatis mutandis from Theorem \ref{thm:transfer}.

\begin{proof}[Proof of Theorem \ref{thm:RDStransfer}]
 Let $\Gamma = \Cay(G,R)$.  As in the proof of Theorem \ref{thm:transfer}, if $\calG \le \Aut(G)_R \ltimes G$ has $|\calG| = |G|$ and $\calG$ is transitive on the vertices of $\Gamma$, we can then express $\Gamma$ as $\Cay(\calG, \calR)$, where
 \[ \calR \colonequals \{r \in \calG: \alpha^r \sim \alpha\}\]
 for some base vertex $\alpha \in \Gamma$ identified with $1 \in G$; indeed, proceeding exactly as in the proof of Theorem \ref{thm:transfer}, $\calG$ will indeed be transitive on $V(\Gamma)$. 
 
 Define 
 \[ \calU \colonequals \{w \in \calG: \alpha^w \in \Upsilon\},\]  
 where $\Upsilon \subseteq V(\Gamma)$ is set of vertices whose (directed) distance from $\alpha$ is either zero or more than two.  Note that
 \[ U = \{g \in G: \alpha^g \in \Upsilon\},\]
 so
 \[ \calU = \{(\phi, g) \in \calG: g \in U\}.\]
 Since $|\calG| = |G|$ and  $\calG$ is transitive on $V(\Gamma)$, we have $|\calU| = |U|$, and there is a unique element $(\phi, w) \in \calU$ for each $w \in U$.  Moreover, for each $\phi \in \Aut(G)_R$, since $R$ is an RDS with unique forbidden subgroup $U$, we must have $U^\phi = U$, and, hence, for any $(\phi_1, w_1), (\phi_2, w_2) \in \calU$, we have
 \[ (\phi_1, w_1)(\phi_2, w_2) = (\phi_1\phi_2, w_1^{\phi_2} w_2) \in \calU, \]
 implying that $\calU \le \calG$ since $U$ is a finite subgroup of $G$. Since products $r_1r_2^{-1}$, $r_1 \neq r_2 \in R$, correspond to directed walks of length two starting at $\alpha$, this will imply that $\calR$ is a RDS of $\calG$ with the same parameters as $D$.
\end{proof}

\section{A partial converse to Dillon's Dihedral Trick} 
\label{sect:Dillon}

Part of the inspiration of this paper is Dillon's Dihedral Trick (see \cite[Section 7.3]{Moore_Pollatsek_2013} for an excellent account of this result); we want to be able to determine immediately the existence of a (P)DS in one group based on the existence of one in another group.

\begin{thm}[Dillon's Dihedral Trick]\cite[Section 4, Theorem]{Dillon_1985}
\label{thm:DillonDihedral}
 Let $H$ be an abelian group, and let $G$ be the generalized dihedral extension of $H$; i.e., $G = \langle H, g \rangle$, $g^2 = 1$, and $ghg = h^{-1}$ for all $h \in H$.  If $G$ contains a DS, then so does every abelian group which contains $H$ as a subgroup of index $2$.
\end{thm}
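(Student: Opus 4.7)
The plan is to apply Theorem~\ref{thm:transfer}, taking the normal subgroup $X \colonequals 1 \times H$: since $H$ is the (unique) characteristic index-$2$ subgroup of the generalized dihedral group $G$, this $X$ is normal both in $1 \times G$ and in every subgroup of $\Aut(G)_D \ltimes G$, so condition (ii) is automatic. Since $[G:X] = \{H, Hg\}$ has size two, the transitivity condition (iii) reduces to requiring $\calG$ to contain an element whose $G$-component lies in $Hg$.

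Fix an abelian extension $A = \langle H, a\rangle$ with $a^2 = h_0 \in H$. I would seek an element $\sigma = (\phi, g_0) \in \Aut(G)_D \ltimes G$ with $g_0 = h_1 g \in Hg$, so that $\calG \colonequals \langle X, \sigma\rangle$ is abelian of order $|G|$ and isomorphic to $A$. A direct computation in the semidirect product shows that $\sigma$ centralizes $X$ if and only if $\phi|_H = \iota$ (the inversion map $h \mapsto h^{-1}$); this is forced, since $g_0 \in Hg$ inverts $H$ by conjugation. Writing $\phi(g) = h_2 g$, one finds $\sigma^2 = (1, h_2 h_1^{-2})$, so setting $h_2 \colonequals h_0 h_1^2$ gives $\sigma^2 = (1, h_0)$, and the resulting $\calG$ is then abelian, of order $|G|$, and isomorphic to $A$ via $(1,h) \mapsto h$, $\sigma \mapsto a$. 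All three hypotheses of Theorem~\ref{thm:transfer} being verified, the theorem then produces a DS in $\calG \cong A$ with the same parameters as $D$.

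The main obstacle is exhibiting such a $\phi$ in $\Aut(G)_D$. Writing $D = D_0 \sqcup D_1 g$ with $D_0, D_1 \subseteq H$, the condition $\phi(D) = D$ unpacks (for $\phi|_H = \iota$ and $\phi(g) = h_2 g$) into the two set-equalities $D_0 = D_0^{-1}$ and $D_1 = D_1^{-1} h_2$, neither of which need hold for the originally given $D$. Two levers are available to remedy this: replacing $D$ with a translate $Dh'$ (still a $(v,k,\lambda)$-DS) sends $(D_0, D_1)$ to $(D_0 h', D_1 h'^{-1})$, while a translate by $g h'$ instead swaps the roles to $(D_1 h'^{-1}, D_0 h')$; and the coset representative $h_1 \in H$ is free, so $h_2 = h_0 h_1^2$ can be moved through the entire coset $h_0 H^2$. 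The structural identity $2 D_0 D_1 = \lambda H$ in $\Z[H]$ forced by $D$ being a DS in the dihedral $G$, which tightly constrains how the $H$-part and the $Hg$-part of $D$ interact, should then allow the symmetrizer conditions on $D_0$ and $D_1$ to be met simultaneously for every target $h_0 \in H$, completing the proof.
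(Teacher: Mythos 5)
Your framework is set up correctly as far as it goes: with $X = 1\times H$, the element $\sigma=(\phi,h_1g)$ can only centralize $1\times H$ if $\phi|_H$ is inversion, the computation $\sigma^2=(1,h_2h_1^{-2})$ is right, and if a suitable $\phi\in\Aut(G)_D$ existed then conditions (i)--(iii) of Theorem~\ref{thm:transfer} would hold and $\calG\cong A$ would contain a DS. But the step you flag with ``should then allow'' is the entire theorem, and your two levers (right translates of $D$ and the freedom in $h_1$) provably cannot supply it. Test it on the paper's own Example 3.5: $H=\langle h\rangle\times\langle x\rangle\cong C_4\times C_2$, $D_0=\{1,h,x,h^3\}$, $D_1=\{1,h^2x\}$. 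For any translate $uDv$, the two-element piece is a translate $\{t,h^2xt\}$ of $D_1$ and the four-element piece a translate of $D_0$, whose translation stabilizer in $H$ is trivial; chasing your two set conditions, the only values of $h_2$ that can ever be realized are $t^2\in\{1,h^2\}$ or $h^2xt^2\in\{x,h^2x\}$ --- always elements of order at most $2$. Consequently $\sigma^2=(1,h_2h_1^{-2})$ always has order at most $2$, so every abelian $\calG$ your construction can produce from this $D$ has exponent at most $4$; the legitimate target $A\cong C_8\times C_2$, in which every element outside $H$ squares to $h$ or $h^3$, is unreachable --- even though $C_8\times C_2$ does contain a $(16,6,2)$-DS, exactly as the theorem asserts. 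The identity $2D_0D_1=\lambda H$ cannot close this gap: it only says that each nontrivial character of $H$ annihilates $D_0$ or $D_1$, which is much weaker than the set-theoretic symmetries $D_0=D_0^{-1}$ and $D_1=D_1^{-1}h_2$ that your automorphism requires.

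There is also a structural reason not to expect a proof along these lines: Theorem~\ref{thm:transfer} re-expresses the \emph{same} object $\Cay(G,D)$ with a new regular subgroup, so your route could only ever prove the stronger claim that the development of the given $D$ itself admits $A$ as a regular automorphism group. Dillon's theorem asserts less --- only that $A$ contains \emph{some} DS with these parameters --- and the paper does not prove it but cites Dillon, whose argument is a short group-ring/character computation needing no invariance of $D$: writing $D=D_0\cup D_1g$ gives $D_0D_0^{(-1)}+D_1D_1^{(-1)}=(k-\lambda)+\lambda H$ and $2D_0D_1=\lambda H$ in $\Z[H]$; then for any $a\in A\setminus H$ with $a^2=h_0$, the set $D_0\cup D_1a$ is a DS in $A$, because $D_0D_1^{(-1)}h_0^{-1}+D_1D_0^{(-1)}=\lambda H$ (every nontrivial character kills $D_0$ or $D_1$, and the trivial character gives $2|D_0||D_1|=\lambda|H|$). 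This ``replace $g$ by the new coset representative'' recipe is exactly what the paper's Example 3.5 invokes, and the fact that the resulting DS need not be reversible is a symptom of the same phenomenon that defeats your approach: the extra symmetry your transfer argument needs is genuinely absent in general, not merely hidden.
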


The proof of this result relies on calculations in the group ring $\Z[G]$.  We know that a full converse to Dillon's Dihedral Trick does not hold; for example, $C_8 \times C_2$ contains a $(16,6,2)$-DS but the dihedral group of order $16$ does not.  On the other hand, Theorem \ref{thm:transfer} can be used to prove a converse to Dillon's Dihedral Trick that holds for regular PDSs (as well as \textit{reversible} DSs).

\begin{thm}
 \label{thm:DillonDihedralConverse}
 Let $X$ be an abelian group, and let $G$ be any abelian group containing $X$ as a subgroup of index $2$.  If $G$ contains a regular PDS (respectively, reversible DS) $D$, then so does the generalized dihedral extension $\calG$ of $X$.
\end{thm}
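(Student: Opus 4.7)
The plan is to apply Theorem \ref{thm:transfer}, choosing $\calG$ so that the three hypotheses hold and so that $\calG$ is isomorphic to the generalized dihedral extension of $X$. Since $G$ is abelian, inversion $\phi \colon g \mapsto g^{-1}$ is an automorphism of $G$, and since $D = D^{-1}$ (by the definitions of regular PDS and reversible DS), we have $\phi \in \Aut(G)_D$. Fix any $t \in G \setminus X$; since $[G:X] = 2$, we have $G = X \cup Xt$, and $X \lhd G$ as $G$ is abelian. I would then define
\[ \calG \colonequals \langle\, 1 \times X,\ (\phi, t)\, \rangle \le \Aut(G)_D \ltimes G. \]

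Using the multiplication rule $(\phi_1, g_1)(\phi_2, g_2) = (\phi_1 \phi_2,\ g_1^{\phi_2} g_2)$ forced by the action in hypothesis (iii) of Theorem \ref{thm:transfer}, the two decisive calculations are
\[ (\phi, t)^2 = (\phi^2,\ t^\phi t) = (1,\ t^{-1} t) = (1,1) \]
and
\[ (\phi, t)(1, x)(\phi, t) = (1,\ x^{-1}) \quad \text{for every } x \in X, \]
the second of which uses commutativity of $G$. From these it follows immediately that $\calG$ decomposes as the disjoint union $(1 \times X) \cup (\phi, t)(1 \times X)$ of order $2|X| = |G|$, giving hypothesis (i); that $1 \times X$ is normalized by $(\phi, t)$ with $(1 \times G) \cap \calG = 1 \times X$, giving hypothesis (ii); and that $X^{(\phi, t)} = X^\phi t = Xt$ swaps the two cosets in $[G:X]$, giving hypothesis (iii). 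Theorem \ref{thm:transfer} then produces a (P)DS $\calD \subseteq \calG$ with the same parameters as $D$; moreover, $\calD = \calD^{-1}$ is inherited from $D = D^{-1}$ via the identification $\Cay(\calG, \calD) = \Cay(G, D)$ as undirected Cayley graphs, so $\calD$ is regular (respectively, reversible) as well.

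Finally, setting $\tau \colonequals (\phi, t)$ and identifying $1 \times X$ with $X$, the two calculations above become the relations $\tau^2 = 1$ and $\tau x \tau^{-1} = x^{-1}$ for every $x \in X$, which is exactly the defining presentation of the generalized dihedral extension of $X$. The main obstacle is really just bookkeeping with the semidirect-product convention; the critical insight is that choosing the automorphism component to be inversion makes $(\phi, t)$ have order $2$ regardless of the order of $t$ in $G$, since $t^\phi \cdot t = t^{-1} t = 1$ automatically, and this is precisely what forces the resulting extension to be dihedral rather than some other index-$2$ extension of $X$.
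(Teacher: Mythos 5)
Your proposal is correct and follows essentially the same route as the paper's proof: take $\phi$ to be inversion (which fixes $D$ since $D=D^{-1}$), adjoin $(\phi,t)$ with $t\in G\setminus X$ to $1\times X$, verify $(\phi,t)^2=(1,1)$ and $(\phi,t)(1,x)(\phi,t)=(1,x^{-1})$, and apply Theorem \ref{thm:transfer}. Your added remark that $\calD=\calD^{-1}$, so the transferred set is again regular (respectively, reversible), is a small explicit check the paper leaves implicit, but the argument is the same.
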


\begin{proof}
 Let $G = \langle X, y \rangle$, where $y^2 \in X$.  Let $\phi$ be the inversion automorphism, i.e., $g^{\phi} = g^{-1}$ for all $g \in G$.  Since $D$ is a regular PDS (respectively, reversible DS), $D$ is closed under inversion, and hence $\phi \in \Aut(G)_D$; moreover, $X^\phi = X$.  Define
 \[ \calG \colonequals \langle (1,x), (\phi, y): x \in X \rangle.\]
 Note that \[(\phi,y)^2 = (\phi^2, y^\phi y) = (1, y^{-1}y) = (1,1),\]
 and so $1 \times X$ is an index $2$ subgroup of $\calG$, showing that $|\calG| = |G|$ and $1 \times X$ is normal in $\calG$.  Furthermore, $G = X \cup Xy$, and 
 \[ X^{(\phi, y)} = X^{\phi}y = Xy.\]
 By Theorem \ref{thm:transfer}, $\calG$ contains a PDS (or reversible DS) with the same parameters as $D$.
 
 Finally, it remains to show that $\calG$ is isomorphic to the generalized dihedral extension of $X$.  Indeed, for any $(1,x) \in 1 \times X \cong X$, we have
 \[ (\phi, y)(1,x)(\phi, y) = (\phi^2, y^\phi x^\phi y) = (1, y^{-1}x^{-1}y) = (1, x^{-1}), \]
 proving the result.
\end{proof}

\begin{cor}
 \label{cor:Dilloniff}
 Let $G$ be an abelian group containing a reversible DS $D$, and let $X$ be a subgroup of index $2$ in $G$.  Then, every abelian group containing $X$ as a subgroup of index $2$ contains a DS with the same parameters as $D$.
\end{cor}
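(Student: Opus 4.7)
The plan is to chain together the two previous results: Theorem \ref{thm:DillonDihedralConverse} (the partial converse just proved) and Dillon's Dihedral Trick (Theorem \ref{thm:DillonDihedral}). Starting from a reversible DS $D$ in the abelian group $G$ with index $2$ subgroup $X$, Theorem \ref{thm:DillonDihedralConverse} immediately produces a (reversible) DS with the same parameters inside $\calG$, the generalized dihedral extension of $X$. This gets us from the abelian world into the generalized dihedral world while preserving parameters.

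Next, I would invoke Dillon's Dihedral Trick directly in its stated form. The hypothesis of Theorem \ref{thm:DillonDihedral} is exactly that $X$ is abelian and that the generalized dihedral extension $\calG$ of $X$ contains a DS, and its conclusion is that every abelian group containing $X$ as an index $2$ subgroup contains a DS with those same parameters. The previous step gives us precisely this hypothesis, so the conclusion transfers directly to every abelian group $H$ with $[H : X] = 2$.

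Putting these two steps together yields the corollary. There is no real obstacle here: the work has already been done in proving Theorem \ref{thm:DillonDihedralConverse}, and Dillon's Dihedral Trick is cited as a known result. The only small point worth flagging explicitly in a write-up is that Theorem \ref{thm:DillonDihedralConverse} guarantees a \emph{reversible} DS in $\calG$ (not merely a DS), which is strictly stronger than what Theorem \ref{thm:DillonDihedral} requires as input, so the hypotheses match with room to spare. Thus the corollary follows in essentially two lines.
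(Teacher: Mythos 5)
Your proof is correct and matches the paper's own argument exactly: apply Theorem \ref{thm:DillonDihedralConverse} to obtain a DS with the same parameters in the generalized dihedral extension of $X$, then apply Dillon's Dihedral Trick (Theorem \ref{thm:DillonDihedral}) to transfer it to every abelian group containing $X$ with index $2$. No gaps; the extra observation about reversibility is fine but not needed.
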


\begin{proof}
 Let $G$ be an abelian group containing a reversible DS $D$, and let $X$ be a subgroup of index $2$ in $G$.  By Theorem \ref{thm:DillonDihedralConverse}, the generalized dihedral extension $\calG$ of $X$ contains a DS with the same parameters as $D$.  By Dillon's Dihedral Trick (Theorem \ref{thm:DillonDihedral}), every abelian group containing $X$ as a subgroup of index $2$ contains a DS with the same parameters as $D$.
\end{proof}

\begin{rem}
 The DS obtained from Dillon's Dihedral Trick may not be reversible, even if the DS in the generalized dihedral extension is reversible.
\end{rem}

\begin{ex}
 Let 
 \[ G = \langle a, b, c : a^4 = b^2 = c^2 = [a,b] = [a,c] = [b,c] = 1 \rangle \cong C_4 \times C_4 \times C_2. \]
 Then, the subset 
 \[D = \{1, a, b, c, a^3, a^2 b c\} \]
 is a reversible $(16,6,2)$-DS in $G$.  If $\phi$ is the inversion map on $G$, then
 \[ \calG = \langle (1,a), (1,b), (\phi, c) \rangle \cong C_2 \ltimes (C_4 \times C_2)\]
 is isomorphic to the generalized dihedral extension of $\langle a, b \rangle \cong C_4 \times C_2$, and, by Theorem \ref{thm:DillonDihedralConverse}, $\calG$ contains a $(16,6,2)$-DS; namely, 
 \[ \calD = \{(1,1), (1,a), (1,b), (1, a^3), (\phi, c), (\phi, a^2bc) \}.\]
 (Note that $(1, a^2b)(\phi, c) = (\phi, a^2bc)$.)  To simplify notation, we let $h = (1,a)$, $x = (1,b)$, $g = (\phi, c)$, so
 \[ \calD = \{ 1, h, x, h^3, g, h^2xg\}.\]
 We remark that $\calD$ is actually a reversible DS in $\calG$.  We define $H \colonequals \langle h, x \rangle \cong C_4 \times C_2$ and note that $\calG$ is the generalized dihedral extension of $H$.  
 
 Define 
 \[K \colonequals \langle k, h, x : k^2 = h, h^4 = x^2 = [k,x] = 1 \rangle \cong C_8 \times C_2.\]
 Clearly, $K$ contains $H$ as an index $2$ subgroup.  By the proof of Dillon's Dihedral Trick (\cite[pp. 17--18]{Dillon_1985}), the subset $S$ of $K$ obtained by replacing ``$g$'' with ``$k$'' in $\calD$ will be a $(16, 6, 2)$-DS in $K$; indeed, 
 \[ S \colonequals \{1, h, x, h^3, k, h^2xk \}\]
 is a $(16, 6, 2)$-DS of $K$.  However, $k \in S$ but $k^{-1} \notin S$, so the DS $S$ obtained using Dillon's Dihedral Trick here is not reversible.  This means we cannot use Theorem \ref{thm:DillonDihedralConverse} to obtain a DS in the dihedral group of order $16$.  (In fact, the dihedral group of order $16$ does not contain a $(16,6,2)$-DS.) 
\end{ex}

\section{PDSs in nonabelian $p$-groups}
\label{sect:pgroups}

Throughout this section, let $p$ be a prime, let $n \ge 2$ be a positive integer, and define 
\[ G \colonequals \langle x, y : x^{p^n} = y^{p^n} = [x,y] = 1 \rangle \cong C_{p^n} \times C_{p^n}.\]
These groups contain PDSs related to partial congruence partitions, Paley PDSs, as well as reversible Hadamard difference sets when $p = 2$ (which are also PDSs, since the DSs are reversible).  When $p$ is odd, the regular PDSs in such groups were completely classified in \cite{Leifman_Muzychuk_2005}. When $p$ is even, a great deal is known, but the situation is more complicated; see \cite{Malandro_Smith_2020}.  

Given a PDS $D$ in the group $G$, it is relatively easy to find an automorphism of $G$ preserving $D$ using \textit{multipliers}.  If $m$ is an integer, we define
\[D^{(m)} \colonequals \{ d^m : d \in D \}. \]

\begin{lem}[Ma's Multiplier Theorem]
\label{lem:multiplier}
If $D = D^{(-1)}$ is a PDS in an abelian group $G$ and $m$ is an integer relatively prime to the order of $G$, then $D^{(m)} =D.$
\end{lem}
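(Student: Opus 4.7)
The plan is to prove the lemma via character theory in the group algebra $\Z[G]$. Identifying each subset $S \subseteq G$ with the formal sum $\sum_{s \in S} s \in \Z[G]$, the PDS defining property combined with $D = D^{(-1)}$ is equivalent to the single identity
\[ D^{2} \;=\; (k - \mu)\cdot 1 \;+\; (\lambda - \mu)\,D \;+\; \mu\,G \]
in $\Z[G]$, assuming $1 \notin D$ for concreteness. This compact algebraic form of the PDS condition is what I would feed into characters.

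For each nontrivial character $\chi \colon G \to \C^{\times}$, applying $\chi$ to the identity and using $\chi(G) = 0$ yields
\[ \chi(D)^{2} - (\lambda-\mu)\,\chi(D) - (k-\mu) \;=\; 0, \]
so $\chi(D)$ is a root of a fixed integer quadratic, in particular an algebraic integer of degree at most $2$. The hypothesis $D = D^{(-1)}$ additionally gives $\chi(D) = \overline{\chi(D)}$, so $\chi(D) \in \R$. Setting $n = |G|$ and letting $\sigma_{m} \in \mathrm{Gal}(\Q(\zeta_{n})/\Q)$ denote the automorphism $\zeta_{n} \mapsto \zeta_{n}^{m}$, which is well defined because $\gcd(m,n)=1$, a direct computation that exploits that each $\chi(d)$ is a root of unity in $\Q(\zeta_{n})$ gives
\[ \chi\bigl(D^{(m)}\bigr) \;=\; \sum_{d \in D}\chi(d)^{m} \;=\; \sigma_{m}(\chi(D)). \]
Since subsets of an abelian group are recovered from their character values by Fourier inversion, the lemma reduces to proving that $\sigma_{m}(\chi(D)) = \chi(D)$ for every nontrivial $\chi$.

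The main obstacle is this Galois-fixedness step. The quadratic above has two real roots $\tfrac{1}{2}\bigl((\lambda-\mu)\pm\sqrt{\Delta}\bigr)$ with $\Delta = (\lambda-\mu)^{2}+4(k-\mu) \ge 0$, so a priori $\sigma_{m}$ could send $\chi(D)$ to $(\lambda-\mu)-\chi(D)$ rather than fix it. When $\Delta$ is a perfect square both roots are rational and $\sigma_{m}$ fixes $\chi(D)$ automatically, which handles the Latin-square-type case. Otherwise, I would pass to the real subfield $\Q(\zeta_{n})^{+}$ in which $\chi(D)$ lives, exploit that $m \mapsto \sigma_{m}$ permutes the characters of $G$ via $\chi \mapsto \chi^{m}$, and use the pairing between the two level sets $\{\chi : \chi(D) = r_{1}\}$ and $\{\chi : \chi(D) = r_{2}\}$ to rule out the nontrivial conjugation. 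This is where the PDS hypotheses on $\lambda,\mu,k$ must really be used, and I expect it to be the technical bottleneck of the argument.
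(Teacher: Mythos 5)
The paper gives no proof of this lemma at all --- it is quoted as a known theorem of Ma --- so there is nothing internal to compare against; but the route you chose (the group-ring identity $D^{2}=(k-\mu)\cdot 1+(\lambda-\mu)D+\mu G$, the resulting quadratic satisfied by $\chi(D)$ for nontrivial $\chi$, the identity $\chi\bigl(D^{(m)}\bigr)=\sigma_{m}(\chi(D))$, and Fourier inversion) is exactly the standard published argument, and everything you wrote up to the final step is correct.

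The step you flagged as the bottleneck, however, is a genuine gap, and it cannot be closed in the stated generality, because the lemma as written is false whenever $\Delta=(\lambda-\mu)^{2}+4(k-\mu)$ is not a perfect square. Concretely, let $G=\Z/13\Z$ and let $D$ be the six nonzero quadratic residues, a $(13,6,2,3)$-PDS with $D^{(-1)}=D$ (since $-1$ is a square mod $13$); for $m=2$, which is coprime to $13$ but a non-residue, $D^{(2)}=2D$ is the set of non-residues, so $D^{(m)}\neq D$. Here $\Delta=13$ and $\sigma_{2}$ genuinely interchanges the two eigenvalues $(-1\pm\sqrt{13})/2$, so no pairing of the level sets $\{\chi:\chi(D)=r_{1}\}$ and $\{\chi:\chi(D)=r_{2}\}$ can force Galois-fixedness. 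The correct form of Ma's multiplier theorem carries the hypothesis that $\Delta$ is a perfect square; integrality of the eigenvalue multiplicities forces this except for conference-type parameters, where $\Delta=|G|$, so the hypothesis only excludes Paley-type PDSs in groups of non-square order. Under that hypothesis your argument finishes in one line: $\chi(D)$ is a rational algebraic integer, hence lies in $\Z$ and is fixed by every $\sigma_{m}$, and inversion gives $D^{(m)}=D$. Note that in the one place the paper invokes the lemma (Section 4, where $|G|=p^{2n}$ is a perfect square), $\Delta$ is automatically a perfect square, so the application is safe; the repair is to add this hypothesis, not to look for a cleverer Galois argument. (A minor separate point: the lemma does not assume $1\notin D$, so strictly you should also treat the case $1\in D$, which only changes the constant term of the quadratic to $k-\lambda$.)
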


We assume that $D$ is a regular PDS in $G$, so that $D^{(-1)} = D$.  Define an automorphism $\phi$ of $G$ by
\[ g^\phi \colonequals g^{p^{n-1} + 1}\]
for all $g \in G$.  Since $p^{n-1} + 1$ is coprime to $p$, it is coprime to $|G|$, and, by Ma's Multiplier Theorem, $\phi$ is indeed an automorphism of $G$ fixing $D$.  Moreover, for any $g \in G$,
\[ g^{\phi^p} = g^{(p^{n-1} + 1)^p} = g^{\sum_{i = 0}^p {p \choose i} p^{i(n-1)}} = g,\]
so $\phi$ is an order $p$ automorphism of $G$.

The following result can be considered a generalization of the results in \cite[Section 4]{Davis_etal_2023}; note also that Theorem \ref{thm:transfer} allows for a considerably simpler proof of those results.  

\begin{thm}
 \label{thm:nonabpgroup}
 Let $G$ and $\phi$ be as defined above, suppose that $G$ contains a regular PDS $D$, and define
 \[ \calG \colonequals \langle (1, x), (\phi, y) \rangle.\]
 Then, $\calG$ is a nonabelian group isomorphic to $C_{p^n} \rtimes_{p^{n-1} + 1} C_{p^n}$ that contains a regular PDS with the same parameters as $D$.
\end{thm}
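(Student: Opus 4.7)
The plan is to apply Theorem~\ref{thm:transfer} to $\calG \le \Aut(G)_D \ltimes G$, writing $a := (1,x)$ and $b := (\phi, y)$. A direct calculation using the semidirect-product multiplication gives
\[ b^{-1} a b = (1, x^\phi) = (1, x^{p^{n-1}+1}) = a^{p^{n-1}+1}, \]
which is the defining relation of $C_{p^n} \rtimes_{p^{n-1}+1} C_{p^n}$. By induction one obtains $b^k = (\phi^k, y^{T_k})$, where $T_k := \sum_{i=0}^{k-1}(p^{n-1}+1)^i$; a binomial expansion then shows $T_{p^n} \equiv 0 \pmod{p^n}$, so $b^{p^n} = 1$. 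Together with $a^{p^n}=1$, this yields a surjection $C_{p^n} \rtimes_{p^{n-1}+1} C_{p^n} \twoheadrightarrow \calG$, and hence $|\calG| \le p^{2n}$.

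For the matching lower bound (and to pin down $X$), I would consider the projection $\calG \to \Aut(G)$, whose image is $\langle \phi \rangle \cong C_p$: this gives $|\calG| = p \cdot |X|$, where $X = \{g \in G : (1,g) \in \calG\}$. The kernel contains both $a$ and $b^p = (1, y^{T_p})$, and a short binomial calculation yields $T_p \equiv p \pmod{p^n}$, so $X \supseteq \langle x, y^p \rangle$, which already has order $p^{2n-1}$. Hence $|\calG| = p^{2n}$, the surjection above is an isomorphism (establishing the structural claim $\calG \cong C_{p^n} \rtimes_{p^{n-1}+1} C_{p^n}$), and $X = \langle x, y^p \rangle$ exactly. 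Condition~(i) of Theorem~\ref{thm:transfer} is thereby verified, and condition~(ii) follows immediately: $X \lhd G$ because $G$ is abelian, while $X$ is $\phi$-invariant because $x^\phi = x^{p^{n-1}+1} \in X$ and $(y^p)^\phi = y^{p^n+p} = y^p$, so $1 \times X \lhd \calG$.

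For condition~(iii), the quotient $G/X$ is cyclic of order $p$ with coset representatives $1, y, \ldots, y^{p-1}$, and $b = (\phi,y)$ acts on cosets by $Xy^k \mapsto Xy^{k(p^{n-1}+1)+1} = Xy^{k+1}$, using $y^{kp^{n-1}} \in \langle y^p \rangle \subseteq X$ whenever $n \ge 2$. So $\langle b \rangle$ is already transitive on $[G:X]$. Theorem~\ref{thm:transfer} then produces a PDS $\calD \subseteq \calG$ with the same parameters as $D$, and regularity of $\calD$ is inherited from its explicit form $\calD = \{(\psi, d) \in \calG : d \in D\}$: since $D$ is closed under inversion and under $\Aut(G)_D$, the inverse $(\psi, d)^{-1} = (\psi^{-1}, (d^{-1})^{\psi^{-1}})$ lies in $\calD$.

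The main technical points are the two binomial congruences $T_p \equiv p \pmod{p^n}$ and $T_{p^n} \equiv 0 \pmod{p^n}$, on which both the order of $\calG$ and the semidirect-product structure depend; the multiplier $p^{n-1}+1$ chosen via Ma's Multiplier Theorem is precisely what makes these congruences fall out for $n \ge 2$, simultaneously ensuring that $\phi$ has order $p$, that $b^p$ lies in a subgroup of $1 \times G$ of index $p$ in the kernel of $\pi$, and that the induced action on $[G:X]$ is a $p$-cycle.
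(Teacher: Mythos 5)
Your overall strategy is the same as the paper's: take $X=\langle x,y^p\rangle$, verify conditions (i)--(iii) of Theorem~\ref{thm:transfer}, and read the isomorphism type off the relation $b^{-1}ab=a^{p^{n-1}+1}$; your presentation-plus-counting route to $|\calG|=p^{2n}$ and your explicit check that $\calD$ is closed under inversion (regularity, which the paper leaves implicit) are fine. However, one key step is wrong as stated: the congruence $T_p\equiv p\pmod{p^n}$ fails for $p=2$. Expanding, $T_p\equiv p+p^{n-1}\binom{p}{2}\pmod{p^n}$, and the cross term $p^{n-1}\cdot\tfrac{p(p-1)}{2}$ is divisible by $p^n$ only when $p$ is odd; for $p=2$ one gets $T_2=2^{n-1}+2$, so $b^2=(1,y^{2^{n-1}+2})\neq(1,y^2)$. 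This is exactly why the paper computes $(\phi,y)^p$ separately for odd $p$ and for $p=2$.

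The consequences are uneven. For $p=2$ and $n\ge 3$ the gap is repairable: $\gcd(2^{n-1}+2,2^n)=2$, so $y^{2^{n-1}+2}$ still generates $\langle y^2\rangle$, your lower bound $X\supseteq\langle x,y^2\rangle$ survives, and the remainder of your argument (normality, transitivity on $[G:X]$, the isomorphism) goes through. But for $(p,n)=(2,2)$ it does not: there $\phi$ is inversion on $C_4\times C_4$, so $(\phi,y)^2=(1,y^{-1}y)=(1,1)$, $b$ is an involution, and $\calG=\langle a,b\rangle$ is dihedral of order $8<|G|$; neither $|\calG|=|G|$ nor $\calG\cong C_4\rtimes_3 C_4$ holds for the group as defined, so condition (i) of Theorem~\ref{thm:transfer} cannot be met. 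You need either to exclude this boundary case or treat it separately. (For what it is worth, the paper's own assertion that $|(\phi,y)|=|y|$ when $p=2$ also requires $n\ge 3$, so this case is delicate in the published proof as well; but your blanket claim $T_p\equiv p\pmod{p^n}$ is where your write-up concretely breaks.)
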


\begin{proof}
 First, when $p$ is odd, we note that
 \[(\phi, y)^p  = \left(\phi^p, \prod_{i = 0}^{p-1} y^{(p^{n-1} + 1)^i}\right) = \left(1, y^{\sum_{i=0}^{p-1}\limits (ip^{n-1} + 1)} \right) = \left(1, y^{\frac{p(p-1)}{2}p^{n-1} + p} \right) = (1, y^p),\]
 whereas when $p = 2$ we have 
 \[ (\phi, y)^4 = \left((\phi, y)^2\right)^2 = \left((\phi^2, y^\phi \cdot y)\right)^2 = (1, y^{2^{n-1} + 2})^2 = (1, y^4), \]
 which shows both that $(\phi, y)^p \in 1 \times G$ and $|(\phi, y)| = |y|$.  Moreover, if 
 \[ X \colonequals \langle x, y^p \rangle,\]
 then $\calG \cap (1 \times G) = 1 \times X$, which shows $|\calG| = |G|$, and $1 \times X$ is normal in $\calG$ since it is an index $p$ subgroup of a $p$-group.  Moreover, $x^\phi \in \langle x \rangle \subseteq X$ and 
 \[(y^p)^\phi = \left(y^p\right)^{p^{n-1} + 1} = y^{p^n + p} = y^p \in X,\]
 so $X$ is $\phi$-invariant.  Now, \[G = \bigcup_{i=0}^{p-1} Xy^i,\] and since  
 \[ X^{(\phi, y)} = X^\phi y = Xy\]
 and $y^p \in X$, we proceed by induction and see that
 \[ X^{(\phi,y)^i} = \left(Xy^{i-1} \right)^{(\phi,y)} = X^\phi (y^{i-1})^\phi y = X\left(y^{(p^{n-1} + 1)(i-1)} \right)y = X\left(y^{p^{n-1}}\right)^{i-1}y^i = Xy^i.\]
 Thus, $\calG$ is indeed transitive on the cosets of $X$ in $G$.  By Theorem \ref{thm:transfer}, $\calG$ contains a PDS with the same parameters as $D$.
 
 Finally, we note that 
 \[ (\phi, y)^{-1}(1,x)(\phi, y) = (\phi, y)^{-1}(\phi, x^{p^{n-1} + 1}y) = (\phi, y)^{-1}(\phi, y)(1, x^{p^{n-1} + 1}) = (1, x^{p^{n-1} + 1}),\]
 showing that $\calG$ is nonabelian and $\langle (1,x) \rangle \cong C_{p^n}$ is a normal subgroup of $\calG$, implying that $\calG \cong C_{p^n} \rtimes_{p^{n-1} + 1} C_{p^n}$ and completing the proof. 
\end{proof}

\begin{rem}
\label{rem:mult}
 Theorem \ref{thm:nonabpgroup} provides evidence that multipliers can likely be used in conjunction with Theorem \ref{thm:transfer} to construct many examples of PDSs in nonabelian groups whenever we have a corresponding PDS in an abelian group.
\end{rem}

\section{Nonabelian Spence DSs}
\label{sect:Spence}

\begin{thm}
 \label{thm:spencenonab3}  
 Let $d$ be a positive integer.  There exists a group $\calG$ with a nonabelian, non-normal Sylow $3$-subgroup that contains a DS with Spence parameters; namely, $\calG$ contains a DS $\calD$ with parameters
 \[ \left(\frac{3^{3d}(3^{3d} - 1)}{2}, \frac{3^{3d-1}(3^{3d} + 1)}{2}, \frac{3^{3d-1}(3^{3d-1} + 1)}{2} \right).\]
\end{thm}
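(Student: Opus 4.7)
The plan is to apply Theorem~\ref{thm:transfer} to an abelian Spence difference set using a Frobenius-type order-$3$ automorphism that fixes it setwise. Let $N = \F_{3^{3d}}$ (the additive group, elementary abelian of order $3^{3d}$) and let $H$ be the unique index-$2$ subgroup of $\F_{3^{3d}}^*$, so $|H| = (3^{3d}-1)/2$; set $G \colonequals N \times H$. Spence's construction \cite{Spence_1977} produces a DS $D \subseteq G$ with the stated parameters, obtained by indexing the $(3^{3d}-1)/2$ hyperplanes of $N$ by elements of $H$ via trace duality. Let $\phi$ act on both $N$ and $H$ as $x \mapsto x^{3^d}$, the Frobenius of $\F_{3^{3d}}/\F_{3^d}$; this has order $3$, and taking the hyperplane-to-$H$ indexing to be Frobenius-equivariant (possible since Frobenius acts compatibly on both sides) gives $\phi \in \Aut(G)_D$.

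Next, realize $N$ as $\F_{3^d}^3$ via a normal basis of the extension, so that $\phi|_N$ cyclically permutes the three coordinates. Let
\[
K \colonequals \{(a_1,a_2,a_3) \in \F_{3^d}^3 : \tr_{\F_{3^d}/\F_3}(a_1+a_2+a_3) = 0\},
\]
a $\phi$-invariant index-$3$ subgroup of $N$. Put $X \colonequals K \times H$, pick $\alpha \in \F_{3^d}$ with $\tr(\alpha) \neq 0$, and set $n_0 \colonequals (\alpha,0,0)$, $g_0 \colonequals (n_0, 1_H)$, and $\calG \colonequals \langle 1 \times X,\ (\phi, g_0)\rangle$. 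A direct computation gives $(\phi,g_0)^3 = (1, ((\alpha,\alpha,\alpha), 1_H)) \in 1 \times X$ (since $\tr(3\alpha) = 0$ in characteristic three), so $|\calG| = 3|X| = |G|$. The subgroup $1 \times X = (1 \times G) \cap \calG$ is normal in $G$ (abelian) and in $\calG$ ($\phi$-invariant), and $(\phi,g_0)$ cycles the three cosets of $X$ because $n_0 \notin K$. Theorem~\ref{thm:transfer} then yields a DS $\calD \subseteq \calG$ with Spence parameters.

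Finally, the Sylow $3$-subgroup $P \colonequals \langle 1 \times (K \times \{1_H\}),\ (\phi,g_0)\rangle$ of $\calG$, which has the correct order $3^{3d}$, is both nonabelian and non-normal. Nonabelianness is immediate: for any $k \in K$ with $k^\phi \neq k$, the two products $(\phi,g_0)(1,(k,1_H)) = (\phi,(n_0+k,1_H))$ and $(1,(k,1_H))(\phi,g_0) = (\phi,(k^\phi+n_0,1_H))$ disagree. For non-normality, pick any $h \in H$ with $h^\phi \neq h$ (such $h$ exist for all $d \ge 1$ since $\phi|_H$ is a nontrivial order-$3$ automorphism of $H$); then
\[
(1,(0,h))\,(\phi, g_0)\,(1,(0,h))^{-1} = (\phi,\,(n_0,\, h^\phi h^{-1})),
\]
whose nontrivial $H$-component $h^\phi h^{-1}$ prevents it from being a $3$-element of $\calG$, so $(1,(0,h)) \notin N_\calG(P)$. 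The main obstacle in the plan is the opening step: verifying that Spence's DS admits a Frobenius-equivariant hyperplane indexing. Working directly from the trace-duality description of the hyperplanes of $\F_{3^{3d}}$ should make this transparent, but a little care is needed to match $H$ with the quotient $\F_{3^{3d}}^*/\F_3^*$, particularly when $d$ is even (so that $-1$ is a square and the canonical map from $H$ to this quotient has a nontrivial kernel).
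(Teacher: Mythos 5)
Your overall route is the same as the paper's: take the abelian Spence difference set in $(\F_{3^{3d}},+)\times C_{(3^{3d}-1)/2}$, twist by an order-$3$ Frobenius-type automorphism fixing $D$, and feed a suitable index-$3$ subgroup $X$ and twisting element into Theorem \ref{thm:transfer}; your choice of $X$ (trace-zero hyperplane in normal-basis coordinates) and of $(\phi,g_0)$ is just a concrete variant of the paper's $X=H_0\times\langle w\rangle$ and $(\phi,a_{3d})$, and those verifications are fine. The step you flag as the ``main obstacle'' is genuinely left open, though: a Frobenius-equivariant bijection between the $r=(3^{3d}-1)/2$ hyperplanes and the elements of your cyclic factor exists only because the two order-$3$ actions have the same number of fixed points, namely $(3^d-1)/2$ on each side (Frobenius-fixed hyperplanes correspond to $\F_{3^d}^*$-classes in $\F^*/\F_3^*$; Frobenius-fixed elements of $H$ are $H\cap\F_{3^d}^*$), and you also need the complemented hyperplane to be a fixed hyperplane carrying a fixed index. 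You assert compatibility but verify none of this; the paper sidesteps the issue by indexing hyperplanes through a Singer cycle, $H_i=H_0\theta^i$ with $H_0$ Frobenius-fixed and $k_i=w^i$, so that $F^d$ acts on both index sets as multiplication by $3^d$ modulo $r$ and equivariance is immediate. Either argument closes the gap, but as written the step is incomplete.

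There is also a genuine error in your non-normality argument, although the conclusion survives. You claim the conjugate $(1,(0,h))\,(\phi,g_0)\,(1,(0,h))^{-1}=(\phi,(n_0,h^\phi h^{-1}))$ is ``not a $3$-element'': it certainly is one, since conjugation preserves orders (indeed its cube is $(1,((\alpha,\alpha,\alpha),1_H))$ because the $H$-component telescopes, $c^{\phi^2}c^{\phi}c=1$ for $c=h^\phi h^{-1}$, so it has order $9$). The correct reason it lies outside $P$ is structural: every generator of $P$ has trivial $H$-component and this property is preserved under products and inverses, so all of $P$ sits inside $\langle\phi\rangle\ltimes(N\times\{1_H\})$, whereas the conjugate has $H$-component $h^\phi h^{-1}\neq 1_H$. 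With that one-line repair your non-normality claim holds, exactly as in the paper's computation $(1,w)(\phi,a_{3d})(1,w)^{-1}=(\phi,a_{3d}w^{3^d-1})\notin P_3$.
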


\begin{proof}
 Let $r \colonequals (3^{3d} - 1)/2.$  We begin with the abelian group
 \[ G \colonequals \langle a_1, \dots, a_{3d}, w : a_i^3 = w^r =[a_i, a_j] = [a_i,w] = 1\rangle \cong C_3^{3d} \times C_{r}. \]
 Note that, if $\varphi$ denotes the Euler phi function, we have $\varphi(r)$ is divisible by $3$; indeed, there is an automorphism $\rho$ of $\langle w \rangle \cong C_r$ given by $\rho: w \mapsto w^{3^d}$.  Moreover, if 
 \[ H \colonequals \langle a_1, \dots, a_{3d} \rangle \cong C_3^{3d},\]
 then we may identify $H$ with the additive group of the finite field $\F = \GF(3^{3d})$, and there exists a Singer cycle $\sigma$ of $\F$ of order $2r = 3^{3d} - 1$, corresponding to multiplication by a primitive element $\theta$ of the field, fixing the identity and cyclically permuting the units.  Picking out a distinguished hyperplane (maximal subgroup) $H_0$ of $H$, we may choose the indices of the other hyperplanes in terms of the action of $\sigma$; that is, for $1 \le i \le r - 1$, we may define
 \[ H_i \colonequals H_0^{\sigma^i} = H_0 \cdot \theta^i = \{ h\cdot \theta^i : h \in H_0\}.\]
 Indeed, $\sigma^{r}$ acts as the additive inversion map on $\F$ (multiplication by $-1$), and so $\sigma^r$ fixes each hyperplane setwise.  Noting that the Frobenius automorphism $F: x \mapsto x^3$ is an automorphism of $\F$ of order $3d$, the automorphism $F^d: x \mapsto x^{3^d}$ is an automorphism of order $3$ of $\F$.  Since $3$ is coprime to $r$, $F^d$ must fix at least one hyperplane; indeed, without loss of generality we will assume that $F^d$ fixes the hyperplane
 \[ H_0 \colonequals \langle a_1, \dots, a_{3d - 1} \rangle.\]
 Thus, we have
 \[ H_i^{F^d} = \left(H_0 \cdot \theta^i \right)^{F^d} = H_0^{F^d} \cdot \left(\theta^i\right)^{F^d} = H_0 \left(\theta^{ 3^d}\right)^i = H_{i \cdot 3^d},\]
 where the calculation of the subscript $i\cdot 3^d$ is done modulo $r$.  (Indeed, $F^{d}$ is acting on $\F^*/\langle -1 \rangle \cong C_r$ exactly as $\rho$ is acting on $\langle w \rangle$.)
 
 By \cite[Theorem 1]{Spence_1977}, if we denote the $r$ elements of $\langle w \rangle$ by $k_0, \dots, k_{r-1}$ (in any order), then the set
 \[D \colonequals H_0^c k_0 + \sum_{j = 1}^{r - 1} H_j k_j \]
 is a difference set (here, $H_0^c$ denotes the set-theoretic complement).  Here, we will specifically choose $k_i \colonequals w^i$ for each $i$, i.e., we will consider
 \[ D = H_0^c + \sum_{j = 1}^{r - 1} H_j w^j\]
 
 By the above discussion, we may define an automorphism $\phi$ of $G$, where, if $g = hw^i$ for $h \in H$, then
 \[ g^\phi \colonequals h^{F^d}(w^i)^\rho.\]
 Thus, 
 \begin{align*} 
    D^\phi &= \left( H_0^c  + \sum_{j = 1}^{r - 1} H_j w^j\right)^\phi\\
             &= H_0^c + \sum_{j = 1}^{r - 1} H_{j \cdot 3^d} w^{j \cdot 3^d} \\
             &= D,\\
 \end{align*}
 and so $\phi \in \Aut(G)_{D}.$
 
 Define $\calG \le \Aut(G)_{D} \ltimes G$ by 
 \[ \calG \colonequals \langle (1,a_1), \dots, (1,a_{3d - 1}), (\phi, a_{3d}), (1,w) \rangle.\] 
 Note that, if 
 \[ X \colonequals \langle a_1, \dots, a_{3d - 1}, w \rangle = \langle H, w \rangle,\]
 then $\calG \cap (1 \times G) = 1 \times X$ and $|G:X| = 3$.  Moreover, 
 \[ (\phi,a_{3d})^{-1} (1 \times X) (\phi, a_{3d}) = 1 \times a_{3d}^{-1} X^{\phi} a_{3d} =  1 \times X,\]
 and so $1 \times X$ is a normal subgroup of both $1 \times G$ and $\calG$.
 
 Next, examining the action of $(\phi, a_{3d})$ on 
 \[[G:X] = \left\{X, Xa_{3d}, Xa_{3d}^2\right\},\] 
 we see that 
 \[X^{(\phi, a_{3d})} = X^\phi a_{3d} = Xa_{3d};\]
 since the orbit of $\calG$ containing $X$ has size $1$ or size $3$, we see that $\calG$ is in fact transitive on $[G:X]$ under this action.
 
 We note that $H_0^\phi = H_0$ and $a_{3d} \notin H_0$, so $a_{3d}^\phi = ha_{3d}^i$ for some $h \in H_0$ and $i = 1, 2$.  Moreover, $\phi$ has order $3$, so we must have $i = 1$, i.e., $a_{3d}^\phi = h a_{3d}$ for some $h \in H_0$.  With this in mind, we have
 \[ (\phi, a_{3d})^3 = (\phi^3, (a_{3d})^{\phi^2} (a_{3d})^\phi a_{3d}) = (1, h^\phi h a_{3d}^3) = (1, h^\phi h) \in 1 \times H_0 \le 1 \times X,\]
 and so the order of $(\phi, a_{3d})$ is $9$, and $|\calG:(1 \times X)| = 3$.  Therefore, $\calG$ satisfies the criteria of Theorem \ref{thm:transfer}, and so $\calG$ contains a DS with Spence parameters.  
 
 To finish the proof, we will show that $\calG$ contains a nonabelian, non-normal Sylow $3$-subgroup.  First, by order considerations, we see that 
 \[ P_3 \colonequals \langle H_0, (\phi, a_{3d}) \rangle \]
 is a Sylow $3$-subgroup of $\calG$.  To see that $P_3$ is nonabelian, we first take $h \in H_0$ such that $h^\phi = h^{F^d} = h^{3^d} \neq h$; such an $h$ exists because $F^d$ fixes exactly $3^d$ elements of $H$ and $|H| = 3^{3d - 1}.$  Then,
 \[ (\phi, a_{3d})(1, h) = (\phi, ha_{3d}),\]
 whereas
 \[ (1, h)(\phi, a_{3d}) = (\phi, h^\phi a_{3d}),\]
 and we see that $P_3$ is nonabelian.  Finally, we note that
 \[ (1, w)(\phi, a_{3d})(1,w)^{-1} = (\phi, w^\rho a_{3d} w^{-1}) = (\phi, a_{3d} w^{3^d - 1}) \notin P_3,\]
 and so $\calG$ indeed has a nonabelian, non-normal Sylow $3$-subgroup, as desired.
 \end{proof}

\begin{rem} 
When $d = 1$ in Theorem \ref{thm:spencenonab3}, we obtain the nonabelian Spence DS first noted in \cite[Example 4.1]{Davis_etal_2024}, as we see in the following example. 
\end{rem}

 \begin{ex}
  We consider the concrete example when $d = 1$ and $r = 13$.  In this instance, $\F = \GF(27) = \GF(3)(\theta)$, where $\theta^3 = \theta + 2$.  The map $\sigma: \F \to \F$ defined by $x^\sigma = x \cdot \theta$ is an additive automorphism.    If we consider $\F$ as a $3$-dimensional vector space over $\GF(3)$, then the hyperplanes are the $2$-dimensional subspaces of $\F$.  We define
  \[ H_0 \colonequals \langle 1, \theta \rangle = \{a + b \theta : a, b \in \GF(3) \}.\]
  With this ``base hyperplane,'' we then define the other twelve hyperplanes by 
  \[ H_i \colonequals H_0^{\sigma^i} = H_0 \cdot \theta^i = \{ h \cdot \theta^i : h \in H_0\}.\]
  
  We may further consider the Frobenius mapping $F: \F \to \F$ defined by $x^F = x^3$.  Since $x^{F^3} = x^{27} = x$ for all $x \in \F$, we have that $F^1 = F$ is an element of order $3$ in the automorphism group of $\F$.  We observe that
  \[H_0^F = \langle 1^F, \theta^F \rangle = \langle 1, \theta^3 \rangle = \langle 1, \theta + 2 \rangle = H_0,\]
  and the other twelve hyperplanes are permuted using the permutation \[\pi \colonequals (1 \; 3 \; 9)(2 \; 6 \; 5)(4 \; 12 \; 10)(7 \; 8 \; 11)\]
  on the indices, i.e., $H_i^F = H_{i^\pi}.$  Noting that $\theta^{13} = -1$, we can further think of the Frobenius map $F$ restricted to the quotient group $K \colonequals \F^*/\langle -1 \rangle = \langle \overline{\theta} \rangle \cong C_{13}$; here, $F$ will also act on $K$ via the permutation $\pi$ in the sense that $\left(\overline{\theta}^i\right)^F = \overline{\theta}^{i^\pi}.$
  
  Putting this all together, the Frobenius map $F$ will act on \[G \colonequals \F^+ \times \F^*/\langle -1 \rangle \cong H \times K\] in such a way that the set
  \[ D \colonequals (H_0^c, \overline{1}) \cup \bigcup_{i = 1}^{12} (H_i, \overline{\theta}^i)\]
  is fixed under the action of $F$.  Now, $D$ is a Spence DS that is invariant under the action of $\langle F \rangle \ltimes G$, and, if we define 
  \[ X \colonequals \langle (h,k): h \in H_0, k \in  K \rangle,\]
  consider 
  \[ \calG \colonequals \langle \id_\F \times X, (F, (\theta^2, \overline{1})) \rangle.\]
  Now, 
  \begin{align*} 
  (F, (\theta^2, \overline{1}))^3 &= (F^3, (\theta^2, \overline{1})^{F^2}(\theta^2, \overline{1})^F (\theta^2, \overline{1}))\\ 
   &= (\id_\F, ( (\theta^2 + 2\theta + 1) + (\theta^2 + \theta + 1) + \theta^2, \overline{1}) )\\
   &= (\id_\F, (2, \overline{1})) \in \id_\F \times X,
  \end{align*}
  so $(F, (\theta^2, \overline{1}))$ has order $9$, and, since $(F, (\theta^2, \overline{1}))$ normalizes $\id_\F \times X$, \[|\calG| = |G| = 27 \cdot 13 = 351.\]
  In fact, $\calG \cong C_{13} \rtimes (C_9 \rtimes C_3)$, which can be identified as SmallGroup(351,7) in $\GAP$ \cite{GAP4}.  Since \[[G:X] = \{X, X(\theta^2, \overline{1}), X(2\theta^2, \overline{1}) \},\]
  and
  \[ X^{(F, (\theta^2, \overline{1}))} = X(\theta^2, \overline{1}),\]
  we see that $\calG$ indeed acts transitively on the vertices of the directed Cayley graph $\Cay(G, D)$.  Thus, $\calG$ also contains a DS with Spence parameters. 
\end{ex}

\section{Nonabelian Denniston PDSs}
\label{sect:Denniston}

This section is dedicated to the construction of Denniston PDSs in nonabelian groups.

\subsection{A nonabelian $2$-group containing a Denniston PDS whenever the elementary abelian group of the same order does}
\label{subsect:evenDennisfromelab}

In \cite{Denniston_1969}, Denniston proved an existence result concerning maximal arcs in projective spaces, which can be rephrased in terms of PDSs as follows (see \cite[Theorems 8.1, 9.1, 9.3, Example 9.4]{Ma_1994b}):

\begin{thm}
 \label{thm:originalDenniston}
 Let $Q: \GF(2^m)^2 \to \GF(2^m)$ be a quadratic form such that $Q(x) = 0$ if and only if $x = 0$, and let $K$ be a subgroup of the additive group of $\GF(2^m)$ with $2^r$ elements, where $1 \le r < m$.  Then 
 \[D \colonequals \{ (c, ca, cb): c \in \GF(2^m)^\times, Q(a,b) \in K \}\]
 is a PDS in the additive group $G = (\GF(2^m)^3, +)$ with parameters
 \begin{align*}
   v       &= 2^{3m}\\
   k       &= (2^{m+r} - 2^m + 2^r)(2^m - 1),\\
   \lambda &= 2^m - 2^r + (2^{m+r} - 2^m + 2^r)(2^r - 2),\\
   \mu     &= (2^{m+r} - 2^m + 2^r)(2^r - 1).
 \end{align*}
\end{thm}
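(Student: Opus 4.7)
The plan is to verify the PDS property via the character-sum criterion. Since $G = (\GF(2^m)^3,+)$ is elementary abelian of exponent $2$, its characters are $\chi_{\vec u}(\vec v) = (-1)^{\Tr(\vec u \cdot \vec v)}$ for $\vec u \in \GF(2^m)^3$, where $\Tr = \Tr_{\GF(2^m)/\GF(2)}$. Delsarte's criterion asserts that a $k$-subset $D \subseteq G$ is a $(v,k,\lambda,\mu)$-PDS exactly when, for every nontrivial $\vec u$, $\widehat{D}(\chi_{\vec u}) := \sum_{d \in D}\chi_{\vec u}(d)$ takes one of the two integer values determined by the quadratic $x^2 - (\lambda-\mu)x - (k-\mu) = 0$. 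The values of $v$ and $k$ follow immediately by counting, using that an anisotropic $Q$ on $\GF(2^m)^2$ is a norm form $\GF(2^{2m})\to\GF(2^m)$ and hence has fibers of size $2^m+1$ over each nonzero value.

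Next, I would reduce the character sum to a line-count. Substituting the parametrization of $D$ and summing first over $c \in \GF(2^m)^\times$ gives
\[
\widehat{D}(\chi_{(u_0,u_1,u_2)}) = 2^m\, N_{\vec u} - N,
\]
where $\calA := \{(a,b) \in \GF(2^m)^2 : Q(a,b) \in K\}$, $N = |\calA| = k/(2^m-1)$, and $N_{\vec u}$ counts those $(a,b) \in \calA$ satisfying $u_0 + u_1 a + u_2 b = 0$. When $(u_1,u_2)=(0,0)$ the condition picks out all of $\calA$ or none of it and is easy to evaluate; otherwise $N_{\vec u} = |\calA \cap \ell|$ for some affine line $\ell \subseteq \GF(2^m)^2$. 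So everything reduces to controlling how $\calA$ meets affine lines.

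The crux is Denniston's maximal-arc property: \emph{every affine line $\ell$ meets $\calA$ in either $0$ or $2^r$ points.} I would parametrize $\ell = \{w + tv : t \in \GF(2^m)\}$ with $v \ne 0$, so that $Q(w+tv) = Q(v)t^2 + B(v,w)t + Q(w)$ where $B(x,y) := Q(x+y)+Q(x)+Q(y)$ is the (symplectic) polar form of $Q$. Anisotropy gives $Q(v) \ne 0$, and the map $\phi : t \mapsto Q(v)t^2 + B(v,w)t$ is $\GF(2)$-linear. If $B(v,w) = 0$, then $\phi$ is bijective (Frobenius composed with scaling), so $|\ell \cap \calA| = |\phi^{-1}(K + Q(w))| = |K| = 2^r$. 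If $B(v,w) \ne 0$, then $\ker \phi$ has size $2$ and the substitution $t = B(v,w)u/Q(v)$ identifies $\mathrm{im}(\phi)$ with $(B(v,w)^2/Q(v))\cdot\ker\Tr$, a $\GF(2)$-hyperplane of $\GF(2^m)$; a case analysis based on whether $K \subseteq \mathrm{im}(\phi)$ then yields $|\phi^{-1}(K + Q(w))| \in \{0, 2^r\}$.

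The main obstacle is ruling out, in the last case, the pathological intersection of size $2^{r+1}$ that would occur if simultaneously $K \subseteq \mathrm{im}(\phi)$ and $Q(w) \in \mathrm{im}(\phi)$. Denniston's original argument sidesteps this by exhibiting a pencil of conics with a common nucleus that foliates the plane consistently with $K$; alternatively, one completes the $\GF(2)$-subspace calculation above by a careful accounting of how $K$ intersects $\mathrm{im}(\phi)$ and how $Q(w)$ is constrained by $v,w$, using the specific algebraic form of $Q$ (that $Q$ is the norm form of a quadratic extension) to force the required alignment. Once the maximal-arc property is in hand, $N_{\vec u}$ takes exactly two values, hence $\widehat{D}(\chi_{\vec u})$ takes exactly two integer values; a routine arithmetic verification shows these are the roots of $x^2 - (\lambda-\mu)x - (k-\mu) = 0$ for the stated parameters, completing the proof.
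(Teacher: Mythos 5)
You should note at the outset that the paper does not prove this statement at all: it is Denniston's 1969 theorem, quoted via Ma's survey, so your proposal has to stand on its own. Your framework is the right one and is essentially the classical route. The character-sum reduction is correct: for $(u_1,u_2)\neq(0,0)$ one gets $\widehat{D}(\chi_{\vec u}) = 2^m|\calA\cap\ell| - N$ with $N = 2^{m+r}-2^m+2^r$, the case $(u_1,u_2)=(0,0)$, $u_0\neq 0$ gives $-N$, the count of $k$ via the norm-form fibers is right, and once every affine line meets $\calA$ in $0$ or $2^r$ points the two values $2^m-2^r$ and $-(2^{m+r}-2^m+2^r)$ are indeed the roots of $x^2-(\lambda-\mu)x-(k-\mu)=0$ for the stated parameters (both $\lambda-\mu$ and $k-\mu$ check out).

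The gap is that you stop exactly at the decisive point. In the case $B(v,w)\neq 0$ you correctly reduce everything to whether $K\subseteq\mathrm{im}(\phi)$ and $Q(w)\in\mathrm{im}(\phi)$ can hold simultaneously, but you only gesture at "Denniston's pencil of conics" or an unspecified alignment forced by the norm form; as written, a line meeting $\calA$ in $2^{r+1}$ points is not excluded, and excluding it is the entire content of the theorem. The missing step is short and needs nothing beyond anisotropy. If $B(v,w)\neq 0$ then $w\notin\langle v\rangle$, since $B(v,sv)=sB(v,v)=0$; hence anisotropy gives $Q(w+tv)\neq 0$ for every $t$, i.e.\ the quadratic $Q(v)t^2+B(v,w)t+Q(w)$ has no root in $\GF(2^m)$. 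In characteristic $2$, with $B(v,w)\neq 0$, having no root is equivalent to
\[
\Tr\!\left(\frac{Q(v)\,Q(w)}{B(v,w)^2}\right)=1,
\]
which says precisely that $Q(w)\notin\mathrm{im}(\phi)=(B(v,w)^2/Q(v))\ker\Tr$. So the pathological configuration never occurs: if $K\subseteq\mathrm{im}(\phi)$, the coset $K+Q(w)$ is disjoint from $\mathrm{im}(\phi)$ and the line misses $\calA$ entirely (count $0$); if $K\not\subseteq\mathrm{im}(\phi)$, the coset meets $\mathrm{im}(\phi)$ in $2^{r-1}$ elements and the $2$-to-$1$ map $\phi$ gives $2^r$ points. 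Inserting this observation closes your argument; neither the pencil-of-conics machinery nor the specific norm-form shape of $Q$ is needed beyond the hypothesis that $Q$ vanishes only at $0$.
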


By \cite[Theorem 3.28]{Ball_2015}, if $x = (x_1, x_2)$, we may choose $Q(x) = x_1x_2$ or $Q(x) = x_1^2 + \alpha x_1x_2 + x_2^2$, where $\alpha \in \GF(2^m)$ satisfies $\tr(\alpha^{-1}) = 1$ ($\tr$ is the standard field trace as defined in Section \ref{sect:Denniston}).  Since we want $Q(x) = 0$ if and only if $x = 0$, our form must be equivalent to $Q(x) = x_1^2 + \alpha x_1x_2 + x_2^2$, and in this case the matrix 
\[ M \colonequals \begin{pmatrix}
                   1 & 0 & 0 \\
                   0 & 0 & 1 \\
                   0 & 1 & 0 \\
                  \end{pmatrix}
\]
is an automorphism of $G$ that fixes $D$ setwise since switching the second and third coordinates stabilizes $Q$.  

With this setup, we can prove the existence of a nonabelian group containing Denniston PDSs. 

\begin{thm}
 \label{thm:nonabelianDennis2} 
 Let $m \ge 2$ be a positive integer.  There exists a nonabelian group $\calG$ that contains a PDS with Denniston parameters
 \begin{align*}
   v       &= 2^{3m}\\
   k       &= (2^{m+r} - 2^m + 2^r)(2^m - 1),\\
   \lambda &= 2^m - 2^r + (2^{m+r} - 2^m + 2^r)(2^r - 2),\\
   \mu     &= (2^{m+r} - 2^m + 2^r)(2^r - 1).
 \end{align*}
 for each positive integer $r$, $1 \le r < m$.
\end{thm}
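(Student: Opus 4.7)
The plan is to apply Theorem \ref{thm:transfer}, using the coordinate-swap map as the key automorphism of $D$. Because the quadratic form $Q(a,b) = a^2 + \alpha ab + b^2$ is symmetric in its arguments, the map $M \colon (c,a,b) \mapsto (c,b,a)$ belongs to $\Aut(G)_D$ for every admissible choice of $K$, and $M$ has order $2$.

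First, I would identify the $M$-fixed subspace
\[F \colonequals \{(c,a,a) : c, a \in \GF(2^m)\},\]
which has size $2^{2m}$, and observe that $M$ acts trivially on the quotient $G/F$; indeed, $v + v^M \in F$ for every $v \in G$. Consequently, every $\F_2$-hyperplane $X$ of $G$ containing $F$ is automatically $M$-invariant, and since $m \ge 2$ we have $|F| = 2^{2m} < 2^{3m-1}$, so such hyperplanes exist. I would fix any one such $X$ (for concreteness, one could take $X = \{(c,a,b) : \tr(a+b) = 0\}$ after adjusting when $\tr(1) = 0$), together with any $g \in G \setminus X$.

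Next I would set $\calG \colonequals \langle 1 \times X, (M, g) \rangle \le \Aut(G)_D \ltimes G$. Because $g + g^M \in F \subseteq X$, we get $(M,g)^2 = (1, g + g^M) \in 1 \times X$, so $1 \times X$ has index $2$ in $\calG$ and $|\calG| = 2|X| = |G|$, giving condition (i) and also showing $(1 \times G) \cap \calG = 1 \times X$. The normality condition (ii) follows from $X^M = X$ together with $X \le G$ abelian, and condition (iii) is immediate since $X^{(M,g)} = X^M + g = X + g$, so the action on $[G:X] = \{X, X+g\}$ is transitive. Theorem \ref{thm:transfer} then produces, for each $r$ with $1 \le r < m$, a PDS $\calD \subseteq \calG$ with the Denniston parameters corresponding to $K$.

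Finally I would check that $\calG$ is nonabelian. A short direct computation in the semidirect product gives $(M,g)(1,x)(M,g)^{-1} = (1, x^M)$, which equals $(1,x)$ exactly when $x \in F$. Since $|X| = 2^{3m-1} > 2^{2m} = |F|$ for $m \ge 2$, the hyperplane $X$ strictly contains $F$, so there exists $x \in X$ with $x^M \ne x$, and $\calG$ is nonabelian. I do not expect a serious obstacle in this argument; the only substantive observation is that the built-in symmetry of $Q$ supplies an order-$2$ element of $\Aut(G)_D$ whose fixed subspace is small enough to lie inside an $M$-invariant $\F_2$-hyperplane, at which point the three hypotheses of Theorem \ref{thm:transfer} hold essentially by construction.
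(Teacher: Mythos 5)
Your proposal is correct and follows essentially the same route as the paper: use the symmetry of $Q$ to obtain the coordinate-swap $M \in \Aut(G)_D$, choose an $M$-invariant $\F_2$-hyperplane $X$, and apply Theorem \ref{thm:transfer} to $\calG = \langle 1 \times X, (M,g) \rangle$ with $g \notin X$. The only (cosmetic) difference is that the paper fixes $u = (0,1,1)$ and a complementary hyperplane and checks nonabelianness by an explicit computation, whereas you take any hyperplane containing the fixed space of $M$ — which makes the $M$-invariance automatic — and get nonabelianness from the dimension count $2m < 3m-1$.
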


\begin{proof}
 Let $G$, $D$, and $M$ be as defined above.  Define $u \colonequals (0,1,1) \in G$, and let $X$ be the complement hyperplane to $\langle u \rangle$ in $G$ (as a $\GF(2)$-vector space), such that
 \[ G = X \oplus \langle u \rangle.\]
 Since $M$ stabilizes $u$, $M$ stabilizes $X$ setwise.  Let $x_1, \dots x_{3m-1}$ be a $\GF(2)$-basis for $X$, and define
 \[ \calG \colonequals \langle (1,x_i), (M,u) : 1 \le i \le 3m - 1 \rangle.\]
 Note that $\calG \cap (1 \times G) = 1 \times X$, and, since $M$ stabilizes $X$,
 \[ (M, u)^{-1} (1 \times X) (M,u) = 1 \times X,\]
 meaning $1 \times X$ is a normal subgroup of $\calG$.  Furthermore, $G = X \cup (X + u)$, and 
 \[X^{(M,u)} = X^M + u = X + u,\]
 meaning $\calG$ is transitive on the cosets of $X$ in $G$.  Therefore, by Theorem \ref{thm:transfer}, $\calG$ contains a PDS with the same parameters as $D$.
 
 Finally, we note that
 \[ (1, (0,1,0)) \cdot (M, u) = (M, (0,1,0)^M + u) = (M, (0,0,1) + (0,1,1)) = (M, (0,1,0)),\]
 whereas
 \[ (M,u) \cdot (1, (0,1,0)) = (M, u + (0,1,0)) = (M, (0,0,1)),\]
 and so $\calG$ is indeed nonabelian.
\end{proof}

\subsection{Nonabelian groups containing a Denniston PDS related to non-elementary abelian $2$-groups}
\label{subsect:evenDennisnonelab}

We follow the notation of \cite{Davis_Xiang_2000} (and refer the interested reader there for further details not explicitly mentioned here).

We first construct a Galois ring over $\Z/4\Z$ as follows.  Let $\Phi_2(x) \in GF(2)[x]$ be a primitive polynomial of degree $t$.  There exists a unique polynomial $\Phi(x) \in \Z/4\Z[x]$ of degree $t$ such that $\Phi(x) \equiv \Phi_2(x) \pmod 2$ and $\Phi(x)$ divides $x^{2^t - 1} - 1 \in \Z/4\Z[x]$.  We denote the quotient ring $\Z/4\Z[x]/(\Phi(x))$ by $\GR(4,t)$; this is the \textit{Galois ring over $\Z/4\Z$ of degree $t$}.  We define $R \colonequals \GR(4,t)$ and $h$ to be a root of $\Phi(x)$ in $R$; note that $h$ has multiplicative order $2^t - 1$ and $R = \Z/4\Z[h]$. 

The ring $R$ is a finite local ring with unique maximal ideal $2R$, and $R/2R \cong \GF(2^t)$.  If we denote the natural projection from $R$ to $\GF(2^t)$ by $\pi$, then $g \colonequals \pi(h)$ is a primitive element of $\GF(2^t)$.  Moreover, as $\GF(2)$-vector spaces, $2R = \{0 ,2, 2h, \dots, 2h^{2^t - 2} \}$ and $\GF(2^t)$ are isomorphic, with an explicit isomorphism given by $\phi(2h^i) = g^i$ for $0 \le i \le 2^t - 2$ and $\phi(0) = 0$.

If $\tr: \GF(2^t) \to \GF(2)$ denotes the standard trace map
\[ \tr(\alpha) = \sum_{i = 0}^{t - 1} \alpha^{2^i},\]
then $H_0 \colonequals \{ r \in \GF(2^t) : \tr(r) = 0 \}$ is a hyperplane, and the multiplicative action of $g$ acts as a Singer cycle and is transitive on hyperplanes, i.e., the hyperplanes of $\GF(2^t)$ are $H_j \colonequals g^jH_0$, where $0 \le j \le 2^t - 2$.  Thus, the $(t-1)$-dimensional $\GF(2)$ subspaces of $2R$ are of the form $K_j \colonequals  \phi^{-1}(H_j)$, $0 \le j \le 2^t - 2$.  More explicitly, $K_0 = \{ r \in 2R : \tr(\phi(r)) = 0\}$ and $K_j = h^jK_0$ for $1 \le j \le 2^t - 2$.

We are now ready to define our PDS.  Define
\[ G \colonequals (R, +) \times (\GF(2^t), +) \cong C_4^t \times C_2^t.\]
Let $w \in R/2R$ with $\tr(w) = 1$; let $a_{i,j}$ be any element of $R$ such that
 \[ \pi(a_{i,j}) = g^i(1 + w) + g^jw,\]
 where $0 \le i,j \le 2^t - 2$;  and let
 \[ E_i \colonequals \bigcup_{j = 0}^{2^t - 2} (h^i + h^{2i -j} + 2a_{i,j} + K_j).\]
Define 
\[ D \colonequals \bigcup_{i = 0}^{2^t - 2} (E_i, g_i),\]
where $(E_i, g^i) = \{(r,g^i): r \in E_i\}.$   

\begin{thm}\cite[Theorems 3.2]{Davis_Xiang_2000}
 \label{thm:abelianDenniston}
 The set $D$ defined above is a $(2^{3t}, (2^{2t-1} - 2^{t-1})(2^t - 1), 2^{t-1} + (2^{2t-1} - 2^{t-1})(2^{t-1} - 2), (2^{2t-1} - 2^{t-1})(2^{t-1} - 1))$-PDS in $G \cong C_4^t \times C_2^t$.
\end{thm}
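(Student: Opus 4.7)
Since $G \cong C_4^t \times C_2^t$ is abelian, the natural route is to verify the PDS property via character sums: a subset $D$ with $1 \notin D = D^{(-1)}$ is a $(v,k,\lambda,\mu)$-PDS if and only if the principal character gives the value $k$ and every non-principal character takes a value in $\{r_1, r_2\}$, where $r_{1,2} = ((\lambda - \mu) \pm \sqrt{(\lambda - \mu)^2 + 4(k-\mu)})/2$. Substituting the claimed Denniston parameters gives $r_1 = 2^{t-1}$ and $r_2 = -2^{t-1}(2^t - 1)$, and the cardinality check $|D| = (2^t - 1) \cdot (2^t - 1) \cdot 2^{t-1} = 2^{t-1}(2^t - 1)^2 = k$ pins down the principal character value.

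To set up the character computation, I would parameterize the characters of $G$ explicitly using the Galois-ring trace $\Tr : R \to \Z/4\Z$: every character of $R$ has the form $\chi_\alpha(x) = \zeta^{\Tr(\alpha x)}$ for some $\alpha \in R$ (with $\zeta$ a primitive fourth root of unity), and every character of $(\GF(2^t),+)$ has the form $\psi_\beta(y) = (-1)^{\tr(\beta y)}$ for some $\beta \in \GF(2^t)$. Using the definition of $D$,
\[ (\chi_\alpha \otimes \psi_\beta)(D) \;=\; \sum_{i=0}^{2^t - 2} \chi_\alpha(E_i)\, \psi_\beta(g^i), \]
with each inner term a sum over the $2^t - 1$ translates $h^i + h^{2i - j} + 2a_{i,j} + K_j$ that make up $E_i$. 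The key structural fact is that the $K_j$'s exhaust the index-two subgroups of $2R$, so the restriction of $\chi_\alpha$ to each $K_j$ either is trivial (contributing $|K_j| = 2^{t-1}$ to the inner sum) or cancels to zero; via the isomorphism $\phi: 2R \to \GF(2^t)$ this dichotomy is controlled by whether $\pi(\alpha) \in \GF(2^t)$ lies in the hyperplane $H_j = g^j H_0$.

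The main obstacle is tracking the phases introduced by the translates $h^i + h^{2i - j} + 2a_{i,j}$ and showing they assemble into a computable expression. Casework splits naturally into $\alpha \in 2R$ versus $\alpha \notin 2R$, with further subcases on $\beta$; the specific choice $\pi(a_{i,j}) = g^i(1+w) + g^j w$ with $\tr(w) = 1$ is engineered precisely so that, after unwinding the Galois-ring arithmetic and isolating the surviving $j$ in each summand, the exponent becomes a non-degenerate binary quadratic form on $\GF(2^t)^2$. The character sum of such a form evaluates by a standard Gauss-sum identity to $\pm 2^{t-1}$; combining this with the outer Singer-orbit sum over $i \in \{0,1,\ldots,2^t-2\}$ in $\GF(2^t)^\times$ produces character values lying in exactly $\{2^{t-1},\, -2^{t-1}(2^t - 1)\}$ for every non-principal $\chi_\alpha \otimes \psi_\beta$. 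Once the cases are verified and the multiplicities shown to match, the character criterion confirms that $D$ is a PDS with the claimed Denniston parameters.
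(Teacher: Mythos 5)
The paper does not prove this statement at all: it is quoted from Davis and Xiang \cite{Davis_Xiang_2000}, whose proof is indeed the character-sum argument you outline, so your overall strategy (two-eigenvalue criterion for strongly regular Cayley graphs, characters of $R$ via the Galois-ring trace, the hyperplane dichotomy on the $K_j$, exploiting the choice of $a_{i,j}$) is the right one, and your computation of the restricted eigenvalues $2^{t-1}$ and $-2^{t-1}(2^t-1)$ is correct. The problem is that your proposal defers exactly the part that constitutes the proof. First, even the identity $|D|=k$ silently assumes that the $2^t-1$ cosets $h^i+h^{2i-j}+2a_{i,j}+K_j$ making up each $E_i$ are pairwise disjoint; this needs an argument (it comes from analyzing these cosets modulo $2R$ and inside $2R$), not just a count. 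Second, and more seriously, the central claim that for $\alpha\notin 2R$ the phases $\zeta^{\Tr(\alpha(h^i+h^{2i-j}+2a_{i,j}))}$ ``assemble into a non-degenerate binary quadratic form on $\GF(2^t)^2$'' whose character sum is $\pm 2^{t-1}$ is asserted, not derived. These characters have order $4$, so the inner sums are quaternary (Galois-ring) Gauss-type sums over the Teichm\"uller representatives $h^i$, not binary quadratic-form sums over $\GF(2^t)^2$; showing that the sum over $j$ collapses for all but the right values of $i$, and that the surviving contributions over $i$ total either $2^{t-1}$ or $-2^{t-1}(2^t-1)$ (and never anything else), is precisely where the specific normalization $\pi(a_{i,j})=g^i(1+w)+g^jw$ with $\tr(w)=1$ must be used. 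Without carrying out that case analysis ($\alpha\in 2R$ versus $\alpha\notin 2R$, $\beta$ zero or not), nothing in your argument forces the nonprincipal character values into the two-element set, so the PDS property is not established.

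Two smaller points. The dichotomy for $\chi_\alpha$ restricted to $K_j$ is governed by the trace pairing $\tr\bigl(\pi(\alpha)\,\phi(K_j)\bigr)=0$, i.e.\ a condition of the form $\pi(\alpha)\in g^{-j}H_0^{\ast}$ for the appropriate dual hyperplane, not literally ``$\pi(\alpha)\in H_j$''; stating it loosely risks misindexing the surviving $j$ in the main computation. Also, before invoking the two-eigenvalue criterion you must check $0\notin D$ and $D=D^{(-1)}$ (the latter does hold here because $2a_{i,j}=-2a_{i,j}$, $-K_j=K_j$, and $-(h^i+h^{2i-j})\equiv h^i+h^{2i-j}\pmod{2R}$ up to an element of the relevant coset, but it is a step, not a given); once those hold, no separate ``multiplicity matching'' is needed, since the two-value property alone forces the parameters.
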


A PDS with the parameters listed in Theorem \ref{thm:abelianDenniston} is said to have \textit{Denniston parameters}.

We will explicitly define automorphisms of $G$ fixing $D$ when $t \ge 3$.  First, since $G$ is an abelian group and $D$ is a \textit{Type II PDS}, Ma's Multiplier Theorem \cite{Ma_1984} implies that the inversion map $\psi_0: G \to G$ defined by
\[\psi_0: (r, g^i) \mapsto (-r, -g^i) = (r + 2r, g^i)\] is an automorphism of $G$ fixing $D$ setwise.    

We define further automorphisms $\psi_\ell: G \to G$, $1 \le \ell \le t-1$, as follows.  First, define $f_\ell: \GF(2^t) \to 2R$ by $f_\ell(0) \colonequals 0$ and
\[ f_\ell(g^i) \colonequals \phi^{-1}(\tr(g)) \cdot h^i + \sum_{\substack{0 \le k \le t-1,\\ k \neq \ell-1, \ell-2 \mod t}}\limits 2h^{i + 2^k}.\] 
Now, we define $\psi_\ell: G \to G$ by
\[ \psi_\ell: (r, s) \mapsto (r + 2rh^{2^{\ell-1}} + 2f_\ell(s), s).\]
Indeed, $\psi_\ell$ is an automorphism since generating elements are sent to generating elements.  Furthermore, note that for any $2r \in 2R$, $\psi_\ell((2r,0)) = (2r,0)$, so $\psi_\ell$ fixes the elements of $(2R,0)$.  Furthermore, like the inversion map $\psi_0$, it is clear that each $\psi_\ell$ has order $2$, and explicit calculation shows that $\psi_i$ and $\psi_j$ commute for all $i,j$, and collectively the $\psi_\ell$, $0 \le \ell \le t-1$, generate an elementary abelian group of automorphisms of order $2^t$.

\begin{lem}
  \label{lem:autDennis}
  The automorphism $\psi_\ell$, $1 \le \ell \le t-2$, fixes $D$ setwise.
\end{lem}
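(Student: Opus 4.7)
The plan is to reduce $\psi_\ell(D)=D$ to a single trace identity in $\GF(2^t)$ and then exploit the excluded indices in the definition of $f_\ell$ to verify it. Because $\psi_\ell$ fixes the second coordinate, the set $D=\bigcup_i(E_i,g^i)$ is preserved if and only if, for each $i$, the first-coordinate map $\tau_i\colon r\mapsto r+2rh^{2^{\ell-1}}+2f_\ell(g^i)$ sends $E_i$ into itself. Expanding $\tau_i$ on a generic element $r=h^i+h^{2i-j}+2a_{i,j}+k$ of $E_i$ with $k\in K_j$, and using $4=0$ in $R$ together with $2\cdot 2R=0$, the contributions of $2a_{i,j}$ and $k$ to $2rh^{2^{\ell-1}}$ vanish, leaving
\[
\tau_i(r)=h^i+h^{2i-j}+2a_{i,j}+\bigl(k+2h^{i+2^{\ell-1}}+2h^{2i-j+2^{\ell-1}}+2f_\ell(g^i)\bigr).
\]
Thus $\tau_i(r)$ stays in the same $j$-slot of $E_i$ if and only if the bracketed $2R$-element lies in $K_j$.

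Pushing the containment through the $\GF(2)$-linear isomorphism $\phi\colon 2R\to\GF(2^t)$, which carries $K_j$ onto $g^jH_0$, and using that $\phi(k)\in g^jH_0$ automatically, the question reduces to the single identity
\[
\tr\!\Bigl(g^{-j}\bigl(g^{i+2^{\ell-1}}+g^{2i-j+2^{\ell-1}}+\phi(2f_\ell(g^i))\bigr)\Bigr)=0.
\]
Setting $c=i-j$ and substituting the explicit formula for $f_\ell$, the left-hand side becomes a combination of $\tr(g^{c+2^{\ell-1}})$, $\tr(g^{2c+2^{\ell-1}})$, a $\tr(g)\tr(g^c)$ correction, and the sum $\sum_{k\neq\ell-1,\ell-2\bmod t}\tr(g^{c+2^k})$. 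The two tools I would use are the Frobenius invariance $\tr(x)=\tr(x^{2^n})$, which rewrites $\tr(g^{2c+2^{\ell-1}})$ as $\tr(g^{c+2^{\ell-2\bmod t}})$, and the identity $\sum_{k=0}^{t-1}g^{2^k}=\tr(g)\in\GF(2)$, which gives $\sum_{k=0}^{t-1}\tr(g^{c+2^k})=\tr(g)\tr(g^c)$.

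The main obstacle is orchestrating the cancellation, and this is precisely why $f_\ell$ is designed to omit the two indices $k=\ell-1$ and $k=\ell-2\bmod t$ from its defining sum. Those ``missing'' contributions $\tr(g^{c+2^{\ell-1}})+\tr(g^{c+2^{\ell-2\bmod t}})$ are exactly what is needed to absorb the two problematic terms left over after Frobenius-rewriting, collapsing everything to $2\tr(g)\tr(g^c)=0$ in $\GF(2)$. The hypothesis $1\le\ell\le t-2$ guarantees that the two excluded indices are genuinely distinct, so precisely the right two summands are omitted and the cancellation is unambiguous. Once the identity is confirmed for every $c$, the conclusion $\tau_i(E_i)=E_i$ follows for each $i$, and so $\psi_\ell$ fixes $D$ setwise.
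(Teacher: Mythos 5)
Your proof is correct and is essentially the paper's own argument: reduce to showing that the first‑coordinate perturbation $2h^{i+2^{\ell-1}}+2h^{2i-j+2^{\ell-1}}+2f_\ell(g^i)$ lies in $K_j$, transfer this through $\phi$ to a trace identity over $\GF(2^t)$, and cancel using $\tr(x)=\tr(x^2)$ together with $\sum_{k=0}^{t-1}g^{2^k}=\tr(g)$, with the two omitted indices $\ell-1,\ell-2$ supplying exactly the missing summands (the paper squares terms where you take square roots, a cosmetic difference). One small quibble: the indices $\ell-1$ and $\ell-2$ are already distinct mod $t$ for every $\ell$ once $t\ge 2$, so the hypothesis $1\le\ell\le t-2$ is not what guarantees their distinctness.
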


\begin{proof}
 It suffices to prove that $\psi_\ell$ fixes each $(E_i, g_i)$ setwise, and hence it suffices to prove that $\psi_\ell$ fixes $(h^i + h^{2i -j} + 2a_{i,j} + K_j, g^i)$ setwise for each $i$ and $j$.  Moreover, $\psi_\ell$ fixes the element $2a_{i,j}$ and the set $K_j$, so it suffices to prove that $(h^i + h^{2i - j}, g^i)^{\psi_\ell} - (h^i + h^{2i - j}, g^i) \in K_j = h^jK_0$ for each $i,j$.  Note that 
 \begin{align*} 
 (h^i + h^{2i - j}, g^i)^{\psi_\ell} &- (h^i + h^{2i - j}, g^i) = (2h^{i+2^{\ell -1}} + 2h^{2i - j + 2^{\ell - 1}} + 2f_\ell(g^i), 0)\\ 
      &= \left(2h^{i+ 2^{\ell - 1}} + 2h^{2i - j + 2^{\ell - 1}} + \phi^{-1}(\tr(g))\cdot h^i + \sum_{\substack{0 \le k \le t-1,\\ k \neq \ell-1, \ell-2 \mod t}}\limits 2h^{i + 2^k}, 0 \right). 
 \end{align*}
  This element will be in $K_j$ if and only if 
 \[ 2h^{(i - j) + 2^{\ell - 1}} + 2h^{2(i - j) + 2^{\ell - 1}} + \phi^{-1}(\tr(g))\cdot h^{i-j} + \sum_{\substack{0 \le k \le t-1,\\ k \neq \ell-1, \ell-2 \mod t}}\limits 2h^{(i - j) + 2^k} \in K_0,\]
and this holds if and only if, for each $m$,
 \[ \tr\left(g^{m+ 2^{\ell - 1}} + g^{2m+ 2^{\ell - 1}} + \tr(g)\cdot g^m + \sum_{\substack{0 \le k \le t-1,\\ k \neq \ell-1, \ell-2 \mod t}}\limits g^{m + 2^k}\right) = 0.\]
Now, $\tr$ is additive and, for all $\alpha \in \GF(2^t)$, $\tr(\alpha) = \tr(\alpha^2)$, so, indeed,
\begin{align*}
 &\tr\left(g^{m+ 2^{\ell - 1}} + g^{2m+ 2^{\ell - 1}} +  \tr(g)\cdot g^m + \sum_{\substack{0 \le k \le t-1,\\ k \neq \ell-1, \ell-2 \mod t}}\limits g^{m + 2^k}\right)\\ 
 &\text{       }= \tr\left( g^{2m+2^\ell} + g^{2m+ 2^{\ell - 1}} + \tr(g)\cdot g^{2m} + \sum_{\substack{0 \le k \le t-1,\\ k \neq \ell-1, \ell-2 \mod t}}\limits g^{2m + 2^{k+1}}\right)\\
  &\text{       }= \tr\left(\tr(g) \cdot g^{2m} + g^{2m} \cdot \tr(g) \right)\\
  &\text{       }= 0,
\end{align*}
and, therefore, the automorphism $\psi_\ell$ fixes $D$ setwise.
 \end{proof}

\begin{thm}
 \label{thm:nonabelianDennis}
 Let $t \ge 2$ be a positive integer.  For each $k$, $1 \le k \le t$, here exists a nonabelian group \[\calG \cong (C_4^t \times C_2^{t-k}) \rtimes (C_2^k)\] that contains a PDS with Denniston parameters; namely, $\calG$ contains a PDS with parameters
 \[ (2^{3t}, (2^{2t-1} - 2^{t-1})(2^t - 1), 2^{t-1} + (2^{2t-1} - 2^{t-1})(2^{t-1} - 2), (2^{2t-1} - 2^{t-1})(2^{t-1} - 1)).\]
\end{thm}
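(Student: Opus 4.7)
The plan is to apply Theorem \ref{thm:transfer} with a carefully chosen subgroup $\calG \le \Aut(G)_D \ltimes G$, where $G = (R,+) \times (\GF(2^t), +) \cong C_4^t \times C_2^t$ and the automorphisms come from $\psi_0$ together with the $\psi_\ell$'s produced in Section \ref{subsect:evenDennisnonelab}.

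First, I would fix a $\GF(2)$-basis $s_1, \dots, s_t$ of $\GF(2^t)$, set $V = \langle s_1, \dots, s_{t-k} \rangle$, and take $X \colonequals R \times V \cong C_4^t \times C_2^{t-k}$, so that $[G:X] = 2^k$. Each $\psi_\ell$ acts trivially on the $\GF(2^t)$-factor and preserves the $R$-factor, so $X$ is $\psi_\ell$-invariant. I would then select $k$ automorphisms $\psi_{\ell_1}, \dots, \psi_{\ell_k}$ from $\{\psi_0, \psi_1, \dots, \psi_{t-1}\}$ (all of which stabilize $D$) and define
\[ \calG \colonequals \langle (1, x),\ (\psi_{\ell_i}, (0, s_{t-k+i})) : x \in X,\ 1 \le i \le k \rangle.\]
Since $G$ is abelian and $X^{\psi_{\ell_i}} = X$, conjugation of $(1, x)$ by each generator $(\psi_{\ell_i}, (0, s_{t-k+i}))$ yields $(1, x^{\psi_{\ell_i}}) \in 1 \times X$, so $1 \times X$ is normal in $\calG$.

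The decisive observation is that $f_\ell$ takes values in $2R$, so $2 f_\ell(s) \in 4R = \{0\}$ in $R = \Z/4\Z[h]$. Using the semidirect-product multiplication $(\phi_1, g_1)(\phi_2, g_2) = (\phi_1\phi_2, g_1^{\phi_2} g_2)$, a direct computation shows that each generator $(\psi_{\ell_i}, (0, s_{t-k+i}))$ squares to the identity and that these $k$ generators pairwise commute. Moreover, the images of their $G$-components modulo $X$ are $\{X \cdot (0, s_{t-k+i})\}_{i=1}^{k}$, which are linearly independent in $G/X \cong C_2^k$, so the subgroup $\calK$ generated by these elements satisfies $\calK \cong C_2^k$ and $\calK \cap (1 \times X) = 1$. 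Hence $|\calG| = 2^k |X| = |G|$ and $\calG = (1 \times X) \rtimes \calK$.

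Condition (iii) of Theorem \ref{thm:transfer} is then immediate: under the action of $\calG$ on $[G:X]$, each generator $(\psi_{\ell_i}, (0, s_{t-k+i}))$ sends $X$ to $X \cdot (0, s_{t-k+i})$, and suitable products of these reach every coset of $G/X$. By Theorem \ref{thm:transfer}, $\calG$ contains a PDS with Denniston parameters. Finally, $\calG$ is nonabelian because any $\psi_{\ell_i}$ acts non-trivially on $R \subseteq X$, and the structure $\calG \cong X \rtimes C_2^k \cong (C_4^t \times C_2^{t-k}) \rtimes C_2^k$ follows from the semidirect decomposition above. The main obstacle is the group-theoretic bookkeeping inside $\Aut(G)_D \ltimes G$ — specifically, verifying that the chosen generators have order $2$ and commute — and this is precisely where the $2R$-valuedness of $f_\ell$ (giving $2 f_\ell \equiv 0$ in $R$) does the heavy lifting.
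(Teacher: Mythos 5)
Your overall strategy is the same as the paper's: take $X \cong C_4^t \times C_2^{t-k}$ of index $2^k$, adjoin $k$ elements of the form $(\psi_{\ell_i},(0,s_{t-k+i}))$, verify conditions (i)--(iii) of Theorem \ref{thm:transfer}, and then read off nonabelianness and the isomorphism type. However, the step you call the ``decisive observation'' --- that $f_\ell$ maps into $2R$, hence $2f_\ell(s)\in 4R=\{0\}$, so the generators are commuting involutions --- is a genuine gap. You are taking literally what is evidently a notational slip in the definition of $\psi_\ell$: the proof of Lemma \ref{lem:autDennis} expands $2f_\ell(g^i)$ as the (generally nonzero) element $\phi^{-1}(\tr(g))\cdot h^i+\sum 2h^{i+2^k}$ of $2R$, and the worked $t=3$ example has $e^{\psi_1}=a^2b^2e$, i.e.\ $(0,g)^{\psi_1}=(2+2h,g)$, so the correction term $(0,s)^{\psi_\ell}-(0,s)$ does not vanish. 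If it really did vanish, then $\psi_\ell$ would be $(r,s)\mapsto(r+2rh^{2^{\ell-1}},s)$, and that map does \emph{not} stabilize $D$ (the trace cancellation in Lemma \ref{lem:autDennis} fails without the $f_\ell$ term), so your $\calG$ would not lie in $\Aut(G)_D\ltimes G$ and Theorem \ref{thm:transfer} would be inapplicable. With the correct $\psi_\ell$, your generators generally have order $4$ --- e.g.\ $(\psi_1,(0,g))^2=(1,(2+2h,0))$ when $t=3$ --- and two generators need not commute, since $(\psi_{\ell_1},(0,s))(\psi_{\ell_2},(0,s'))$ and $(\psi_{\ell_2},(0,s'))(\psi_{\ell_1},(0,s))$ differ in their $G$-components by the difference of the two correction terms (in the $t=3$ example, $(\psi_1,e)$ and $(\psi_2,f)$ do not commute). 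Consequently your claims $\calK\cong C_2^k$, $\calK\cap(1\times X)=1$, $|\calG|=2^k|X|$, and $\calG=(1\times X)\rtimes\calK$ collapse as written.

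The good news is that what you actually need is weaker and is true: squares and commutators of the chosen generators land in $1\times(2R\times\{0\})\subseteq 1\times X$; $X=R\times V$ is $\psi_\ell$-invariant, so conjugation shows $1\times X$ is normal in $\calG$; the homomorphism $(\phi,g)\mapsto(\phi,Xg)$ onto an order-$2^k$ subgroup of $\langle\psi_{\ell_1},\dots,\psi_{\ell_k}\rangle\times G/X$ has kernel exactly $1\times X$, giving $|\calG|=2^k|X|=|G|$ and $\calG\cap(1\times G)=1\times X$; and every coset $(0,\sum_{i\in I}s_{t-k+i})+X$ is reached because the $\psi$-corrections stay inside $X$. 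This is essentially how the paper argues (it checks normality and coset transitivity directly and never claims the adjoined generators are involutions). So your route does reach the existence of the PDS once the faulty computation is replaced by these containments; but the asserted isomorphism type $(C_4^t\times C_2^{t-k})\rtimes C_2^k$ cannot be obtained from your argument as it stands and would need a separate justification (for instance, adjusting the coset representatives within their $X$-cosets so as to produce an actual complement).
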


\begin{proof}
 Let $G$ and $D$ be as defined above, and let $k$ be a fixed integer, $1 \le k \le t$.  Define $\calG \le \Aut(G)_{D} \ltimes G$ by
 \[ \calG = \langle (1, (h^i,0)), (1, (0, g^j)), (\psi_\ell, (0,g^\ell)) : 0 \le i \le t-1, k \le j \le t-1, 0 \le \ell \le k-1 \rangle.\]
 
 Note that, if
 \[ X \colonequals \langle (h^i,0), (0, g^j) : 0 \le i \le t-1, k \le j \le t-1 \rangle, \]
 then $\calG \cap (1 \times G) = 1 \times X$ and $|G:X| = 2^k$.  Moreover, for each $\ell$, $0 \le \ell \le k-1$, we have
 \[ (\psi_\ell, (0, g^{\ell}))^{-1} (1 \times X)(\psi_\ell, (0, g^{\ell})) = 1 \times ((0,2g^{\ell}) + X^\psi_\ell) = 1 \times X,\]
 and so $1 \times X$ is a normal subgroup of both $1 \times G$ and $\calG$.  Finally, the cosets of $X$ in $G$ are of the form $\left(0, \sum_{i \in I}\limits g^i\right) + X$, where $I$ is a (possibly empty) subset of $\{0, \dots, k-1\}$, and, when $I$ is nonempty, we have
 \[ X^{\left( \prod_{i \in I} \psi_i, \sum_{i \in I} g^i\right)}= \left(0, \sum_{i \in I} g^i\right) + X^{\prod_{i \in I} \psi_i} = \left(0, \sum_{i \in I} g^i\right) + X,\]
so $\calG$ is indeed transitive on the cosets of $X$ in $G$.  Therefore, $\calG$ -- which, by construction is nonabelian and isomorphic to $(C_4^t \times C_2^{t-k}) \rtimes (C_2^k)$ -- satisfies the criteria of Theorem \ref{thm:transfer}, and so $G$ contains a PDS with Denniston parameters.
\end{proof}

\begin{ex}
 We consider the concrete example when $t = 3$.  Here, we choose the polynomial $\Phi(x) = x^3 + 2x^2 + x - 1$, so $\Phi_2(x) = x^3 + x + 1$.
 Note that \[\tr(g) = g + g^2 + g^4 = g \cdot \Phi_2(g) = 0,\]
 so $K_0 \langle 2h, 2h^2 \rangle$.
  
 To make the notation somewhat easier, we will consider the group
 \[ G = \langle a, b, c, d, e, f \rangle,\]
 where $G$ is abelian and 
 \[ a^4 = b^4 = c^4 = d^2 = e^2 = f^2 = 1.\]
 In this case, $a$ corresponds to $1 \in R$ (and $a^2$ corresponds to $2$), $b$ corresponds to $h \in R$ (and $b^2$ corresponds to $2h$), $c$ corresponds to $h^2 \in R$ (and $c^2$ corresponds to $2h^2$), $d$ corresponds to $1 \in \GF(8)$, $e$ corresponds to $g \in \GF(8)$, and $f$ corresponds to $g^2 \in \GF(8)$.  Thus, our $K_j$ will be:
 \begin{align*}
  K_0 &= \langle b^2, c^2 \rangle,\\
  K_1 &= \langle c^2, a^2b^2 \rangle,\\
  K_2 &= \langle a^2b^2, b^2c^2 \rangle,\\
  K_3 &= \langle b^2c^2, a^2b^2c^2 \rangle,\\
  K_4 &= \langle a^2b^2c^2, a^2c^2 \rangle,\\
  K_5 &= \langle a^2c^2, a^2 \rangle,\\
  K_6 &= \langle a^2, b^2 \rangle;
 \end{align*}
using group ring notation, we may choose the $E_i$ to be:
\begin{align*}
 E_0 &= K_0 + a^2c K_1 + b^3c^3 K_2 + abc^3 K_3 + a^2b K_4 + a^3b^2c^3 K_5 + a^3bc^2 K_6, \\
E_1 &= a^2bc K_0 + K_1 + abc^2 K_2 + a^3bc K_3 + a^3b^2c^3 K_4 + b^2c K_5 + a^3 K_6,\\
E_2 &= b^3 K_0 + abc^3 K_1 + K_2 + a^2b^3c K_3 + ab^2c^3 K_4 + a^3 K_5 + ab^3 K_6,\\
E_3 &= bc^3 K_0 + c^3 K_1 + a^3b^2c^3 K_2 + K_3 + abc K_4 + a^3b^2 K_5 + b^3 K_6,\\
E_4 &= c^3 K_0 + a^3bc^3 K_1 + a^3bc^2 K_2 + a^3 K_3 + K_4 + ac^3 K_5 + b^3c^2 K_6,\\
E_5 &= a^2b^2c^3 K_0 + a^3bc^2 K_1 + a^3c^3 K_2 + a^2bc K_3 + b^3 K_4 + K_5 + a^3b^2c^2 K_6,\\
E_6 &= a^2bc^2 K_0 + a^3b^3 K_1 + a^2bc K_2 + a^3c^2 K_3 + abc^3 K_4 + c^3 K_5 + K_6,\\
\end{align*}
and we have
\[ D = d E_0 + e E_1 + f E_2 + de E_3 + ef E_4 + def E_5 + df E_6.\]

We define $\psi_0$ to be the inversion map on $G$, and, since $\tr(g) = 0$, we define $\psi_1$ and $\psi_2$ as follows:

\begin{align*}
 a^{\psi_1} &= ab^2 \leftrightarrow (1,0)^{\psi_1} = (1 + 2h, 0),\\
 b^{\psi_1} &= bc^2 \leftrightarrow (h, 0)^{\psi_1} = (h + 2h^2, 0),\\
 c^{\psi_1} &= a^2b^2c \leftrightarrow (h^2,0)^{\psi_1} = (h^2 + 2h^3,0) = (h + (2 + 2h), 0),\\
 d^{\psi_1} &= c^2d \leftrightarrow (0,1)^{\psi_1} = \left(\sum_{\substack{0 \le k \le 2,\\ k \neq 0, 2 \mod 3}}2h^{0 + 2^k}, 1 \right) = (2h^2, 1)\\
 e^{\psi_1} &= a^2b^2e \leftrightarrow (0,g)^{\psi_1} = \left(\sum_{\substack{0 \le k \le 2,\\ k \neq 0, 2 \mod 3}}2h^{1 + 2^k}, g \right) = (2h^3, g) = (2 + 2h, g),\\
 f^{\psi_1} &= b^2c^2f \leftrightarrow (0,g^2)^{\psi_1} = \left(\sum_{\substack{0 \le k \le 2,\\ k \neq 0, 2 \mod 3}}2h^{2 + 2^k}, g^2 \right) = (2h^4,g^2) = (2h + 2h^2, g^2);
\end{align*}

\begin{align*}
 a^{\psi_2} &= ac^2 \leftrightarrow (1,0)^{\psi_2} = (1 + 2h^2, 0),\\
 b^{\psi_2} &= a^2b^3 \leftrightarrow (h, 0)^{\psi_2} = (h + 2h^3, 0) = (h + (2 + 2h), 0),\\
 c^{\psi_2} &= b^2c^3 \leftrightarrow (h^2,0)^{\psi_2} = (h^2 + 2h^4,0) = (h^2 + (2h + 2h^2), 0),\\
 d^{\psi_2} &= b^2c^2d \leftrightarrow (0,1)^{\psi_2} = \left(\sum_{\substack{0 \le k \le 2,\\ k \neq 1, 0 \mod 3}}2h^{0 + 2^k}, 1 \right) = (2h^4, 1) = (2h + 2h^2, 1)\\
 e^{\psi_2} &= a^2b^2c^2e \leftrightarrow (0,g)^{\psi_2} = \left(\sum_{\substack{0 \le k \le 2,\\ k \neq 1, 0 \mod 3}}2h^{1 + 2^k}, g \right) = (2h^5, g) = (2 + 2h + 2h^2, g),\\
 f^{\psi_2} &= a^2c^2f \leftrightarrow (0,g^2)^{\psi_2} = \left(\sum_{\substack{0 \le k \le 2,\\ k \neq 1, 0 \mod 3}}2h^{2 + 2^k}, g^2 \right) = (2h^6,g^2) = (2 + 2h^2, g^2).
\end{align*}

Indeed, $\psi_0$, $\psi_1$, and $\psi_2$ are each automorphisms of $G$ fixing $D$ setwise, and, if we define
\[ \calG \colonequals \langle (1,a), (1,b), (1,c), (\psi_0, d), (\psi_1, e), (\psi_2, f) \rangle,\]
then $\calG$ has order $512$, and, if $X = \langle a, b, c\rangle$, then $\calG \cap (1 \times G) = 1 \times X$.  It is easy to check that the action of $\calG$ on the cosets of $X$ in $G$ is transitive, and so Theorem \ref{thm:transfer} guarantees the existence of a PDS with Denniston parameters in $\calG \cong C_4^3 \rtimes C_2^3$.
 
\end{ex}

\subsection{Nonabelian Denniston PDSs in groups of odd order}

In \cite{Davis_etal_2023b}, the authors construct PDSs with Denniston parameters in a group of order $p^{3m}$, where $p$ is an odd prime.  (See also \cite{DeWinter_2023} and, more recently, \cite{Bao_etal_2024}.)

\begin{thm}\cite[Theorem 5.1]{Davis_etal_2023b}
 \label{thm:oddDenniston}
 Let $p$ be an odd prime and $m \ge 1$ be an integer.  If $\omega$ is a primitive element of $\GF(p^m)$ and $\alpha$ is a primitive element of $\GF(p^{2m})$, then the set
 \[ D \colonequals \bigcup_{i = 0}^{\frac{p^m - 1}{p - 1}-1} \left(\omega^i \left\langle \omega^{\frac{p^m - 1}{p - 1}} \right\rangle\right) \times \left(\alpha^i \left\langle \alpha^{\frac{p^m - 1}{p -1}} \right\rangle \cup \left\{0_{\GF(p^{2m}}) \right\}\right)\]
 is a PDS in the group $G \colonequals (\GF(p^m), +) \times (\GF(p^{2m}), +) \cong C_p^{3m}$ with parameters
 \begin{align*}
  v       &= p^{3m}, \\
  k       &= (p^m - 1)((p - 1)(p^m + 1) + 1),\\
  \lambda &= p^m - p + (p^{m+1} - p^m + p)(p - 2),\\
  \mu     &= (p^{m+1} - p^m + p)(p - 1).
 \end{align*}

\end{thm}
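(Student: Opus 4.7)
My plan is to verify that $D$ is a PDS via the additive character theory of $G = (\GF(p^m), +) \times (\GF(p^{2m}), +)$. The critical preliminary step is to reinterpret $D$ using the norm map. Without loss of generality take $\omega = \alpha^{p^m + 1}$, which is primitive in $\GF(p^m)$; setting $M = \tfrac{p^m - 1}{p - 1}$ and writing $N : \GF(p^{2m})^* \to \GF(p^m)^*$ for the norm $N(y) = y^{p^m+1}$, a direct check shows that $N$ sends $\alpha^i \langle \alpha^M \rangle$ onto $\omega^i \langle \omega^M \rangle = \omega^i \GF(p)^*$ with uniform fibres of size $p^m + 1$. This yields the clean description
\[ D = \{(x, y) \in \GF(p^m)^* \times \GF(p^{2m}) : N(y) \in x\, \GF(p) \} = \bigsqcup_{t \in \GF(p)} D_t, \]
where $D_0 = \GF(p^m)^* \times \{0\}$ and $D_t = \{(N(y)/t, y) : y \in \GF(p^{2m})^*\}$ for $t \neq 0$.

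By the standard character-theoretic criterion, since $D$ is inversion-closed and $1_G \notin D$, $D$ is a PDS with the stated parameters if and only if for every nontrivial character $\chi_{a,b}(x,y) = \zeta_p^{\tr_m(ax) + \tr_{2m}(by)}$ of $G$, one has $\chi_{a,b}(D) \in \{B, -A\}$, where $A = p^{m+1} - p^m + p$ and $B = p^m - p$. (The discriminant of the relevant quadratic $X^2 - (\lambda - \mu)X - (k - \mu)$ collapses to $p^{2(m+1)}$ since $A + B = p^{m+1}$ and $k - \mu = AB$, so these are exactly the two roots.) The two ``pure'' cases follow immediately by factoring: $\chi_{a,0}(D) = -A$ using $\sum_x \chi_a(x) = -1$ over $\GF(p^m)^*$, and $\chi_{0,b}(D) = (p-1)(M - 1) = B$ using $\sum_y \chi_b(y) = -1$ over $\GF(p^{2m})^*$.

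In the main subcase $a, b \neq 0$, the $D_t$-decomposition gives
\[ \chi_{a,b}(D) = -p + \sum_{t \in \GF(p)^*} W(a/t, b), \quad W(a', b) = \sum_{y \in \GF(p^{2m})} \zeta_p^{\tr_m(a' y^{p^m+1}) + \tr_{2m}(by)}. \]
The quadratic part $Q_{a'}(y) = \tr_m(a' N(y))$ is a non-degenerate $\F_p$-quadratic form on $\GF(p^{2m}) \cong \F_p^{2m}$; indeed its associated bilinear form $(u, v) \mapsto \tr_{2m}(a' u^{p^m} v)$ is non-degenerate because $a' \neq 0$ and Frobenius is a bijection. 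I plan to complete the square by the shift $y \mapsto y + b^{p^m}/a'$, reducing $W(a', b)$ to $\zeta_p^{-\tr_m(N(b)/a')}$ times the Gauss sum $\sum_y \zeta_p^{Q_{a'}(y)}$ (a short bookkeeping calculation using $N(b^{p^m}) = N(b)$ and $N(a') = (a')^2$).

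The main obstacle is identifying this Gauss sum as $-p^m$. I would handle this by picking $\theta \in \GF(p^{2m})$ with $\theta^2 = \gamma$ a fixed non-square of $\GF(p^m)$, writing $y = u + v\theta$ so that $N(y) = u^2 - \gamma v^2$, and factoring $\sum_y \zeta_p^{\tr_m(a' N(y))} = \eta(a') \eta(-a'\gamma)\, G^2$, where $\eta$ is the quadratic character of $\GF(p^m)$ and $G = \sum_z \zeta_p^{\tr_m(z^2)}$ is the classical quadratic Gauss sum with $G^2 = \eta(-1) p^m$; the product simplifies to $\eta(\gamma) p^m = -p^m$ since $\gamma$ is a non-square. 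Granting this, the $\F_p$-linearity of $\tr_m$ gives $\chi_{a,b}(D) = -p - p^m \sum_{t \in \GF(p)^*} \zeta_p^{-t c}$ with $c = \tr_m(N(b)/a) \in \GF(p)$. This evaluates to $-p - p^m(p-1) = -A$ when $c = 0$ and to $-p + p^m = B$ otherwise, so $\chi_{a,b}(D) \in \{B, -A\}$ in every case, proving the Denniston PDS property.
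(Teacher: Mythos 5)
First, a point of comparison: the paper does not prove this statement at all --- it is imported verbatim from \cite[Theorem 5.1]{Davis_etal_2023b} --- so there is no internal proof to measure you against, and I have assessed your argument on its own terms. Most of your machinery is correct and I could verify it directly: the admissible character values are indeed $\{p^m-p,\,-(p^{m+1}-p^m+p)\}$ (using $A+B=p^{m+1}$ and $k-\mu=AB$), the two pure cases come out as you say, the completion of the square gives $W(a',b)=\zeta_p^{-\tr_m(N(b)/a')}\sum_{z}\zeta_p^{\tr_m(a'N(z))}$, the inner sum equals $-p^m$ (your two-Gauss-sum computation is fine; even more simply, every nonzero fibre of $N$ has size $p^m+1$, so the sum is $1-(p^m+1)=-p^m$), and the endgame over $t\in\GF(p)^*$ is correct.

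The genuine gap is the opening step ``without loss of generality take $\omega=\alpha^{p^m+1}$.'' The theorem quantifies over an arbitrary primitive $\omega$ and an arbitrary primitive $\alpha$, and the set $D$ really depends on how the two chains of cosets are paired. Working in $Q:=\GF(p^m)^*/\GF(p)^*$, which is cyclic of order $M:=\frac{p^m-1}{p-1}$, membership $y\in\alpha^i\langle\alpha^M\rangle$ is equivalent to $\overline{N(y)}=\overline{N(\alpha)}^{\,i}$, so the defining condition of $D$ is $\overline{N(y)}=\bar{x}^{\,s}$, where $s\in(\Z/M\Z)^*$ is determined by $\overline{N(\alpha)}=\bar{\omega}^{\,s}$. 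Your clean norm description $N(y)\in x\,\GF(p)$, and with it the substitution $x=N(y)/t$ that turns $\chi_{a,b}(D)$ into the quadratic sums $W(a/t,b)$, is exactly the case $s=1$. This cannot be arranged ``without loss of generality'': replacing $\alpha$ by another primitive element of the same coset $\alpha\langle\alpha^M\rangle$ changes neither $D$ nor the class of $N(\alpha)$ modulo $\GF(p)^*$, and coordinatewise Frobenius maps only replace $s$ by $p^{j}s \pmod M$, so $s$ is an invariant of $D$ up to multiplication by $\langle p\rangle$. For example, when $p=3$, $m=3$ one has $M=13$ and $\langle 3\rangle=\{1,3,9\}\subsetneq(\Z/13\Z)^*$, so most admissible pairs $(\omega,\alpha)$ produce sets your argument never touches (for very small cases such as $p=3$, $m=2$ the Frobenius orbit happens to cover everything, which makes the issue easy to miss). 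For a general $s$ the computation changes character: detecting the coset condition with multiplicative characters $\psi$ trivial on $\GF(p)^*$ leads to sums of products of Gauss sums attached to $\bar\psi^{\,s}$ and to $\psi$, and the collapse you exploit (the product of the Gauss sums of $\psi$ and $\bar\psi$ being $\psi(-1)p^m$) is no longer available, so two-valuedness is not a routine extension of your argument. To prove the theorem as stated you must either carry out the character computation for an arbitrary pairing $s$ or show that all choices of $(\omega,\alpha)$ give equivalent sets; neither is done in the proposal, so as written it establishes only the norm-compatible subfamily of the claimed result.
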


Assume henceforth that $m$ is divisible by $p$, i.e., that $m = pt$ for some integer $t$, and define an automorphism $\phi$ on $G$ by 
\[ (x,y)^\phi \colonequals (x^{p^{2t}}, y^{p^{2t}}).\]
To see that this is an automorphism of $G$, we note that, if $\sigma_1$ is the Frobenius automorphism on $\GF(p^m)$ and $\sigma_2$ is the Frobenius automorphism on $\GF(p^{2m})$, then $\phi = \sigma_1^{2t} \times \sigma_2^{2t}$.  Moreover, since $\sigma_2$ has order $2m = 2pt$, $\sigma_2^{2t}$ has order $p$, and, since $\sigma_1$ has order $m = pt$ and $p$ is coprime to $2$, $\sigma_1^{2t}$ also has order $p$; hence, $\phi$ is an automorphism of order $p$.

Furthermore, since $p$ is coprime to $(p^m - 1)/(p - 1)$, we have
\[ \left(\omega^i \left\langle \omega^{\frac{p^m - 1}{p - 1}} \right\rangle\right)^{\sigma_1^{2t}} = \left(\omega^{ip^{2t}} \left\langle \omega^{\frac{p^m - 1}{p - 1}} \right\rangle\right),\]
\[ \left(\alpha^i \left\langle \alpha^{\frac{p^m - 1}{p -1}} \right\rangle \cup \left\{0_{\GF(p^{2m}}) \right\}\right)^{\sigma_2^{2t}} = \left(\alpha^{ip^{2t}} \left\langle \alpha^{\frac{p^m - 1}{p -1}} \right\rangle \cup \left\{0_{\GF(p^{2m}}) \right\}\right),\]
and hence $\phi$ fixes $D$ setwise.

\begin{thm}
 \label{thm:nonabelianoddDennis}
 Let $p$ be an odd prime and $m = pt$ be a multiple of $p$.  There exists a nonabelian group $\calG$ that contains a PDS with Denniston parameters
 \begin{align*}
  v       &= p^{3m}, \\
  k       &= (p^m - 1)((p - 1)(p^m + 1) + 1),\\
  \lambda &= p^m - p + (p^{m+1} - p^m + p)(p - 2),\\
  \mu     &= (p^{m+1} - p^m + p)(p - 1).
 \end{align*}
\end{thm}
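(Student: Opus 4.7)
The plan is to apply Theorem \ref{thm:transfer} to the group $G = (\GF(p^m), +) \times (\GF(p^{2m}), +)$, the PDS $D$ of Theorem \ref{thm:oddDenniston}, and the order-$p$ automorphism $\phi = \sigma_1^{2t} \times \sigma_2^{2t} \in \Aut(G)_D$ constructed just above. The key ingredient is a short analysis of the $\F_p[\phi]$-module structure of $G$, after which a standard transfer argument yields the result.

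First, I would determine the Jordan type of $\phi$ on $G$. The map $\sigma_1^{2t}$ has order $p$ on $\GF(p^m) = \GF(p^{pt})$ with fixed subfield $\GF(p^t)$ of $\F_p$-dimension $t$; since $\dim_{\F_p}\GF(p^m) = pt$, and since for an order-$p$ linear map in characteristic $p$ the number of Jordan blocks equals the fixed-space dimension while the maximum block size is at most $p$, $\sigma_1^{2t}$ has exactly $t$ Jordan blocks on $\GF(p^m)$, each of size $p$. The same reasoning gives $2t$ Jordan blocks of size $p$ for $\sigma_2^{2t}$ on $\GF(p^{2m})$. Writing $N := \phi - 1$, every Jordan block of $\phi$ on $G$ therefore has size exactly $p$, and in particular $\mathrm{Fix}(\phi) = N^{p-1}G \subseteq NG$.

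Next, I would build the transfer group. If $X$ is any $\phi$-invariant hyperplane of $G$, then $\phi$ acts $\F_p$-linearly on $G/X \cong \F_p$ through $\F_p^\times$, which has order $p-1$ coprime to $p$, so $\phi$ acts trivially on $G/X$; hence $NG \subseteq X$, and by the Jordan analysis $\mathrm{Fix}(\phi) = N^{p-1}G \subseteq X$ as well. Such an $X$ exists because $G/NG$ is a nonzero $\F_p$-vector space; for concreteness, $X = \ker(\Tr_{\GF(p^m)/\F_p}) \times \GF(p^{2m})$ works, as the absolute trace is Galois invariant. Pick any $g \in G \setminus X$, for instance $g = (\eta, 0)$ with $\Tr_{\GF(p^m)/\F_p}(\eta) = 1$, and set
\[ \calG := \langle 1 \times X,\ (\phi, g) \rangle \le \Aut(G)_D \ltimes G. \]

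Finally, I would verify the three hypotheses of Theorem \ref{thm:transfer} together with nonabelianness of $\calG$. A direct semidirect-product computation yields $(\phi, g)^p = (1, N^{p-1}g)$, which lies in $1 \times X$ since $N^{p-1}g \in \mathrm{Fix}(\phi) \subseteq X$; hence $1 \times X$ has index $p$ in $\calG$ and $|\calG| = |G|$. Normality of $1 \times X$ in $\calG$ follows from the identity $(\phi, g)^{-1}(1, x)(\phi, g) = (1, x^\phi)$ together with $\phi$-invariance of $X$. For transitivity on $[G:X]$, since $g^\phi - g = Ng \in NG \subseteq X$, an easy induction gives $X^{(\phi, g)^i} = X + ig$ for each $i$, and these are $p$ distinct cosets because $g \notin X$. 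Theorem \ref{thm:transfer} then yields a PDS with Denniston parameters in $\calG$, and $\calG$ is nonabelian because conjugation by $(\phi, g)$ restricts to $\phi|_X$ on $1 \times X$, which is nontrivial since $|X| = p^{3m-1} > p^{3t} = |\mathrm{Fix}(\phi)|$. The main obstacle is the Jordan-type analysis guaranteeing $(\phi, g)^p \in 1 \times X$: without knowing that every Jordan block of $\phi$ has size exactly $p$, one could not conclude that $\mathrm{Fix}(\phi)$ lies in every $\phi$-invariant hyperplane of $G$, which is precisely what forces the $p$th power of $(\phi, g)$ to land in the hyperplane chosen for the transfer.
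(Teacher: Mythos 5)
Your proof is correct and follows the same basic strategy as the paper's: apply Theorem \ref{thm:transfer} to the PDS $D$ of Theorem \ref{thm:oddDenniston} with the Galois automorphism $\phi=\sigma_1^{2t}\times\sigma_2^{2t}$, a $\phi$-invariant hyperplane $X$, and the index-$p$ extension $\calG=\langle 1\times X,(\phi,g)\rangle$. The difference is in the choice of $X$ and $g$: the paper takes $g=(0,1)$, which is fixed by $\phi$, and $X$ a ($\phi$-invariant) complement of $\langle(0,1)\rangle$, so that $(\phi,(0,1))^p=(1,p\cdot(0,1))=(1,0)$ trivially and $\calG\cong X\rtimes C_p$; you instead take $X=\ker(\Tr_{\GF(p^m)/\F_p})\times\GF(p^{2m})$ and $g=(\eta,0)$ with $\Tr_{\GF(p^m)/\F_p}(\eta)=1$, which requires the extra verification $(\phi,g)^p=(1,N^{p-1}g)\in 1\times X$. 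Your verification is valid, and your explicit $X$ has the merit of being manifestly $\phi$-invariant (the paper's ``complement hyperplane'' must in fact be chosen $\phi$-invariantly, and your trace construction shows how); also your $(\phi,g)$ can have order $p^2$ (when the relative trace of $\eta$ to $\GF(p^t)$ is nonzero), so your $\calG$ may differ in isomorphism type from the paper's, which is harmless since only existence is claimed and parallels the paper's remark following the theorem about exponent $p^2$. One comment: the Jordan-type analysis, while correct, is dispensable, and your closing claim that it is the essential obstacle is overstated. Since $\phi$ is $\F_p$-linear of order $p$, it acts trivially on the one-dimensional quotient $G/X$ for \emph{any} $\phi$-invariant hyperplane $X$, so $NG\subseteq X$ and hence $N^{p-1}g\in N^{p-1}G\subseteq NG\subseteq X$ without knowing anything about block sizes; the block analysis is only needed for the sharper (unused) identity ${\rm Fix}(\phi)=N^{p-1}G$.
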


\begin{proof}
 The proof is similar to that of Theorem \ref{thm:nonabelianDennis2}. Let $G$, $D$, and $\phi$ be as defined above.  Define $X$ to be a complement hyperplane to $(0,1) \in G = (\GF(p^m), +) \times (\GF(p^{2m}), +)$, where $G$ is considered as a $\GF(p)$-vector space.  Thus, 
 \[ G = X \oplus \langle (0,1) \rangle.\]
 Since $\phi$ stabilizes $(0,1)$, $\phi$ stabilizes $X$ setwise.  Let $x_1, \dots x_{3m-1}$ be a $\GF(p)$-basis for $X$, and define
 \[ \calG \colonequals \langle (1,x_i), (\phi, (0,1)) : 1 \le i \le 3m - 1 \rangle.\]
 Note that $\calG \cap (1 \times G) = 1 \times X$, and, since $\phi$ stabilizes $X$,
 \[ (\phi, (0,1))^{-1} (1 \times X) (\phi,(0,1)) = 1 \times X,\]
 meaning $1 \times X$ is a normal subgroup of $\calG$.  Furthermore, $G = \bigcup_{i = 0}^{p - 1}\limits X + i(0,1)$, and 
 \[X^{(\phi^i, i(0,1))} = X^{\phi^i} + i(0,1) = X + i(0,1),\]
 meaning $\calG$ is transitive on the cosets of $X$ in $G$.  Therefore, by Theorem \ref{thm:transfer}, $\calG$ contains a PDS with the same parameters as $D$.
 
 Finally, we note that
 \[ (1, (0, \alpha)) \cdot (\phi, (0,1)) = (\phi, (0, \alpha)^\phi + (0,1)) = (\phi, (0, \alpha^{p^{2t}}+1),\]
 whereas
 \[ (\phi, (0,1)) \cdot (1, (0, \alpha)) = (\phi, (0, \alpha + 1)),\]
 which are not equal since $\alpha$ is a primitive element with multiplicative order $p^{2m} - 1$, and so $\calG$ is indeed nonabelian.
\end{proof}

\begin{rem}
 Define $q \colonequals p^{t}$.  Then, $p^{m} = q^p$, and so $\GF(p^{m})$ is a degree $p$ extension of $\GF(q)$.  Define $\tr_{q}$ to be the trace map of $\GF(p^{m})$ over $\GF(q)$, i.e.,
 \[ \tr_q(\beta) = \sum_{i = 0}^{p-1} \beta^{q^i};\]
 similarly, $p^{2m} = (q^2)^p$, $\GF(p^{2m})$ is a degree $p$ extension of $\GF(q^2)$, and we define $\tr_{q^2}$ to be the trace map of $\GF(p^{2m})$ over $\GF(q^2)$.
 
 Note first that, for any $(\phi^i, (\beta, \gamma)) \in \calG$, since $p$ is odd, direct calculation shows that 
 \[ (\phi^i, (\beta, \gamma))^p = ((\phi^i)^p, (\tr_q(\beta), \tr_{q^2}(\gamma))) = (1, (\tr_q(\beta), \tr_{q^2}(\gamma))) \in 1 \times G,\]
 so every element of $\calG$ has order at most $p^2$.  On the other hand, let $\theta \in \GF(p^m)$ be any element with $\tr_q(\theta) \neq 0$.  Then, $(\phi, (\theta, 1)) = ((\phi, (0,1))(1, (\theta, 0)) \in \calG$, and 
 \[ (\phi, (\theta, 1))^p = (1, (\tr_q(\theta), \tr_{q^2}(1))) = (1, (\tr_q(\theta), 0)) \neq (1, (0,0)) = 1_\calG, \]
 so $\calG$ has exponent $p^2$.
\end{rem}

\section{Nonabelian DSs with McFarland parameters}
\label{sect:McFarland}

In \cite{McFarland_1973}, McFarland presents a very general construction for difference sets.  Let $q$ be a prime power and let $V$ be an $(s+1)$-dimensional $\GF(q)$-vector space.  Define 
\[ r \colonequals \frac{q^{s+1} - 1}{q - 1},\]
and let $H_1, \dots, H_r$ be the distinct hyperplanes of $V$.  Let $E = (V, +)$, the additive group of $V$, and consider the $H_i$ as subgroups of $E$.  Let $K$ be any group of order $r+1$, $k_1, \dots, k_r$ any $r$ distinct elements of $K$, and $e_1, \dots, e_r$ any $r$ elements of $E$ (not necessarily distinct).

\begin{thm}\cite{McFarland_1973}
\label{thm:McFarland}
 Using the notation above, if $G = E \times K$, then the set
 \[ D \colonequals \bigcup_{i = 1}^r (H_i + e_i, k_i)\]
 is a difference set in $G$ with parameters
 \begin{align*}
  v       &= q^{s+1}\left(\frac{q^{s+1} - 1}{q - 1} + 1\right),\\
  k       &= q^s \left( \frac{q^{s+1} - 1}{q - 1}\right),\\
  \lambda &= q^s\left(\frac{q^s - 1}{q - 1} \right).
 \end{align*}
\end{thm}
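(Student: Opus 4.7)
The strategy is to verify the group-ring identity
\[ DD^{(-1)} = (k-\lambda)\cdot 1_G + \lambda \cdot G \]
in $\Z[G]$, which characterizes $D$ as a $(v,k,\lambda)$-difference set. Since $G = E \times K$ with $E$ abelian, I would write $D = \sum_{i=1}^r (H_i + e_i) k_i$, treating $H_i + e_i$ as a translate of the subgroup $H_i$ inside $\Z[E]$ and using that the $E$- and $K$-parts of $G$ commute. Because $H_i^{(-1)} = H_i$, one has $(H_i + e_i)^{(-1)} = H_i - e_i$.

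Expanding $DD^{(-1)}$ and splitting into diagonal ($i=j$) and off-diagonal ($i\ne j$) terms: the diagonal collapses to $\sum_i H_i^2 \cdot 1_K = q^s \sum_i H_i$, which I would evaluate via the standard hyperplane incidence count (each nonzero vector of $V$ lies in exactly $(q^s-1)/(q-1)$ of the $r$ hyperplanes while $0$ lies in all $r$), yielding
\[ \sum_{i=1}^r H_i = \tfrac{q^s-1}{q-1}\, E + q^s \cdot 1_E. \]
For distinct hyperplanes $H_i, H_j$ we have $|H_i \cap H_j| = q^{s-1}$ and $H_i + H_j = E$, so $H_iH_j = q^{s-1}E$ in $\Z[E]$; since translating $E$ by any element of $E$ fixes $E$, this gives $(H_i + e_i)(H_j - e_j) = q^{s-1} E$ regardless of the translates. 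On the $K$-side, if $k_0$ denotes the unique element of $K$ not in $\{k_1,\dots,k_r\}$, then $\sum_i k_i = K - k_0$ and $\sum_j k_j^{-1} = K - k_0^{-1}$, whence
\[ \sum_{i \ne j} k_i k_j^{-1} = (K-k_0)(K-k_0^{-1}) - r = (r-1)(K - 1_K), \]
a computation that uses only $|K| = r+1$ and does not rely on any commutativity of $K$.

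Assembling the pieces gives $DD^{(-1)} = q^s \sum_i H_i + q^{s-1}(r-1)\, E(K - 1_K)$; the coefficient of the mixed term $E \cdot 1_K$ then cancels via the identity $r - 1 = q(q^s-1)/(q-1)$, leaving $q^{2s} \cdot 1_G + q^{s-1}(r-1) G$. A short arithmetic check confirms $q^{2s} = k-\lambda$ and $q^{s-1}(r-1) = \lambda$, finishing the proof. The main obstacle is purely algebraic bookkeeping --- tracking which contributions land in $1_G$, $E \cdot 1_K$, and $G$, and verifying the cancellation of the $E \cdot 1_K$ term --- with no deep geometric input beyond the hyperplane incidence count and no assumption on $K$ beyond its order, which is exactly the feature of McFarland's construction that the combinatorial transfer framework of Section \ref{sect:McFarland} will exploit.
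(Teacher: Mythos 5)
Your proposal is correct: the group-ring expansion, the diagonal identity $\sum_i H_i = \tfrac{q^s-1}{q-1}E + q^s\cdot 1_E$, the off-diagonal evaluation $H_iH_j = q^{s-1}E$, the $K$-side count $(K-k_0)(K-k_0^{-1}) - r\cdot 1_K = (r-1)(K-1_K)$ (valid without commutativity of $K$), and the final cancellation using $r-1 = q(q^s-1)/(q-1)$ all check out, giving $DD^{(-1)} = q^{2s}\cdot 1_G + \lambda G$ with $q^{2s} = k-\lambda$. The paper itself gives no proof of this statement --- it is quoted from McFarland's 1973 paper --- and your argument is the standard group-ring verification of that construction, so there is nothing in the paper to contrast it with.
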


These results were generalized by Dillon, who was able to significantly weaken the necessary conditions of Theorem \ref{thm:McFarland}.

\begin{thm}\cite[Section 2, Theorem]{Dillon_1985}
 \label{thm:DillonMcFarland}
 Let $q$ be a prime power and let $s$ be any positive integer.  Let $E$ be an elementary abelian group of order $q^{s+1}$ (which we regard as vector space of dimension $s+1$ over $\GF(q)$), and let $H_1, \dots, H_r$, $r = (q^{s+1} - 1)/(q - 1)$, be the $s$-dimensional subspaces of $E$.  Let $G$ be a group of order $q^{s+1}(r+1)$ which contains $E$ as a normal subgroup and let $g_1, \dots, g_r$ be elements of $G$ lying in distinct cosets of $E$. Let 
 \[ D \colonequals \bigcup_{i = 1}^r g_i H_i.\]
 If the map 
 \[H_i \mapsto g_i H_i g_i^{-1} \]
 is a permutation of the hyperplanes of $E$, then $D$ is a difference set with McFarland parameters.
\end{thm}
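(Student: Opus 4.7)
The plan is to prove that $D$ is a difference set by verifying the group-ring identity
\[ D D^{(-1)} = k - \lambda + \lambda G \]
in $\Z[G]$, where $k = q^s r$ and $\lambda = q^s (q^s-1)/(q-1)$ are the McFarland parameters. Writing $D = \sum_{i=1}^r g_i H_i$ (with $H_i$ identified with the sum of its elements in $\Z[E] \subseteq \Z[G]$) and using that each $H_i$ is a subgroup so $H_i^{-1} = H_i$, we have $D^{(-1)} = \sum_i H_i g_i^{-1}$, and therefore
\[ D D^{(-1)} = \sum_{i,j} g_i H_i H_j g_j^{-1}. \]
I will split this double sum into the diagonal $i = j$ and off-diagonal $i \neq j$ contributions, and show each simplifies using a standard fact about hyperplanes plus the permutation hypothesis.

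For the diagonal, I will use $H_i^2 = q^s H_i$ to get $\sum_i g_i(q^s H_i)g_i^{-1} = q^s \sum_i (g_i H_i g_i^{-1})$. Now the permutation hypothesis enters decisively: since $H_i \mapsto g_i H_i g_i^{-1}$ is a bijection on the set of hyperplanes, $\sum_i g_i H_i g_i^{-1} = \sum_i H_i$. A short count (each nonzero vector of $E$ lies in exactly $(q^s-1)/(q-1)$ hyperplanes, while $0$ lies in all $r$) gives
\[ \sum_{i=1}^r H_i = q^s \cdot 1 + \tfrac{q^s-1}{q-1}\, E, \]
so the diagonal contributes $q^{2s} + q^s \cdot \tfrac{q^s-1}{q-1}\, E$.

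For the off-diagonal terms, I will use that distinct hyperplanes intersect in a codimension-$2$ subspace of $E$, so $H_i H_j = q^{s-1} E$ whenever $i \neq j$. Since $E \lhd G$, conjugation by any element of $G$ stabilizes $E$, and multiplying from either side by $E$ collapses everything to coset sums: $E \cdot g_i g_j^{-1}$ is the coset of $E$ in $G$ containing $g_i g_j^{-1}$. Projecting to $K = G/E$ and using that the images $\bar g_1, \ldots, \bar g_r$ are precisely $K \setminus \{\bar h_0\}$ for some coset $\bar h_0$, a direct computation with $(K - \bar h_0)(K - \bar h_0)^{-1}$ in $\Z[K]$ shows
\[ \sum_{i \neq j} \bar g_i \bar g_j^{-1} = (r-1)(K - 1_K), \]
so the off-diagonal contribution is $q^{s-1}(r-1)(G - E)$. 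Adding everything,
\[ D D^{(-1)} = q^{2s} + \left( q^s \tfrac{q^s-1}{q-1} - q^{s-1}(r-1) \right) E + q^{s-1}(r-1)\, G. \]

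The final step is a routine algebraic simplification: one checks $q^{s-1}(r-1) = q^s(q^s-1)/(q-1) = \lambda$, which kills the $E$ coefficient, and that $q^{2s} = k - \lambda$. This gives $DD^{(-1)} = (k-\lambda) + \lambda G$, as required. The main obstacle I anticipate is purely bookkeeping: keeping the non-commutativity of $G$ under control throughout, and in particular recognizing that the permutation hypothesis is exactly what is needed to replace the conjugated hyperplane sum $\sum_i g_i H_i g_i^{-1}$ by the untwisted sum $\sum_i H_i$; without it the diagonal term need not interact cleanly with the off-diagonal correction. Everything else reduces to the two standard hyperplane identities $H_i^2 = q^s H_i$ and $H_i H_j = q^{s-1} E$ for $i \neq j$, together with the coset-counting identity in $\Z[K]$.
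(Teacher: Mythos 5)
The paper does not prove this theorem itself---it is quoted from Dillon's 1985 paper---so there is no internal proof to compare against; your group-ring verification is correct and is essentially Dillon's original argument. The bookkeeping all checks out: the two hyperplane identities, the use of the permutation hypothesis to replace $\sum_i g_iH_ig_i^{-1}$ by $\sum_i H_i$, the coset computation in $\Z[G/E]$ giving $(r-1)(\underline{K}-1)$, and the final arithmetic $q^{s-1}(r-1)=\lambda$, $q^{2s}=k-\lambda$ are all correct (the only implicit step, that the cosets $g_iH_i$ are pairwise disjoint so that $D$ really is the group-ring element $\sum_i g_iH_i$, is immediate since the $g_i$ lie in distinct cosets of $E$).
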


Drisko \cite{Drisko_1998} was further able to provide conditions guaranteeing in certain instances the existence of the coset representatives $g_i$ that are necessary for Theorem \ref{thm:DillonMcFarland}:

\begin{cor}\cite[Corollary 8]{Drisko_1998}
 \label{cor:Drisko}
 Suppose $q$ is a prime, $r = (q^{s+1} - 1)/(q - 1)$, and $r + 1$ is a prime power.  Then any group $G$ of order $q^{s+1}(r+1)$ which has a normal elementary abelian subgroup $E$ of order $q^{s+1}$ has a difference set with McFarland parameters.
\end{cor}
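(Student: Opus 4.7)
The plan is to invoke Dillon's Theorem~\ref{thm:DillonMcFarland}. By that result it suffices to exhibit coset representatives $g_1,\ldots,g_r\in G$ of the $r$ non-identity cosets of $E$ in $G$, together with a labeling of the hyperplanes $H_1,\ldots,H_r$ of $E$, so that the assignment $H_i\mapsto g_iH_ig_i^{-1}$ is a permutation of $\{H_1,\ldots,H_r\}$. Because $E$ is abelian and normal, conjugation by $g_i$ restricted to $E$ depends only on the coset $g_iE$, so the question becomes purely about the action of the quotient $P:=G/E$ (of order $r+1=p^k$) on the set $\Omega$ of hyperplanes of $E$. Explicitly, one seeks a bijection $\phi\colon P\setminus\{1\}\to\Omega$ such that $g\mapsto g\cdot\phi(g)$ is also a bijection from $P\setminus\{1\}$ to $\Omega$.

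The key structural observation is that $\gcd(|P|,|\Omega|)=\gcd(p^k,p^k-1)=1$, and since $P$ is a $p$-group, every $P$-orbit on $\Omega$ has size a power of $p$; summing these sizes to $|\Omega|=p^k-1$ and reducing modulo $p$ forces the number of $P$-fixed hyperplanes to be $\equiv -1\pmod p$, and so in particular at least $p-1$. I would build $\phi$ by induction on $k$, using a normal series $1=P_0\trianglelefteq P_1\trianglelefteq\cdots\trianglelefteq P_k=P$ with $|P_i|=p^i$, which exists because $P$ is a $p$-group. At each step the $p^i-p^{i-1}$ new group elements in $P_i\setminus P_{i-1}$ must be matched to an equal number of hyperplanes in a way that preserves both the injectivity of $\phi$ and the injectivity of $g\mapsto g\cdot\phi(g)$, using the refined $P_i$-orbit structure on $\Omega$.

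The main obstacle — and where I expect Drisko's original argument does its real work — is verifying that the inductive matching never gets stuck, which amounts to checking Hall's marriage condition on a bipartite graph whose edges record admissible pairs $(g,H)$ (those with $H$ not yet used as either $\phi(g')$ or $g'\cdot\phi(g')$ for some earlier $g'$). The orbit-size restrictions, together with the forced abundance of fixed points, should make this feasible via a case analysis distinguishing orbits lying inside $P_{i-1}$-fixed points from orbits that become ``merged'' when passing from $P_{i-1}$ to $P_i$. Once such a bijection $\phi$ is produced, one lifts the $r$ non-identity cosets to any group elements $g_1,\ldots,g_r\in G$ and sets $H_i:=\phi(g_iE)$; then $g_iH_ig_i^{-1}=(g_iE)\cdot\phi(g_iE)$ traces out a permutation of the hyperplanes as $i$ varies, fulfilling the hypothesis of Theorem~\ref{thm:DillonMcFarland} and producing the desired McFarland difference set $D=\bigcup_{i=1}^r g_iH_i$.
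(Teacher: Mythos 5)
You have set up the reduction correctly, and it is the same reduction the cited source uses: since $E$ is abelian and normal, conjugation on $E$ factors through $P=G/E$, which has prime-power order $r+1=p^k$, and Theorem \ref{thm:DillonMcFarland} turns the corollary into the purely combinatorial task of finding $r$ distinct cosets and a bijection $\phi$ onto the hyperplanes with $g\mapsto g\cdot\phi(g)$ again bijective. (Be aware that the paper itself offers no proof of this statement at all --- it is quoted from Drisko, who obtains precisely this combinatorial statement from a nontrivial transversal-type existence theorem.) Your supporting observations are also fine as far as they go: orbit sizes are $p$-powers, the number of $P$-fixed hyperplanes is $\equiv -1\pmod p$, and a normal series $1=P_0\trianglelefteq\cdots\trianglelefteq P_k=P$ exists. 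Restricting to the non-identity cosets is an unforced but harmless choice.

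The genuine gap is that the proof stops exactly where the content lies. The required bijection is an orthomorphism-type object: the two injectivity constraints (on $\phi$ and on $g\mapsto g\cdot\phi(g)$) couple every choice to every other, so your level-by-level scheme is not a sequence of independent matching problems --- the ``admissible pairs'' bipartite graph at stage $i$ depends on all earlier choices, and you give no argument that Hall's condition holds at any stage, nor that some completion of earlier stages exists for which it does. Saying the orbit restrictions and the abundance of fixed points ``should make this feasible via a case analysis'' is a plausibility claim, not an argument; existence results of this flavor (compare the Hall--Paige problem for complete mappings) are known to be delicate, and the prime-power hypothesis must enter in an essential structural way, which your sketch never exploits beyond the existence of a normal series. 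As you yourself note, this step is where Drisko's argument does its real work; since it is exactly the theorem being claimed, the proposal as written is a correct reformulation plus an unproven core, not a proof.
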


\begin{rem}
It is likely that many McFarland DSs constructed using Theorem \ref{thm:transfer} from a group $G = E \times K$, where $E$ is elementary abelian, also can be shown to exist using the results of Dillon (Theorem \ref{thm:DillonMcFarland}) and Drisko (Corollary \ref{cor:Drisko}).  On the other hand, in the new groups created by Theorem \ref{thm:transfer}, it may not be obvious from the outset that the coset representatives necessary to apply Theorem \ref{thm:DillonMcFarland} exist, and it is also possible that the new groups created by Theorem \ref{thm:transfer} do not contain an elementary abelian subgroup of order $q^{s+1}$ and will be genuinely new.  
\end{rem}

With this discussion in mind, we present some constructions, some of which may also follow from Theorem \ref{thm:DillonMcFarland}.

\subsection{Constructions for even values of $q$}
\label{subsect:mcfareven}

In this subsection, we let $q = 2^d$ for some positive integer $d$.  We fix $s = 1$, $V = \GF(q)^2$, $r = q + 1$, $E = (V,+) \cong C_2^{2d}$, and $K = C_{r+1} = C_{q+2}$.  Then, 
\[G = E \times K \cong C_2^{2d} \times C_{q+2}.\]

Define an automorphism $\phi$ of $G$ as follows: if $E$ is identified with $V = \GF(q)^2$ and $K$ is viewed additively, then define
\[ \left((a,b), x\right)^\phi \colonequals \left((b,a), -x \right).\]
It is clear that $\phi$ is an order $2$ automorphism.  Note that $\langle (1,1) \rangle$ is the unique $\GF(q)$-hyperplane of $V$ fixed by swapping the coordinates.  Moreover, $(q + 2)/2 = q/2 + 1$ is odd, so $C_{q+2}$ contains a unique order $2$ element $u$ that will be fixed under the mapping $x \mapsto -x$.  

By Theorem \ref{thm:McFarland}, we may assign the nonidentity elements of $K$ to the hyperplanes $H_i$ of $E$ however we want, and we will obtain a difference set with McFarland parameters.  We choose the following assignment: first, if $H_1 = \langle (1,1) \rangle$, we choose $k_1 = u$; next, we order the other $H_i$ so that, if $H_i = \langle (a,b) \rangle$, then $H_{r+ 2 - i} = \langle (b,a) \rangle$; finally, we order the remaining $k_i$ so that, if $k_i = x$, then $k_{r + 2 - i} = -x$.  Define
\[ D \colonequals \bigcup_{i = 1}^r (H_i, k_i).\]
Then, by Theorem \ref{thm:McFarland}, $D$ is a DS with McFarland parameters, and our choice of $H_i$ and $k_i$ shows that $D$ is invariant under $\phi$, since $(H_1, k_1)^\phi = (H_1, k_1)$ and $(H_i, k_i)^\phi = (H_{r+2-i}, k_{r+2-i})$ for $2 \le i \le r$.  We will now use Theorem \ref{thm:transfer} to construct different nonabelian groups containing McFarland DSs.  

For ease of notation, we write elements in $\langle \phi \rangle \ltimes G$ in the form $(a, b, c)$, where $a \in \langle \phi \rangle$, $b \in E$, and $c \in K$.  We will also denote the identity element of $E = (V,+)$ simply by $0$.  (We will use similar notation when referring to elements of $G$, and we will use notation along these lines for the rest of Section \ref{sect:McFarland}.)

\begin{thm}
 \label{thm:evennonabMcF1}
 Let $G = E \times K$, $D$, and $\phi$ be as above.  Let $e_1, \dots e_{2d}$ be generators for $E$, and let $K = \langle u, y \rangle$, so that $|y| = (q+2)/2$.  Define
 \[ \calG \colonequals \langle (\phi, 0, u), (1, 0, y), (1, e_i, 0): 1 \le i \le 2d \rangle.\]
 Then, $\calG$ is a nonabelian group containing a DS with McFarland parameters.
\end{thm}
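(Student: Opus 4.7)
The plan is to invoke Theorem~\ref{thm:transfer} with the group $G = E \times K$, the McFarland DS $D$ constructed above, and the subgroup $\calG \le \langle \phi \rangle \ltimes G$ as defined. The fact that $\phi \in \Aut(G)_D$ was already arranged by the careful pairing of hyperplanes $H_i, H_{r+2-i}$ and elements $k_i, k_{r+2-i}$, so the only task is to check conditions (i)--(iii) of Theorem~\ref{thm:transfer} and then confirm that $\calG$ is nonabelian.

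First I would set $X \colonequals E \times \langle y \rangle$, a subgroup of index $2$ in $G$. The key computation is $(\phi, 0, u)^2 = (\phi^2, (0,u)^\phi (0,u)) = (1, 0, -u+u) = (1, 0, 0)$, using that $u$ is the unique involution of $K$. Because $(q+2)/2 = 2^{d-1}+1$ is odd for $d \ge 2$, the involution $u$ does not lie in $\langle y \rangle$, so $(\phi, 0, u) \notin 1 \times G$. The remaining generators produce $1 \times X$ inside $1 \times G$, whence $(1 \times G) \cap \calG = 1 \times X$ and $|\calG| = 2|X| = |G|$, giving (i). For (ii), $X$ is normal in $G$ because $G$ is abelian, and $1 \times X$ is normal in $\calG$ because it has index $2$. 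For (iii), $G/X$ has order $2$ with nontrivial coset $X(0,u)$, and the coset action of Theorem~\ref{thm:transfer} gives $X^{(\phi,0,u)} = X \cdot 0^\phi \cdot (0,u) = X(0,u)$, so $\calG$ is transitive on $[G:X]$. Theorem~\ref{thm:transfer} then supplies a DS in $\calG$ with the same (McFarland) parameters as $D$.

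To confirm nonabelianness, I would select a generator $e_i \in E$ not fixed by the coordinate swap underlying $\phi$ (such $e_i$ exist since $\phi|_E$ is not the identity), and directly compute $(\phi,0,u)(1,e_i,0) = (\phi, e_i, u)$ versus $(1,e_i,0)(\phi,0,u) = (\phi, e_i^\phi, u)$, which differ. The main obstacle is purely bookkeeping with the semidirect-product rule; the only subtle point to watch is the degenerate case $d = 1$, where $(q+2)/2 = 2$ makes the description $K = \langle u, y \rangle$ inconsistent with $K \cong C_4$, so the statement should be read as requiring $d \ge 2$. Beyond this, no ideas beyond a clean application of Theorem~\ref{thm:transfer} are required.
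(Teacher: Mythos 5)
Your proposal is correct and follows essentially the same route as the paper's proof: you take $X = E \times \langle y \rangle$ (the paper's $X = \langle (1,0,y),(1,e_i,0)\rangle$), verify conditions (i)--(iii) of Theorem~\ref{thm:transfer} via $(\phi,0,u)^2=(1,0,0)$, normality of the index-$2$ subgroup $1\times X$, and $X^{(\phi,0,u)}=X+(0,u)$, and then exhibit noncommutativity by comparing $(\phi,0,u)(1,e_i,0)$ with $(1,e_i,0)(\phi,0,u)$ for a generator moved by the coordinate swap, exactly as the paper does with $e_1=(1,0)$, $e_2=(0,1)$. Your side remark that the setup tacitly requires $d\ge 2$ (so that $(q+2)/2$ is odd and $K=\langle u\rangle\times\langle y\rangle$) is consistent with the paper's standing assumption in Subsection~\ref{subsect:mcfareven} and does not affect the argument.
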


\begin{proof}
 Note that, if \[X = \langle (1, 0 , y), (1, e_i, 0): 1 \le i \le 2d \rangle,\] then $1 \times X = (1 \times G) \cap \calG$ and $X^\phi = X$.  Moreover, $(\phi,0 ,u)^2 = (1,0,0)$ and
 \[ (\phi,0,u)^{-1}(1 \times X)(\phi,0,u) = 1 \times X\]
 so $|\calG| = |G|$ and $1 \times X$ is a normal subgroup of $\calG$.  Finally, $G = X \cup X + (0,u)$, and
 \[ X^{(\phi,0,u)} = X^\phi + (0,u) = X + (0,u),\] so $\calG$ is transitive on the cosets of $X$ in $G$.  Thus, by Theorem \ref{thm:transfer}, $\calG$ contains a DS with McFarland parameters.  Finally, if $e_1 = (1,0)$ and $e_2 = (0,1)$, then
 \[ (\phi, 0, u) \cdot (1, e_1, 0) = (\phi, e_1, u),\]
 whereas
 \[ (1, e_1, 0) \cdot (\phi, 0, u) = (\phi, e_2, u),\]
 so $\calG$ is nonabelian.
\end{proof}

\begin{rem}
 If $\calG$ is the nonabelian group constructed in Theorem \ref{thm:evennonabMcF1} and $K$ is the dihedral group of order $r+2$, then $\calG \cong E \rtimes K$, and $\calG$ can be seen to have a McFarland DS by applying Theorem \ref{thm:DillonMcFarland}.
\end{rem}

\begin{thm}
 \label{thm:evennonabMcF2}
 Let $G = E \times K$, $D$, and $\phi$ be as above.  Let $e_1, \dots, e_{2d}$ be generators for $E$, where $e_1 = (1,0)$ and $e_2 = (0,1)$, and let $K = \langle u, y \rangle$, so that $|y| = (q+2)/2$.  Define
 \[ \calG \colonequals \langle (\phi, e_1,0), (1, 0, y), (1, 0, u), (1, e_1 + e_2 ,0), (1, e_i, 0): 3 \le i \le 2d \rangle.\]
 Then, $\calG$ is a nonabelian group containing a DS with McFarland parameters.
\end{thm}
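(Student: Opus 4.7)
The plan is to apply Theorem~\ref{thm:transfer} in the spirit of the proof of Theorem~\ref{thm:evennonabMcF1}, but now with a hyperplane of $E$ playing the role previously played by $E$ itself. The first step is to choose the basis $e_3,\ldots,e_{2d}$ of $E$ to be $\phi$-invariant as a set: if $\alpha$ is a primitive element of $\GF(q)$, take $e_{2i-1} := (\alpha^{i-1},0)$ and $e_{2i} := (0,\alpha^{i-1})$ for $i = 2,\ldots,d$, so that $\phi$ swaps $e_{2i-1}$ with $e_{2i}$. Set
\[
X_E := \langle e_1+e_2,\, e_3,\ldots,e_{2d}\rangle, \qquad X := X_E \oplus K.
\]
By construction $X_E$ is a hyperplane of $E$ stabilized setwise by $\phi$, and consequently $X$ is $\phi$-stable (since $K^\phi=K$); also $|X|=|G|/2$.

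The crucial calculation is
\[
(\phi,e_1,0)^2 = (1,\, e_1^\phi + e_1,\, 0) = (1,\, e_1+e_2,\, 0) \in 1\times X.
\]
Combined with the fact that the remaining generators of $\calG$ lie in $1\times X$, this will yield $\calG = \langle (\phi,e_1,0)\rangle \cdot (1\times X)$, $(1\times G)\cap\calG = 1\times X$, and hence $|\calG|=2|X|=|G|$. Normality of $1\times X$ in $\calG$ follows either from the index-$2$ observation or directly from
\[
(\phi,e_1,0)^{-1}(1,x,k)(\phi,e_1,0) = (1,\, x^\phi,\, -k) \in 1\times X,
\]
using $X_E^\phi = X_E$ and $-K = K$. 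For hypothesis (iii) of Theorem~\ref{thm:transfer}, note $[G:X] = \{X,\, X+(e_1,0)\}$, and since $e_1 \notin X_E$ we have $X^{(\phi,e_1,0)} = X^\phi + (e_1,0) = X+(e_1,0) \neq X$, so $\calG$ is transitive on $[G:X]$. Theorem~\ref{thm:transfer} then delivers a DS in $\calG$ with McFarland parameters.

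Finally, nonabelianness is witnessed by
\[
(\phi,e_1,0)(1,0,y) = (\phi, e_1, y) \neq (\phi, e_1, -y) = (1,0,y)(\phi,e_1,0),
\]
where the inequality uses $y \neq -y$; the latter holds because $|y| = (q+2)/2$ is odd and at least $3$ under the standing assumption of this subsection that $q \geq 4$.

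The main obstacle is arranging $X_E$ to be genuinely $\phi$-stable. For a generic choice of basis $e_3,\ldots,e_{2d}$ the span $\langle e_1+e_2,\, e_3,\ldots,e_{2d}\rangle$ need not be invariant under $\phi$, in which case $(1\times G) \cap \calG$ could be strictly larger than the subgroup generated by the obvious generators, and the count $|\calG|=|G|$ together with the index-$2$ structure of $\calG$ over the intersection would break down. The $\phi$-compatible basis chosen above removes this obstruction, after which all remaining verifications are routine bookkeeping in the semidirect product $\Aut(G)_D \ltimes G$.
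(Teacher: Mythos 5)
Your proof is correct and follows essentially the same route as the paper's: the same subgroup $X$ (the hyperplane $\langle e_1+e_2, e_3,\dots,e_{2d}\rangle$ of $E$ together with $K$), the same verification of hypotheses (i)--(iii) of Theorem \ref{thm:transfer} via $(\phi,e_1,0)^2=(1,e_1+e_2,0)$ and $X^{(\phi,e_1,0)}=X+(e_1,0)$, and the same commutator with $(1,0,y)$ to witness nonabelianness. Your one addition, choosing $e_3,\dots,e_{2d}$ in coordinate-swapped pairs so that $X$ is genuinely $\phi$-stable, is a sensible refinement: the paper simply asserts $X^\phi=X$, which indeed requires the completion of $e_1,e_2$ to a basis to be chosen compatibly with $\phi$.
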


\begin{proof}
 Note that, if \[X = \langle (1, 0, y), (1, 0, u), (1, e_1 + e_2 ,0), (1, e_i, 0): 3 \le i \le 2d \rangle,\] then $1 \times X = (1 \times G) \cap \calG$ and $X^\phi = X$.  Moreover, \[(\phi,e_1,0)^2 = (\phi^2, e_2 + e_1, 0) = (1, e_1 + e_2 ,0) \in X\] and
 \[ (\phi, e_1 ,0)^{-1}(1 \times X)(\phi, e_1, 0) = 1 \times X\]
 so $|\calG| = |G|$ and $1 \times X$ is a normal subgroup of $\calG$.  Furthermore, $G = X \cup X + (e_1,0)$, and
 \[ X^{(\phi, e_1, 0)} = X^\phi + (e_1,0) = X + (e_1,0),\] so $\calG$ is transitive on the cosets of $X$ in $G$.  Thus, by Theorem \ref{thm:transfer}, $\calG$ contains a DS with McFarland parameters.  Finally, 
 \[ (\phi, e_1,0) \cdot (1, 0 ,y) = (\phi, e_1 ,y),\]
 whereas
 \[ (1, 0,y) \cdot (\phi, e_1 ,0) = (\phi, e_1 ,-y),\]
 so $\calG$ is nonabelian.
\end{proof}

\begin{rem}
 The group $\calG$ does contain a normal, elementary abelian group
 \[ \calE = \langle (1, 0, u), (1, e_1 + e_2,0), (1, e_i, 0): 3 \le i \le 2d\rangle \]
 of order $q^2$.  It is possible that the coset representatives of $\calE$ in $\calG$ satisfy the hypotheses of Theorem \ref{thm:DillonMcFarland}, but this is not immediately clear to the authors.
\end{rem}

Finally, we modify the above constructions to create a group that does not have a normal, elementary abelian subgroup of order $q^2$.  As above, we let $V = \GF(q)^2$, where $q = 2^d$, and $E = (V,+)$.  Define $K' = \langle y, u \rangle \colonequals D_{(q+2)/2}$, the dihedral group of order $q+2$, where $y^{\frac{q+2}{2}} = u^2 = u^{-1}xux = 1$.  Now, we define
\[ G' \colonequals E \times K'.\]

Define an automorphism $\psi$ of $G'$ by 
\[ ((a,b), x)^\psi = ((b,a), u^{-1}xu).\]  Indeed, $\psi$ is an order $2$ automorphism of $G'$.  As in the previous cases, $\langle (1,1) \rangle$ is the unique $\GF(q)$-hyperplane of $V$ fixed by swapping the coordinates, and, since $(q+2)/2$ is odd, the only nonidentity element of $K'$ fixed under conjugation by $u$ is $u$ itself.

Proceeding as before, we may assign the nonidentity elements of $K$ to the hyperplanes $H_i$ of $E$ however we want, and we will obtain a difference set with McFarland parameters.  We choose the following assignment: first, if $H_1 = \langle (1,1) \rangle$, we choose $k_1 = u$; next, we order the other $H_i$ so that, if $H_i = \langle (a,b) \rangle$, then $H_{r+ 2 - i} = \langle (b,a) \rangle$; finally, we order the remaining $k_i$ so that, if $k_i = x$, then $k_{r + 2 - i} = u^{-1}xu$.  Define
\[ D' \colonequals \bigcup_{i = 1}^r (H_i, k_i).\]
Then, by Theorem \ref{thm:McFarland}, $D'$ is a DS with McFarland parameters, and our choice of $H_i$ and $k_i$ shows that $D'$ is invariant under $\psi$, since $(H_1, k_1)^\psi = (H_1, k_1)$ and $(H_i, k_i)^\psi = (H_{r+2-i}, k_{r+2-i})$ for $2 \le i \le r$.  Note that we will again use coordinate notation similar to that used in Theorems \ref{thm:evennonabMcF1} and \ref{thm:evennonabMcF2}.

\begin{thm}
 \label{thm:evennonabMcF3}
 Let $G' = E \times K'$, $D'$, $y$, $u$, and $\psi$ be as above.  Let $e_1, \dots, e_{2d}$ be generators for $E$, where $e_1 = (1,0)$ and $e_2 = (0,1)$.  Define
 \[ \calG' \colonequals \langle (\psi, e_1,1), (1, 0, y), (1, 0, u), (1, e_1 + e_2, 1), (1, e_i, 1): 3 \le i \le 2d \rangle.\]
 Then, $\calG'$ is a nonabelian group containing a DS with McFarland parameters that does not have a normal Sylow $2$-subgroup and that does not contain a normal elementary abelian subgroup of order $q^2$.
\end{thm}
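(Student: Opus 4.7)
The plan is to apply Theorem~\ref{thm:transfer} with an appropriate index-$2$ subgroup $X \le G'$, and then to rule out both a normal Sylow $2$-subgroup of $\calG'$ and a normal elementary abelian subgroup of order $q^2$ by means of a natural surjection of $\calG'$ onto $K'$. Throughout, I assume $d \ge 2$, so that $(q+2)/2 = 2^{d-1} + 1$ is odd and $K' = D_{(q+2)/2}$ is a non-abelian dihedral group.

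To apply Theorem~\ref{thm:transfer}, I would first choose the basis vectors $e_3, \ldots, e_{2d}$ so that $X$ is $\psi$-invariant: fix a $\GF(2)$-basis $1 = \alpha_1, \alpha_2, \ldots, \alpha_d$ of $\GF(q)$ and set $e_{2k-1} = (\alpha_k, 0)$ and $e_{2k} = (0, \alpha_k)$ for $2 \le k \le d$, so that $\psi$ swaps each pair $\{e_{2k-1}, e_{2k}\}$. Set
\[ X \colonequals \langle (0, y), (0, u), (e_1 + e_2, 1), (e_i, 1) : 3 \le i \le 2d \rangle \le G'. \]
I would then verify: $\calG' \cap (1 \times G') = 1 \times X$; $[G':X] = 2$, so $X \triangleleft G'$; $X$ is $\psi$-invariant, using $(0,y)^\psi = (0, y^{-1})$, $(0,u)^\psi = (0,u)$, and the pairing of the $e_i$; and $(\psi, e_1, 1)^2 = (1, e_1 + e_2, 1) \in 1 \times X$, which gives $|\calG'| = |G'|$ and the normality of $1 \times X$ in $\calG'$. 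Transitivity on $[G':X]$ is immediate from $X^{(\psi, e_1, 1)} = X(e_1, 1) \ne X$, the latter because $e_1 \notin \langle e_1 + e_2, e_3, \ldots, e_{2d} \rangle$. Theorem~\ref{thm:transfer} then delivers a DS of $\calG'$ with McFarland parameters, and $\calG'$ is non-abelian because $(\psi, e_1, 1)(1, 0, y) = (\psi, e_1, y)$ while $(1, 0, y)(\psi, e_1, 1) = (\psi, e_1, y^{-1})$ differ when $y \ne y^{-1}$.

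The structural claims rest on the map $\pi: \calG' \to K'$ defined by $\pi(\psi^j, v, k) \colonequals u^j k$. Using that $\psi$ acts on $K'$ as conjugation by $u$, I would check that $\pi$ is a group homomorphism; it is surjective because the images of $(1,0,y)$ and $(1,0,u)$ already generate $K'$, and $|\ker \pi| = |G'|/|K'| = q^2$. Since $(q+2)/2$ is odd with $d \ge 2$, the $2$-part of $|\calG'|$ is $2^{2d+1}$, and every normal $2$-subgroup of the dihedral group $K' = D_{(q+2)/2}$ is trivial. Hence if $S$ were a normal Sylow $2$-subgroup of $\calG'$, then $\pi(S) = 1$ would force $S \le \ker \pi$, contradicting $|S| = 2^{2d+1} > 2^{2d} = |\ker \pi|$. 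Similarly, if $A \triangleleft \calG'$ were elementary abelian of order $q^2$, then $\pi(A) = 1$ forces $A = \ker \pi$ by matching orders; but $\ker \pi$ contains $(\psi, e_1, u) = (\psi, e_1, 1)(1, 0, u)$, and a direct computation using $u^\psi = u$ gives
\[(\psi, e_1, u)^2 = (1, e_1^\psi + e_1, u \cdot u) = (1, e_1 + e_2, 1) \ne 1_{\calG'},\]
so $\ker \pi$ has an element of order $4$ and is not elementary abelian.

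The main obstacle I anticipate is identifying the correct projection $\pi$ and verifying that it is a well-defined homomorphism on $\langle \psi \rangle \ltimes G'$, which requires the crucial compatibility between the action of $\psi$ on $K'$ and conjugation by $u$ inside $K'$. Once $\pi$ is in place, both non-existence statements drop out quickly from the standard normal-subgroup structure of odd-order dihedral groups, and the remaining verifications for Theorem~\ref{thm:transfer} are routine bookkeeping involving the pairing of the chosen basis for $E$.
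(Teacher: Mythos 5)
Your proposal is correct, and its first half coincides with the paper's: the same index-$2$ subgroup $X = W \times K'$ with $W = \langle e_1+e_2, e_3, \dots, e_{2d}\rangle$, the same verification of conditions (i)--(iii) of Theorem~\ref{thm:transfer} via $(\psi,e_1,1)^2 = (1,e_1+e_2,1)$ and $X^{(\psi,e_1,1)} = X(e_1,1)$, and a commutation check for nonabelianness. Two of your precautions are actually improvements in rigor: choosing $e_3,\dots,e_{2d}$ in $\psi$-swapped pairs is needed for the assertion $X^\psi = X$ (for an arbitrary generating set of $E$ this can fail, and the paper leaves the choice implicit), and your hypothesis $d \ge 2$ matches the subsection's implicit requirement that $(q+2)/2$ be odd (the statement is false for $d=1$, when $\calG'$ is a $2$-group). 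Where you genuinely diverge is in the structural claims. The paper works inside an explicit Sylow $2$-subgroup: it exhibits $\calQ' = \langle (1,0,u), (\psi,e_1,1), (1,e_i,1)\rangle$, identifies $\calE' = \langle (1,0,u), (1,e_1+e_2,1), (1,e_i,1)\rangle$ as the unique elementary abelian subgroup of order $q^2$ therein (the phrase ``in $\calG'$'' should be read as ``in $\calQ'$,'' since genuine uniqueness in $\calG'$ would make $\calE'$ characteristic), and then shows $(1,0,y)^{-1}(1,0,u)(1,0,y) = (1,0,uy^2) \notin \calQ'$, so neither subgroup is normal. You instead use the surjection $\pi\colon \calG' \to K'$, $(\psi^j,v,k) \mapsto u^jk$, which is a homomorphism precisely because $\psi$ acts on $K'$ as conjugation by $u$, together with the standard fact that a dihedral group of order twice an odd number has no nontrivial normal $2$-subgroup: any normal Sylow $2$-subgroup or normal elementary abelian subgroup of order $q^2$ would be forced into $\ker\pi$, which is too small in the first case and contains the order-$4$ element $(\psi,e_1,u)$ in the second. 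Your quotient argument is cleaner in that it treats both claims uniformly and avoids the paper's uniqueness claim for $\calE'$; the paper's argument is more concrete, exhibiting the specific conjugation that obstructs normality. Both are valid.
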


\begin{proof}
 Note that, if \[X = \langle (1, 0, y), (1, 0, u), (1, e_1 + e_2 ,1), (1, e_i, 1): 3 \le i \le 2d \rangle,\] then $1 \times X = (1 \times G') \cap \calG'$ and $X^\psi = X$.  Moreover, \[(\psi, e_1,1)^2 = (\psi^2, e_2 + e_1, 1) = (1, e_1 + e_2, 1) \in X\] and
 \[ (\psi, e_1,1)^{-1}(1 \times X)(\psi, e_1 ,1) = 1 \times X\]
 so $|\calG'| = |G'|$ and $1 \times X$ is a normal subgroup of $\calG'$.  Furthermore, $G' = X \cup X(e_1,1)$, and
 \[ X^{(\psi, e_1 ,1)} = X^\psi(e_1,1) = X(e_1,1),\] so $\calG$ is transitive on the cosets of $X$ in $G$.  Thus, by Theorem \ref{thm:transfer}, $\calG$ contains a DS with McFarland parameters.  Finally, since a Sylow $2$-subgroup of $\calG'$ has order $2q^2$ and $(\psi, e_1 ,1)$ has order $4$, we see that
 \[ \calQ' = \langle (1, 0, u), (\psi, e_1 ,1), (1, e_i, 1): 3 \le i \le 2d\rangle\]
 is a Sylow $2$-subgroup of $\calG'$, and hence 
 \[ \calE' = \langle (1, 0, u), (1, e_1 + e_2,1), (1, e_i, 1): 3 \le i \le 2d\rangle\]
 is the unique elementary abelian subgroup of order $q^2$ in $\calG'$.  However, 
 \[ (1, 0, y)^{-1}(1, 0, u)(1, 0, y) = (1, 0, uy^2) \notin \calQ', \]
 so neither $\calQ'$ nor $\calE'$ is normal in $\calG'$, completing the proof.
\end{proof}

\subsection{A construction for odd values of $q$}

We present another construction, which is similar in spirit to Drisko's (Corollary \ref{cor:Drisko}).

\begin{thm}
\label{thm:oddMcF}
 Let $q$ be an odd prime, $2 \le s < q$.  Define 
 \[ r \colonequals \frac{q^{s+1} - 1}{q - 1}\]
 and assume $r + 1 = 2p$, where $p$ is an odd prime.  Then, there exists a group $\calG$ with a non-normal, nonabelian Sylow $q$-subgroup containing a DS with McFarland parameters.
\end{thm}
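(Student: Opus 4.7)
The plan is to mimic the approach of Subsection \ref{subsect:mcfareven} in odd characteristic, replacing the order-$2$ coordinate swap with an order-$q$ unipotent automorphism on $E$. Let $V = \GF(q)^{s+1}$, $E = (V,+)$, $K = C_{2p}$, and $G = E \times K$. Take $\phi_E := I + N$ on $V$, where $N$ is a single Jordan block of size $s+1$; since $s < q$ and $N^{s+1} = 0$, the characteristic-$q$ expansion gives $\phi_E^q = I + N^q = I$, so $\phi_E$ has order exactly $q$. Its unique invariant hyperplane is $H_1 := \langle e_1, \dots, e_s\rangle$, while the remaining $r - 1$ hyperplanes split into $(q^s - 1)/(q - 1)$ orbits of size $q$. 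Since $r + 1 = 2p$ with $p$ an odd prime forces $r$ odd and hence $s$ even, an elementary computation gives $p - 1 = q(1 + q + \dots + q^{s-1})/2$, which is divisible by $q$; thus $\Aut(K) \cong C_{p-1}$ contains an element $\phi_K$ of order $q$ (multiplication by an odd $a \in \Z/2p\Z$ of order $q$ modulo $p$). Then $\phi_K$ fixes only the involution among nonidentity elements and cycles the remaining $2p - 2$ elements in $(2p-2)/q = (q^s - 1)/(q - 1)$ orbits of size $q$, matching the hyperplane orbit count exactly.

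Next I assign hyperplanes to the nonidentity elements $k_1, \dots, k_r$ of $K$ equivariantly, meaning $\phi_K(k_i) = k_j$ iff $\phi_E(H_i) = H_j$; this is possible precisely because the orbit structures match (pair $H_1$ with the involution, then bijectively pair the $\phi_E$-orbits with the $\phi_K$-orbits of size $q$). With $e_i = 0$ for all $i$, Theorem \ref{thm:McFarland} produces a McFarland DS $D = \bigcup_i (H_i, k_i)$, and the equivariant choice ensures $\phi := (\phi_E, \phi_K) \in \Aut(G)_D$. Now set $X := H_1 \times K$ (of index $q$ in $G$, $\phi$-invariant) and $g_0 := (e_{s+1}, 0)$. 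The key computation for applying Theorem \ref{thm:transfer} to $\calG := \langle 1 \times X, (\phi, g_0)\rangle$ is that the ``norm'' $\sum_{i=0}^{q-1} \phi_E^i$ reduces in characteristic $q$ (via a hockey-stick expansion and $N^{s+1}=0$) to $N^{q-1}$, which sends $e_{s+1}$ to $0$ if $s \le q-2$ and to $e_1 \in H_1$ if $s = q-1$; in both cases $(\phi, g_0)^q \in 1 \times X$. A direct check shows $(\phi, g_0)$ cyclically permutes the cosets $X, X + e_{s+1}, \dots, X + (q-1)e_{s+1}$, so Theorem \ref{thm:transfer} delivers a DS with McFarland parameters in $\calG$.

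Finally I identify $Q := \langle 1 \times (H_1 \times 0), (\phi, g_0)\rangle$ as a Sylow $q$-subgroup of $\calG$; a short index computation using $(\phi, g_0)^q \in 1 \times (H_1 \times 0)$ gives $|Q| = q^{s+1}$. Conjugation by $(\phi, g_0)$ acts on $1 \times (H_1 \times 0)$ via $\phi_E|_{H_1}$, which is the nontrivial Jordan block $e_i \mapsto e_i + e_{i-1}$ on $\langle e_1, \dots, e_s\rangle$; since $s \ge 2$, this action is nontrivial, so $Q$ is nonabelian. On the other hand, conjugating $(\phi, g_0)$ by $(1, (0, k))$ for any $k \in K$ not fixed by $\phi_K$ yields $(\phi, (e_{s+1}, k - \phi_K(k)))$, whose $K$-coordinate is nonzero; since every element of $Q$ has trivial $K$-coordinate, this conjugate lies outside $Q$, proving $Q$ is non-normal. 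The main technical hurdle is the orbit-matching bookkeeping and the characteristic-$q$ norm computation bridging the two subcases $s \le q - 2$ and $s = q - 1$, but both reduce to elementary linear algebra on a single Jordan block.
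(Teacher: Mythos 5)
Your proposal is correct and takes essentially the same route as the paper's proof: an order-$q$ single-Jordan-block unipotent automorphism on $E$ paired with an order-$q$ automorphism of $K \cong C_{2p}$ (via $q \mid p-1$), an orbit-equivariant assignment of hyperplanes to nonidentity elements of $K$ (fixed hyperplane matched to the involution) making the McFarland set $\phi$-invariant, then Theorem \ref{thm:transfer} with $X$ the fixed hyperplane times $K$, and the same nonabelian/non-normal Sylow $q$-subgroup checks. The only divergence is conventional: in your setup the coset transversal generator $e_{s+1}$ is not $\phi_E$-fixed, so you verify $(\phi,g_0)^q \in 1\times X$ by the characteristic-$q$ norm computation in the two cases $s\le q-2$ and $s=q-1$, whereas the paper arranges for the transversal generator to be fixed so that $(\phi,e_{s+1},0)$ has order exactly $q$.
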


\begin{proof}
 First, let $V = \GF(q)^{s+1}$, $E = (V,+)$, $K = (\Z/(r+1)\Z,+) \cong C_{r+1} = C_{2p}$, and $G = E \times K$.  Note that $\Aut(K) \cong C_{p-1}$, and $\Aut(K)$ has exactly four orbits on $K$: $\{0\}$, $\{p\}$, and two orbits of length $p - 1$.  Now, since $2p = r + 1$, we have
 \[ p = \frac{r+1}{2} = \frac{q(q^{s-1} + \cdots + q + 1)}{2} + 1,\]
 meaning $q$ divides $p - 1$.  Hence, there is an automorphism $\sigma$ of $K$ of order $q$ that fixes $0, p \in \Z/2p\Z$ and partitions the remaining elements into orbits of size $q$.
 
 Next, let $e_1, \dots, e_{s+1}$ be a basis for $V$.  Consider the matrix $M = (m_{i,j})$, where $1 \le i,j \le s+1$, with
 \[ m_{i,j} = \begin{cases}
               1, \text{ if } i \le j \le i + 1,\\
               0, \text{ otherwise.}
              \end{cases}\]
That is

 \[ M = \begin{pmatrix}
                    1      & 1      & 0   & \cdots & 0 \\
                    0      & 1      & 1  &  \ddots   & 0\\
                    
                    \vdots    & \ddots &  \ddots  & \ddots & \vdots \\
                   
                    0    &  \cdots & 0  &  1  & 1  \\
                    0    & \cdots    & \cdots  &  0   &   1 
                   \end{pmatrix}.
\]

Now, an induction argument shows that, for $n \ge 1$,  

\[ M^n = \begin{pmatrix}
                    1      & {n \choose 1}      & {n \choose 2}   & \cdots & {n \choose s} \\
                    0      & 1      & {n \choose 1}  &  \ddots   & {n \choose {s-1}}\\
                    
                    \vdots    & \ddots &  \ddots  & \ddots & \vdots \\
                   
                    0    &  \cdots & 0  &  1  & {n \choose 1}  \\
                    0    & \cdots    & \cdots  &  0   &   1 
                   \end{pmatrix},
\]
where we define ${n \choose t}$ to be $0$ when $t > n$.  Since $s < q$, this shows that the matrix $M$ has multiplicative order $q$.  Moreover, we note that $M$ is already in Jordan Canonical Form, and, indeed, if $x = (x_1, \dots, x_{s+1})$ is an eigenvector for $M$, then 
\[(\lambda x_1, \dots, \lambda x_{s+1}) = \lambda x = xM = (x_1, x_1 + x_2, x_2 + x_3, \dots, x_s + x_{s+1}),\]
then $\lambda = 1$ and $x_1 = \cdots = x_s = 0$.  That is, $M$ fixes a unique $1$-dimensional subspace of $V$, and hence $M$ fixes a unique hyperplane $H_1$ of $V$ (and the remaining $r-1$ hyperplanes are partitioned into orbits of length $q$).

We now define an automorphism $\phi$ of $G$ by 
\[ (u, y)^\phi \colonequals (uM, y^\sigma).\] 
It is clear from the construction that $\phi$ has order $q$.  Now, we assign nonidentity elements of $K$ to hyperplanes of $E = (V,+)$ as follows: first, let $H_1$ be the unique hyperplane $\langle e_{s+1} \rangle^\perp$ fixed by $M$ and choose $k_1 = p \in K = (\Z/(r+1)\Z, +) = (\Z/2p\Z, +)$. Next, we note that the remaining $r-1$ hyperplanes are partitioned into $\langle M \rangle$-orbits of size $q$ and the remaining $r-1$ nonidentity elements of $K$ are partitioned into $\langle \sigma \rangle$-orbits of size $q$.  We choose representatives for each $\langle M \rangle$-orbit of size $q$ and for each $\langle \sigma \rangle$-orbit of size $q$, and then we choose any bijection we want between the representatives of orbits of hyperplanes and the representatives of orbits of elements of $K$, and if $H_i \leftrightarrow k_i$, then we extend this to a bijection between hyperplanes and nonidentity elements of $K$ by defining $H_iM^j \leftrightarrow k_i^{\sigma^j}$.  Define
\[ D \colonequals \bigcup_{i = 1}^r (H_i, k_i).\]
Then, by Theorem \ref{thm:McFarland}, $D$ is a DS with McFarland parameters, and our choice of $H_i$ and $k_i$ shows that $D$ is invariant under $\phi$, since $(H_1, k_1)^\phi = (H_1, k_1)$ and 
\[(H_i, k_i)^\phi = (H_iM, k_i^\sigma)\] for $2 \le i \le r$.

Using coordinate notation similar to that used in Subsection \ref{subsect:mcfareven}, define 
\[ \calG \colonequals \langle (1, 0, y), (1, e_i, 0), (\phi, e_{s+1}, 0): y \in K, 1 \le i \le s \rangle.\]
Now, if $X \colonequals (1 \times G) \cap \calG$, then it is clear that
\[ X = \langle (1, 0, y), (1, e_i, 0): y \in K, 1 \le i \le s \rangle\]
and $X^\phi = X$;
moreover, $(\phi, e_{s+1}, 0)$ is an order $q$ element, and 
\[ (\phi, e_{s+1}, 0)^{-1}(1 \times X)(\phi, e_{s+1}, 0) = 1 \times X,\]
so, indeed, $1 \times X$ is normal in $\calG$ and $|\calG| = |G|$.  Next, noting that 
\[ G = \bigcup_{i=0}^{q-1} X(ie_{s+1},0),\] we see that
\[ X^{(\phi, e_{s+1}, 0)^i} = X^{(\phi^i, ie_{s+1},0)} = X^{\phi^i}(ie_{s+1}, 0) = X(ie_{s+1},0),\]
and so $\calG$ is indeed transitive on the cosets of $X$ in $G$.  By Theorem \ref{thm:transfer}, $\calG$ contains a DS with McFarland parameters.

Finally, note that 
\[ \calQ = \langle (\phi, e_{s+1},0), (1, e_i, 0) : 1 \le i \le s \rangle\]
is a Sylow $q$-subgroup of $\calG$.  Since $e_1M \neq e_1$, we have
\[ (\phi, e_{s+1},0) \cdot (1, e_1,0) = (\phi, e_1 + e_{s+1}, 0) \neq (\phi, e_1M + e_{s+1}, 0) = (1, e_1,0) \cdot (\phi, e_{s+1},0),\]
and so $\calQ$ is nonabelian.  Moreover, since 
\[ (1, 0, y)^{-1}(\phi, e_{s+1}, 0)(1,0,y) = (\phi, e_{s+1}, (-y)^\sigma + y) \notin \calQ,\]
we see that $\calG$ does not contain a normal Sylow $q$-subgroup.
\end{proof}

\begin{rem}
 There are $78498$ primes $q$ less than $1000000$, and $q^2 + q + 2 = 2p$, where $p$ is prime, in $5985$ cases, i.e., $5985$ primes less than $1000000$ satisfy the hypotheses of Theorem \ref{thm:oddMcF} with $s = 2$.
\end{rem}

\section{RDSs in nonabelian $2$-groups}
\label{sect:RDS}

Let $q = 2^d$, where $d$ is a positive integer, and define $V \colonequals \GF(q^2)^2.$  Note that $V$ contains $q^2+1$ $\GF(q^2)$-hyperplanes ($1$-dimensional $\GF(q^2)$-subspaces) $H_0, \dots, H_{q^2}$.  Define 
\[ G \colonequals \langle w \rangle \times (V,+) \cong C_{q^2} \times C_2^{4d}.\]  

Consider the automorphism $\phi: G \to G$ defined by
\[ (w^i, (x,y))^\phi \colonequals (w^{-i}, (x^q, y^q));\]
that is, $\phi$ acts as inversion on $\langle w \rangle$ and as a power of the Frobenius map on each coordinate of the elements of $V$.  Clearly, $\phi$ is an order $2$ automorphism.  Note that inversion has $q^2/2 + 1$ orbits on $\langle w \rangle$, namely, $\{1\}$, $\{w^{q^2/2}\}$, and $q^2/2 - 1$ orbits of the form $\{w^i, w^{-i}\}$, $1 \le i \le q^2/2 - 1$.  Moreover, the map $(x,y) \mapsto (x^q, y^q)$ has $q^2/2 + 2$ orbits on hyperplanes, namely, $\{\langle (1,0) \rangle\}$, $\{\langle (0,1) \rangle\}$, $\{\langle (1,1) \rangle\}$, and $q^2/2 - 1$ orbits of the form $\{\langle (1,\alpha^i) \rangle, \langle (1, (\alpha^i)^q) \rangle \},$ where $\alpha$ is a primitive element of $\GF(q^2)$. 

We enumerate the hyperplanes as follows: we define $H_0 \colonequals \langle (1,0) \rangle$, $H_{q^2/2} \colonequals \langle (0,1) \rangle$, $H_{q^2} \colonequals \langle (1,1) \rangle$, and, for the remaining hyperplanes, we assume that $\{H_i, H_{q^2-i}\}$ is an orbit of length two under the map $(x,y) \mapsto (x^q, y^q)$ for each $i$, $1 \le i \le q^2/2 - 1$.

We view the $H_i$ as subgroups of $(V,+)$, and define
\[ R \colonequals \bigcup_{i = 1}^{q^2} (w^i, H_i).\]
Then, $R$ is a $(q^4, q^2, q^4, q^2)$-RDS with respect to the forbidden subgroup $(1, H_0)$.  Moreover, by construction,
\[ (w^{q^2/2}, H_{q^2/2})^\phi = (w^{q^2/2}, H_{q^2/2}),\]
\[ (w^{q^2}, H_{q^2})^\phi = (1, H_{q^2})^\phi = (1, H_{q^2}),\] and, for each $1 \le i \le q^2/2 - 1$, 
\[ (w^i, H_i)^\phi = (w^{q^2 - i}, H_{q^2 - i}),\]
and so $R^\phi = R$.

For ease of notation, we write elements in $\langle \phi \rangle \ltimes G$ in the form $(a, b, c, d)$, where $a \in \langle \phi \rangle$, $b \in \langle w \rangle$, and $(c,d) \in V$.  (We will use similar notation when referring to elements of $G$.)

\begin{thm}
 \label{thm:RDS2gp1}
 Let $q = 2^d$, $V = \GF(q^2)^2$, $\langle w \rangle \cong C_{q^2}$, and $G \cong \langle w \rangle \times (V,+)$.  Then,
 \[ \calG \colonequals \langle (\phi, w, 0,0), (1, w^2,0,0), (1, 1, s, t): (s,t) \in V \rangle \]
 is a nonabelian group containing a $(q^4, q^2, q^4, q^2)$-RDS. 
\end{thm}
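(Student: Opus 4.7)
The plan is to invoke Theorem \ref{thm:RDStransfer} directly. The setup before the statement has already done the hard work: $R$ is an RDS in $G$ with forbidden subgroup $U = \{1\}\times H_0$, and $\phi \in \Aut(G)_R$, so $\calG \le \Aut(G)_R \ltimes G$. It remains to verify the three hypotheses (i)--(iii) of Theorem \ref{thm:RDStransfer}, and then to show that the resulting $\calG$ is nonabelian.

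First I would identify $X := (1 \times G) \cap \calG$. The generators $(1,w^2,0,0)$ and $(1,1,s,t)$ for $(s,t)\in V$ already produce $1 \times (\langle w^2\rangle \times V)$, and a direct computation with the semidirect-product rule $(\phi_1,g_1)(\phi_2,g_2) = (\phi_1\phi_2, g_1^{\phi_2}g_2)$ gives
\[
(\phi,w,0,0)^2 \;=\; \bigl(1,\,(w,0,0)^\phi \cdot (w,0,0)\bigr) \;=\; (1,w^{-1}w,0,0) \;=\; (1,1,0,0),
\]
so $(\phi,w,0,0)$ is an involution and $\calG = (1\times X)\,\sqcup\,(1\times X)(\phi,w,0,0)$ with $X = \langle w^2\rangle \times V$. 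Hence $|\calG| = 2|X| = q^6 = |G|$, giving (i). Normality of $1\times X$ in $1\times G$ is automatic since $G$ is abelian, and normality in $\calG$ reduces to $\phi$-invariance of $X$, which holds because $\phi$ inverts $\langle w\rangle$ (so $\langle w^2\rangle^\phi = \langle w^2\rangle$) and restricts to a field automorphism on each coordinate of $V$ (so $V^\phi = V$); this gives (ii). For (iii), $[G:X] = 2$ with nontrivial coset represented by $(w,0,0)$, and
\[
X^{(\phi,w,0,0)} \;=\; X^\phi \cdot (w,0,0) \;=\; X\cdot(w,0,0),
\]
so $\calG$ is transitive on $[G:X]$. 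Theorem \ref{thm:RDStransfer} then produces a $(q^4,q^2,q^4,q^2)$-RDS in $\calG$ with forbidden subgroup $\calU = \{(\alpha,g)\in\calG : g \in U\}$.

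To finish I would choose any $s \in \GF(q^2)\setminus\GF(q)$ and compute
\[
(\phi,w,0,0)(1,1,s,0) \;=\; (\phi,w,s,0), \qquad (1,1,s,0)(\phi,w,0,0) \;=\; (\phi,(1,s,0)^\phi(w,0,0)) \;=\; (\phi,w,s^q,0),
\]
which differ because $s \ne s^q$; this exhibits $\calG$ as nonabelian. The only real obstacle is bookkeeping: tracking the four-tuple notation through the twisted multiplication and confirming that the elements proposed as generators of $X$ really do exhaust $(1\times G)\cap \calG$ (i.e.\ that no further elements sneak in from products involving $(\phi,w,0,0)$), which is controlled by the order-$2$ computation above. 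Everything else is a direct check against the three hypotheses of Theorem \ref{thm:RDStransfer}.
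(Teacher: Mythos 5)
Your proposal is correct and follows essentially the same route as the paper's proof: identify $X=\langle w^2\rangle\times V$ as $(1\times G)\cap\calG$, use $(\phi,w,0,0)^2=(1,1,0,0)$ and $\phi$-invariance of $X$ to get hypotheses (i)--(iii) of Theorem \ref{thm:RDStransfer}, and exhibit noncommutativity via an element $s\in\GF(q^2)\setminus\GF(q)$. The only difference is cosmetic: you spell out slightly more explicitly why the two cosets of $1\times X$ exhaust $\calG$, which the paper leaves implicit.
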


\begin{proof}
 Let $\calG$ be as in the statement.  First, 
 \[ \calG \cap (1 \times G) = 1 \times X,\]
 where $X = \langle (w^2, 0,0), (1, s, t) : (s,t) \in V \rangle$.  It is clear that $|X| = |G|/2$, and, since 
 \[ (\phi, w, 0,0)^2 = (\phi^2, w^{-1}w, 0, 0)= (1, 1, 0,0)\in 1 \times X,\] we have $|\calG| = |G|$ and $1 \times X$ is normal in $\calG$, and we see that $X^\phi = X$ from the definition of $\phi$.
 Moreover, since $G = X \cup X(w, 0,0)$, we have
 \[ X^{(\phi, w, 0,0)} = X^\phi(w, 0,0) = X(w,0,0),\]
 and hence the action of $\calG$ on right cosets of $X$ in $G$ is transitive.  Therefore, by Theorem \ref{thm:RDStransfer}, $\calG$ contains an RDS $\calR$ with the same parameters as $R$.
 
 Finally, take $\alpha \in \GF(q^2) \backslash \GF(q)$, which implies $\alpha^q \neq \alpha$.  Since
 \[ (\phi, w, 0,0)(1, 1, \alpha,0) = (\phi, w, \alpha, 0)\]
 whereas
 \[ (1, 1, \alpha,0)(\phi, w, 0,0) = (\phi, w, \alpha^q,0),\]
 we see that $\calG$ is nonabelian, completing the proof.
\end{proof}

Define $\tr: \GF(q^2) \to \GF(q)$ to be the trace map, so $\tr(\alpha) = \alpha^q + \alpha$, and, given $t \in V$, define $B(t) \subset V$ to be a subset such that $B(t) \cup \{t\}$ is a $\GF(2)$-basis for $V$. 

\begin{thm}
 \label{thm:RDS2gp2}
 Let $q = 2^d$, $V = \GF(q^2)^2$, $\langle w \rangle \cong C_{q^2}$, and $G \cong \langle w \rangle \times (V,+)$.  If $\beta \in \GF(q^2) \backslash \GF(q)$, then
 \[ \calG \colonequals \langle (\phi, 1, \beta ,0), (1, w,0,0), (1, 1, s, t)): (s,t) \in B((\beta,0)) \rangle \]
 is a nonabelian group containing a $(q^4, q^2, q^4, q^2)$-RDS. 
\end{thm}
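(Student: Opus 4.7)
The plan is to apply Theorem~\ref{thm:RDStransfer} to the RDS $R$ and automorphism $\phi$ already constructed. Setting $\tau \colonequals (\phi,1,\beta,0)$ and writing $X_V$ for the $\GF(2)$-span of $B((\beta,0))$, the intended intersection is $1 \times X = (1 \times G) \cap \calG$ with $X = \langle w \rangle \times X_V$, a subgroup of $G$ of index $2$. A direct computation in the semidirect product gives
\[ \tau^2 = (1, 1, \beta + \beta^q, 0) = (1, 1, \tr(\beta), 0), \]
and
\[ \tau^{-1}(1, 1, s, t)\tau = (1, 1, s^q, t^q), \qquad \tau^{-1}(1, w, 0, 0)\tau = (1, w^{-1}, 0, 0), \]
so conjugation by $\tau$ acts on the $V$-coordinates as the coordinatewise Frobenius $F\colon (s, t) \mapsto (s^q, t^q)$. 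Since $\beta \notin \GF(q)$ forces $\tr(\beta) \in \GF(q)^\times$, the hypotheses of Theorem~\ref{thm:RDStransfer} will drop out provided that $X_V$ is $F$-invariant (yielding normality of $1 \times X$ in $\calG$) and contains $(\tr(\beta), 0)$ (forcing $\tau^2 \in 1 \times X$, hence $[\calG : 1 \times X] = 2$ and $|\calG| = |G|$).

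The main obstacle is producing a Frobenius-invariant complement of $\langle (\beta, 0) \rangle$ in $V$ to serve as $X_V$, which amounts to choosing $B((\beta, 0))$ appropriately (the statement implicitly permits this choice). I would realize such a complement as the kernel of a Frobenius-invariant $\GF(2)$-linear functional $L\colon V \to \GF(2)$ with $L(\beta, 0) \ne 0$. By the nondegenerate absolute trace pairing $(a,b) \mapsto \Tr_{\GF(q^2)/\GF(2)}(ab)$, every such $L$ has the form $L(a,b) = \Tr_{\GF(q^2)/\GF(2)}(c_1 a + c_2 b)$, and the identity $\Tr(x) = \Tr(x^q)$ shows Frobenius-invariance is equivalent to $c_1, c_2 \in \GF(q)$. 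By the tower-of-traces formula, $L(\beta,0) = \Tr_{\GF(q)/\GF(2)}(c_1 \tr(\beta))$, which can be made nonzero since $\tr(\beta) \in \GF(q)^\times$ and $\Tr_{\GF(q)/\GF(2)}$ is surjective; take such an $L$ and set $X_V \colonequals \ker L$. Then $X_V^F = X_V$ and, from $L((\beta,0)^F) = L(\beta,0)$, the sum $L(\tr(\beta), 0) = L(\beta,0) + L(\beta^q, 0) = 0$, so $(\tr(\beta), 0) \in X_V$ automatically.

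With this choice of $B((\beta, 0))$, the three conditions of Theorem~\ref{thm:RDStransfer} follow from the displayed identities: (i) holds because $\tau^2 \in 1 \times X$ and $|X| = |G|/2$; (ii) holds because $G$ is abelian and $\tau$ inverts $w$ while $F$-stabilizing $X_V$; (iii) is immediate because $[G : X] = 2$ and $X^\tau = X \cdot (1, \beta, 0) \ne X$. Theorem~\ref{thm:RDStransfer} then produces an RDS in $\calG$ with the claimed parameters $(q^4, q^2, q^4, q^2)$, whose forbidden subgroup is $\{(\psi,g) \in \calG : g \in (1, H_0)\}$. For non-abelian-ness, note that $X_V$ has $\GF(2)$-codimension $1$ in $V$, while $\GF(q)^2$ has $\GF(2)$-codimension $2d \ge 2$, so $X_V$ contains some $(s, t) \notin \GF(q)^2$; since $\tau \cdot (1,1,s,t) = (\phi, 1, \beta + s, t)$ while $(1,1,s,t) \cdot \tau = (\phi, 1, s^q + \beta, t^q)$, these differ and $\calG$ is nonabelian.
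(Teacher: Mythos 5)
Your proof is correct, and it follows the paper's overall strategy: set $X=\langle w\rangle\times X_V$ with $X_V$ the $\GF(2)$-span of $B((\beta,0))$, verify the three hypotheses of Theorem~\ref{thm:RDStransfer} for the index-$2$ subgroup $1\times X$, and finish with a non-commuting pair. Where you genuinely diverge is at the crucial step $X^\phi=X$: the paper takes $B((\beta,0))$ to be an arbitrary completion of $(\beta,0)$ to a basis and argues invariance only for elements of the form $(\gamma,0)$, leaning on the unjustified assertion that $X_V$ contains every $(\alpha,0)$ with $\alpha\in\GF(q)$. That assertion fails for some admissible choices of $B$ (already for $q=2$, the hyperplane $\{(a,b):\Tr_{\GF(4)/\GF(2)}(\beta a)=0\}$ avoids $(\beta,0)$ but omits $(1,0)$ and is not Frobenius-stable), and for such a choice $(1\times G)\cap\calG$ is strictly larger than $1\times X$, so $|\calG|=2|G|$ and $\calG$ cannot carry a $(q^4,q^2,q^4,q^2)$-RDS at all. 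You instead build $X_V$ explicitly as $\ker L$ with $L(a,b)=\Tr_{\GF(q^2)/\GF(2)}(c_1a+c_2b)$, $c_1,c_2\in\GF(q)$, $\Tr_{\GF(q)/\GF(2)}(c_1\tr(\beta))=1$, which makes the Frobenius-invariance of $X_V$ and the containment $(\tr(\beta),0)\in X_V$ immediate. So your route supplies the key lemma (a Frobenius-stable complement of $\langle(\beta,0)\rangle$) that the paper's proof glosses over, at the cost of fixing one particular admissible $B((\beta,0))$ --- which is in fact the right reading of the statement, since it cannot hold for every choice. Your nonabelianness argument (an element of $X_V$ outside $\GF(q)^2$) is a fine alternative to the paper's simpler observation that $(\phi,1,\beta,0)$ and $(1,w,0,0)$ fail to commute because $w\neq w^{-1}$.
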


\begin{proof}
 Let $\calG$ be as in the statement, and let $B = B((\beta,0))$..  Note that, since $\beta \notin \GF(q)$ and $B \cup \{(\beta,0)\}$ is a $\GF(2)$-basis for $V$, the subspace generated by $B$ contains all elements of $V$ of the form $(\alpha, 0)$, where $\alpha \in \GF(q)$.  
 
 First, 
 \[ \calG \cap (1 \times G) = 1 \times X,\]
 where $X = \langle (w, 0,0), (1, s, t) : (s,t) \in B \rangle$.  It is clear that $|X| = |G|/2$, and, since $\tr(\beta) \in \GF(q)$, we have 
 \[ (\phi, 1, \beta,0)^2 = (\phi^2, 1, \beta^q + \beta, 0)= (1, 1, \tr(\beta),0)\in 1 \times X,\] and so $|\calG| = |G|$, $1 \times X$ is normal in $\calG$. 
 Now, assume $(1, \gamma, 0) \notin X$.  Since $\gamma = \gamma^{q^2} = (\gamma^q)^q$ and $(1, \gamma^q + \gamma, 0) \in X$ (since $\gamma^q + \gamma \in \GF(q)$, we have $(1, \gamma^q, 0) \notin X$ as well.  This shows that $X^\phi = X$.  Moreover, since $G = X \cup X(1, \beta,0)$, we have
 \[ X^{(\phi, 1, \beta,0)} = X^\phi(1, \beta,0) = X(1,\beta,0),\]
 and hence the action of $\calG$ on right cosets of $X$ in $G$ is transitive.  Therefore, by Theorem \ref{thm:RDStransfer}, $\calG$ contains an RDS $\calR$ with the same parameters as $R$.
 
 Finally, since
 \[ (\phi, 1, \beta,0)(1, w, 0,0) = (\phi, w, \beta,0)\]
 whereas
 \[ (1, w, 0,0)(\phi, 1, \beta,0) = (\phi, w^{-1}, \beta, 0),\]
 we see that $\calG$ is nonabelian, completing the proof.
\end{proof}

\begin{rem}
 In Theorem \ref{thm:RDS2gp2}, we have $(\phi, 1, \beta, 0) \in \calU$, the forbidden subgroup in $\calG$ with respect to $\calR$ (see the proof of Theorem \ref{thm:RDStransfer}).   In particular, we have
 \[ (\phi, 1, \beta, 0)^2 = (1, 1, \tr(\beta), 0),\]
 which shows that $(\phi, 1, \beta, 0)$ has order $4$; that is, the forbidden subgroup $\calU$ is not elementary abelian.  In fact, if $q > 2$, then, as $\GF(2)$-vector spaces, $\dim(\GF(q^2)) > \dim(\GF(q)) + 1$, so there exists $\gamma \in \GF(q^2) \backslash \GF(q)$ such that $(1, 1, \gamma, 0) \in (1 \times X) \cap \calU$ and $\gamma^q \neq \gamma$. Thus, we have
 \[ (\phi, 1, \beta,0)(1, 1, \gamma,0) = (\phi, 1, \beta + \gamma,0)\]
 whereas 
 \[ (1, 1, \gamma,0)(\phi, 1, \beta,0) = (\phi, 1, \beta + \gamma^q, 0),\]
 so the forbidden subgroup $\calU$ is nonabelian.  
\end{rem} 
 
\begin{rem} 
 Relative difference sets with a nonabelian forbidden subgroup were constructed by Hiramine \cite[Theorem 4.8]{Hiramine_2016}; it is possible that the RDSs constructed in Theorems \ref{thm:RDS2gp1}, \ref{thm:RDS2gp2} could be constructed using \cite[Theorem 4.8]{Hiramine_2016}, but it is not immediately clear to the authors.
\end{rem}

\section{Future Work}
\label{sect:future}

We mention here a number of potential future directions.

\begin{itemize}
 \item[(1)]  It seems likely to the authors that Theorem \ref{thm:transfer} can be applied in settings in which the (P)DS is constructed using hyperplanes and coset representatives, such as those in the Hadamard \cite{Wilson_Xiang_1997}, Davis-Jedwab \cite{Davis_Jedwab_1997}, and Chen \cite{Chen_1997} families.  For example, computation in GAP \cite{GAP4} has shown that there are $(320,88,24)$-DSs in the Davis-Jedwab family in SmallGroup(320, 1142), SmallGroup(320, 1146), SmallGroup(320, 1151), SmallGroup(320, 1463), SmallGroup(320, 1511), and SmallGroup(320,1512), so the authors believe that an infinite family can be obtained in nonabelian groups using Theorem \ref{thm:transfer}.
 \item[(2)]  As noted in Remark \ref{rem:mult}, since PDSs in abelian groups obey Ma's Multiplier Theorem (Lemma \ref{lem:multiplier}). Determine the extent to which such automorphisms can be used in conjunction with Theorem \ref{thm:transfer} to construct new examples.
 \item[(3)] Other than the converse to Dillon's Dihedral Trick (Theorem \ref{thm:DillonDihedralConverse}), the constructions of PDSs in this paper were in groups of prime-power order.   Use Theorem \ref{thm:transfer} to generate PDSs -- such as PDSs with Paley parameters -- in groups that do not have prime-power order. 
 \item[(4)]  Theorem \ref{thm:RDStransfer} was only used in Section \ref{sect:RDS}, although there are a number of RDS constructions (see, for example, \cite{Ma_Schmidt_2000}).  While it is known that RDSs exist in certain nonabelian groups (see \cite{Feng_Xiang_2008, Hiramine_2016}), it would be interesting to know the extent to which Theorem \ref{thm:RDStransfer} can be used to construct new examples.  For example, the authors suggest examining RDSs with parameters $(p^{2p}, p^p, p^{2p}, p^p)$, where $p$ is prime.    
 \item[(5)]  We used Theorem \ref{thm:transfer} to construct several infinite families, but these examples are not necessarily the only infinite families for these (P)DSs that can be constructed this way.  It would be quite interesting to see the extent to which Theorem \ref{thm:transfer} can used, and a classification result in the style of \cite{Feng_Li_2021} for any of these families would be especially interesting.  
 \item[(6)]  While Theorem \ref{thm:transfer} provides a method for constructing new examples, it is not necessarily exhaustive.  In order to guaranteed that all examples can be constructed using Theorem \ref{thm:transfer}, it must be the case that $\Aut(\Cay(G,D)) \cong \Aut(G)_D \ltimes G$; however, this is not always the case.  For example, SmallGroup(100,11) and SmallGroup(100,12) each contain a $(100, 36, 14, 12)$-PDS that give rise to the same SRG (see \cite{Jorgensen_Klin_2003}), whereas the full automorphism group of this Cayley graph is isomorphic to $J_2.2$ and does not contain a normal subgroup of order 100.  It is worth noting, however, that computation in GAP \cite{GAP4} shows that one can obtain SmallGroup(100,11) by applying Theorem \ref{thm:transfer} to SmallGroup(100,12) (and vice versa).  It would be interesting to know of any cases where Theorem \ref{thm:transfer} fails to construct all examples.
 \item[(7)] There has been significant interest in related combinatorial objects such as \textit{external difference families}, \textit{external partial difference families}, and \textit{disjoint partial difference families} in recent years due to applications in information security and cryptography; see, for instance \cite{Huczynska_Johnson_2024}.  Is it possible to prove results analogous to Theorem \ref{thm:transfer} and \ref{thm:RDStransfer} in such settings?
 \item[(8)] In recent work \cite{Applebaum_etal_2023}, the groups of order 256 containing a Hadamard DS were classified.  The next open case among $2$-groups is order 1024.  It would be interesting to see which groups can be guaranteed to contain a Hadamard DS using Theorem \ref{thm:transfer}. 
\end{itemize}

\bibliographystyle{plainurl}
\bibliography{NewTransfer}

\end{document}